\providecommand{\U}[1]{\protect\rule{.1in}{.1in}}
\newtheorem{theorem}{Theorem}
\theoremstyle{plain}
\newtheorem{corollary}[theorem]{Corollary}
\newtheorem{lemma}[theorem]{Lemma}
\newtheorem{proposition}[theorem]{Proposition}
\numberwithin{equation}{section}
\numberwithin{theorem}{section}
\begin{document}
\title[Calculation of joint spectral radii]{Application of topological radicals to calculation of joint spectral radii}
\author{Victor S. Shulman}
\address{Department of Mathematics, Vologda State University, 15 lenina Str., Vologda
160000, Russian Federation}
\email{shulman\_v@yahoo.com}
\author{Yuri V. Turovskii}
\address{Institute of Matehematics and Mechanics, National Academy of Sciences of
Azerbaijan, 9 F. Agaev Str., Baku AZ1141, Azerbaijan}
\email{yuri.turovskii@gmail.com}
\thanks{The support received from INTAS project No 06-1000017-8609 is gratefully
acknowledged by the second author}
\date{}
\subjclass[2000]{Primary 47D03; Secondary 46H05} \keywords{Joint spectral radius, Berger-Wang radius,
Berger-Wang formula, generalized Berger-Wang formula, invariant subspace} \dedicatory{To the memory of our
fathers  Semen Moiseevich Shulman and Vladimir Vasilyevich Turovskii, the Soviet Army officers who fought
against fascism in the World War II}
\begin{abstract}
It is shown that the joint spectral radius $\rho(M)$ of a precompact family
$M$ of operators on a Banach space $X$ is equal to the maximum of two numbers:
the joint spectral radius $\rho_{e}(M)$ of the image of $M$ in the Calkin
algebra and the Berger-Wang radius $r(M)$ defined by the formula
\[
r(M)=\underset{n\rightarrow\infty}{\limsup}\left(  \sup\left\{  \rho(a):a\in
M^{n}\right\}  ^{1/n}\right)  .
\]
Some more general Banach-algebraic results of this kind are also proved. The
proofs are based on the study of special radicals on the class of Banach algebras.

\end{abstract}
\maketitle

\section{Introduction and preliminaries}

In 1960 Rota and Strang \cite{RS} introduced the notion of spectral radius for
sets of operators or, more generally, of elements of a Banach algebra. Namely,
if $M$ is a bounded subset of a Banach algebra $A$, the \textit{joint spectral
radius} $\rho(M)$ is defined by
\begin{equation}
\rho(M)=\underset{n\rightarrow\infty}{\lim}\left\Vert M^{n}\right\Vert
^{1/n}=\inf_{n}\left\Vert M^{n}\right\Vert ^{1/n},\label{nach}%
\end{equation}
where the \textit{norm} of a bounded set is the supremum of the norms of its
elements, and the products of sets are defined elementwise:
\[
M_{1}M_{2}=\{ab:a\in M_{1},b\in M_{2}\}.
\]

The notion turned out to be useful in various branches of mathematics:
wavelets, evolution dynamics, difference equations and the operator theory
itself. In \cite{Sh84} the joint spectral radius was applied to show
\textit{the existence of hyperinvariant subspace for every operator algebra
whose Jacobson radical contains non-zero compact operators.} This stimulates
the interest in convenient ways for the calculation of $\rho(M)$.

The map $M\longmapsto\rho(M)$ has many convenient algebraic and analytic
properties, in particular it is subharmonic \cite[Theorem 3.5]{ShT2000} (for
finite $M$, see also \cite[Theorem 3.8]{T85}). The latter means that if
$M=M(\lambda)$ analytically depends on a complex parameter $\lambda$ under
natural conditions then $\lambda\longmapsto\rho(M(\lambda))$ is a subharmonic
function. The following property \cite[Corollary 2.10]{ShT2000} is quite
important (see also a stronger result in \cite[Proposition 3.5]{T85}).

\begin{lemma}
\label{01} If $\rho(M)=0$ then all elements in the subalgebra generated by $M$
are quasinilpotent.
\end{lemma}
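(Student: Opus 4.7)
The plan is to represent any element of the subalgebra generated by $M$ as a finite linear combination of words in $M$, then expand powers of such an element into a sum of (scaled) words in $M$ whose lengths grow at least linearly, and finally use $\rho(M)=0$ to obtain exponential decay.

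First I would observe that every $a$ in the (non-unital) subalgebra generated by $M$ has the form
\[
a=\sum_{i=1}^{k}c_{i}w_{i},\qquad w_{i}\in M^{\ell_{i}},\ \ell_{i}\ge 1,
\]
for some scalars $c_{i}\in\mathbb{C}$. Set $C=\max_{i}|c_{i}|$ and $L=\max_{i}\ell_{i}$. Expanding an $n$-th power gives
\[
a^{n}=\sum_{(i_{1},\ldots,i_{n})\in\{1,\ldots,k\}^{n}} c_{i_{1}}\cdots c_{i_{n}}\,w_{i_{1}}\cdots w_{i_{n}},
\]
and the key observation is that each product $w_{i_{1}}\cdots w_{i_{n}}$ belongs to $M^{m}$ with $m=\ell_{i_{1}}+\cdots+\ell_{i_{n}}$, so $n\le m\le nL$.

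Next I would exploit $\rho(M)=0$. By (\ref{nach}), for any $\varepsilon\in(0,1)$ there exists $N=N(\varepsilon)$ with $\|M^{m}\|\le \varepsilon^{m}$ for all $m\ge N$. Whenever $n\ge N$ we then have $m\ge n\ge N$ and, since $\varepsilon<1$, $\varepsilon^{m}\le\varepsilon^{n}$, hence $\|w_{i_{1}}\cdots w_{i_{n}}\|\le\varepsilon^{n}$. Summing the $k^{n}$ terms yields
\[
\|a^{n}\|\le k^{n}C^{n}\varepsilon^{n}=(kC\varepsilon)^{n},
\]
so $\rho(a)=\lim_{n\to\infty}\|a^{n}\|^{1/n}\le kC\varepsilon$. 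Letting $\varepsilon\to 0$ gives $\rho(a)=0$, proving that $a$ is quasinilpotent.

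There is no real obstacle here; the argument is essentially a bookkeeping exercise. The only mildly delicate point is that the words $w_{i}$ may have different lengths, so the products $w_{i_{1}}\cdots w_{i_{n}}$ lie in $M^{m}$ for a range of values of $m$ between $n$ and $nL$. The estimate $\varepsilon^{m}\le\varepsilon^{n}$ (valid because $\varepsilon<1$ and $m\ge n$) is what makes the combinatorial count go through uniformly. If one wished to avoid the case analysis, an alternative is to apply the same inequality directly to the finite set $\widetilde M=\{c_{i}w_{i}:1\le i\le k\}$ after first verifying that $\rho(\widetilde M)=0$, which follows from $\|\widetilde M^{n}\|\le C^{n}\|M^{n}\|^{\text{(weighted)}}$ by the same estimate; but the direct expansion above is the shortest route.
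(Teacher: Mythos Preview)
Your argument is correct. The paper does not actually prove this lemma; it simply records it as a citation of \cite[Corollary 2.10]{ShT2000} (and points to \cite[Proposition 3.5]{T85} for a stronger version). So there is no ``paper's own proof'' to compare against, and your self-contained elementary computation fills the gap cleanly: expand $a^{n}$ into $k^{n}$ words, observe each word lies in some $M^{m}$ with $m\ge n$, and use $\|M^{m}\|\le\varepsilon^{m}\le\varepsilon^{n}$ for $\varepsilon<1$ once $n$ is large. This is exactly the standard direct argument one would expect, and it matches the way the lemma is invoked in the paper (for elements of the algebraic, non-closed, linear span of $\mathcal{S}(M)$).

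One minor remark: the closing aside about the alternative route via $\widetilde{M}=\{c_{i}w_{i}\}$ is a bit loosely phrased (the expression ``$\|M^{n}\|^{\text{(weighted)}}$'' is undefined), but since this is offered only as a variant and your main argument is already complete, it does not affect correctness. If you wanted to make that alternative precise, the clean statement is $\widetilde{M}^{\,n}\subset \bigcup_{m=n}^{nL} C^{n}M^{m}$, whence $\|\widetilde{M}^{\,n}\|\le C^{n}\max_{n\le m\le nL}\|M^{m}\|$, and the rest proceeds as before.
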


A very useful formula for the joint spectral radius of a bounded set of matrices was found in 1992 by M. A.
Berger and Y. Wang \cite{BW}. To formulate it we introduce, following to \cite{BW}, another spectral
characteristics of a bounded subset $M$ of a Banach algebra:
\[
r(M)=\underset{n\rightarrow\infty}{\lim\sup}\left(  \sup\left\{  \rho(a):a\in
M^{n}\right\}  \right)  ^{1/n}.
\]
It is clear that
\[
r(M)\leq\rho(M).
\]
It was proved in \cite{BW} that
\begin{equation}
r(M)=\rho(M)\label{bw}%
\end{equation}
for any bounded set of matrices. This equality (the\textit{ Berger-Wang
formula}) was extended in \cite{ShT2000} to arbitrary precompact sets of
compact operators on Banach spaces.

It should be noted that both restrictions of compactness are essential. It is
proved in \cite{Gu} that there are two non-compact operators $a,b$ such that
\[
r(\{a,b\})=0\neq\rho(\{a,b\}).
\]
There are also bounded families of compact operators for which the equality
(\ref{bw}) fails (see for instance \cite{PW}).

The Berger-Wang formula for compact operators was applied in \cite{ShT2000} to
the study of operator semigroups and Lie algebras. As a simplest example of
its application, note that it implies immediately the existence of a
nontrivial closed invariant subspace for a semigroup of \textit{Volterra}
(i.e. compact quasinilpotent) operators, established in \cite{Tur98}.

\begin{quotation}
{\small Indeed, if }$G$ {\small  is a semigroup of Volterra operators then for
each finite subset }$M$ {\small  of }$G${\small , every power }$M^{n}$
{\small  consists of quasinilpotent operators, whence }$r(M)=0${\small . By
(\ref{bw}), }$\rho(M)=0$, {\small  whence the linear span of }$M$ {\small
consists of Volterra operators by Lemma \ref{01}. Thus the linear span of }$G$
{\small  is an algebra of Volterra operators. By the Lomonosov Theorem
\cite{Lom}, it has a nontrivial closed invariant subspace.}
\end{quotation}

Our main aim here is to prove that, for arbitrary precompact set $M$ of
operators on $\mathcal{X}$,%
\begin{equation}
\rho(M)=\max\{r(M),\rho_{e}(M)\},\label{gbwf}%
\end{equation}
where $\rho_{e}(M)$ \ $=\rho(\pi(M))$, the joint spectral radius of the image
of $M$ in the Calkin algebra $\mathcal{B}\left(  \mathcal{X}\right)
/\mathcal{K}\left(  \mathcal{X}\right)  $ under the canonical homomorphism
$\pi$.

We call (\ref{gbwf}) the \textit{generalized Berger-Wang formula}. This
formula not only extends (\ref{bw}) to arbitrary operators, but also gives
many additional possibilities for applications. Note for example that it
implies immediately that (\ref{bw}) \textit{holds for precompact families of
operators of the form }$\lambda1+K$\textit{, where }$\lambda\in\mathbb{C}%
$\textit{ and }$K$\textit{ is a compact operator.}

\begin{quotation}
{\small  Indeed, in this case }$\pi(M)$ {\small  consists of scalar multiples
of the unit in the Calkin algebra }$\mathcal{B}\left(  \mathcal{X}\right)
/\mathcal{K}\left(  \mathcal{X}\right)  ${\small , whence }%
\[
\rho_{e}(M)=r({\pi}(M))\leq r(M),
\]
{\small and (\ref{gbwf}) shows that}%
\[
\rho(M)\leq r(M).
\]

\end{quotation}

In its turn the equality (\ref{bw}) for `scalar plus compact' operators was a
main ingredient of the proof (in \cite{ShT2000}) that \textit{any Lie algebra
of compact quasinilpotent operators has a nontrivial closed invariant
subspace.}

\begin{quotation}
{\small Let us recall the proof of this result. Let }$L$ {\small be a Lie
subalgebra in }$\mathcal{B}(\mathcal{X})${\small , that is a subspace of
}$\mathcal{B}(\mathcal{X})$ {\small such that }%
\[
TS-ST\in L
\]
{\small for all }$T,S\in L${\small . If }$L$ {\small consists of
quasinilpotent operators then }%
\[
G=\{e^{T}:T\in L\}
\]
{\small is a group (Wojty\'{n}ski's version of the Baker-Campbell-Hausdorff
Theorem \cite{WW}), and all operators in }$G$ {\small have spectrum }%
$\{1\}${\small . It follows that }%
\[
r(M)=1
\]
{\small for each finite set }$M\subset G${\small . If }$L$ {\small consists of
compact operators then }$G$ {\small consists of operators of the form }$1+K$
{\small with compact }$K${\small . By the above, }%
\[
\rho(M)=1,
\]
{\small for each finite subset\ }$M\subset G${\small . Choose }$T\in M$
{\small and define a function }$f(z)$ {\small on }$\mathbb{C}$ {\small by }%
\[
f(z)=\rho(M(e^{zT}-1)/z).
\]
{\small It is subharmonic and tends to zero on infinity because }%
\[
\rho(M(e^{zT}-1))\leq2
\]
{\small (use the fact that the joint spectral radius of a bounded set is not
changed if pass to closed convex hull of this set). Hence we have}%
\[
f(z)=0,
\]
{\small for every }$z\in\mathbb{C}${\small . In particular, }$f(0)=0$
{\small and }%
\[
\rho(TM)=0.
\]
{\small Now, by Lemma \ref{01}, }%
\[
\rho(TS)=0
\]
{\small for any }$S\in A${\small , where }$A$ {\small is the linear span of
}$G${\small . Since }$A$ {\small is an algebra, it has an invariant subspace
by Lomonosov's Lemma. But it is straightforward that an invariant subspace for
}$G$ {\small is invariant for }$L${\small .}
\end{quotation}

Note that the definition of $\rho_{e}(M)$ can be rewritten as follows
\[
\rho_{e}(M)=\underset{n\rightarrow\infty}{\lim\sup}\left(  \sup\left\{
\left\Vert T\right\Vert _{e}:T\in M^{n}\right\}  \right)  ^{1/n},
\]
where $||T||_{e}=||\pi(T)||$ is the essential norm of $T$ ($T\longmapsto
||T||_{e}$ is a seminorm on $\mathcal{B}\left(  \mathcal{X}\right)  $). It is
somewhat more convenient is to use%
\[
\rho_{\chi}(M)=\underset{n\rightarrow\infty}{\lim\sup}\left(  \sup\left\{
\left\Vert T\right\Vert _{\chi}:T\in M^{n}\right\}  \right)  ^{1/n}.
\]
Here $\left\Vert T\right\Vert _{\chi\text{ }}$ is the Hausdorff measure of
non-compactness for $T\mathcal{X}_{1}$, where
\[
\mathcal{X}_{1}=\left\{  x\in\mathcal{X}:\left\Vert x\right\Vert
\leq1\right\}  .
\]
In other words, $\left\Vert T\right\Vert _{\chi\text{ }}$is the infimum of all
such $\varepsilon>0$ that $T\mathcal{X}_{1}$ can be covered by a finite number
of balls of radius $\varepsilon$. The advantage of the work with this seminorm
is that
\[
\left\Vert T|\mathcal{Y}\right\Vert _{\chi}\leq\left\Vert T\right\Vert
_{\chi\text{ }}%
\]
and
\begin{equation}
\left\Vert T|\left(  \mathcal{X}/\mathcal{Y}\right)  \right\Vert _{\chi}%
\leq\left\Vert T\right\Vert _{\chi},\label{restQuot}%
\end{equation}
where $T|\mathcal{Y}$ is the restriction of $T$ to an invariant subspace
$\mathcal{Y}$ and $\ T|\left(  \mathcal{X}/\mathcal{Y}\right)  $ is the
operator induced by $T$ on the quotient space $\mathcal{X}/\mathcal{Y}$.

So we will prove that%
\begin{equation}
\rho\left(  M\right)  =\max\left\{  r\left(  M\right)  ,\rho_{\chi}\left(
M\right)  \right\}  \label{ggbwf}%
\end{equation}
for any precompact set $M$ of operators. Since
\[
\rho_{\chi}(M)\leq\rho_{e}(M)\leq\rho(M),
\]
this is a more strong result than (\ref{gbwf}).

The formula (\ref{ggbwf}) was proved in \cite{ShT2002} for operators on
reflexive spaces and, more generally, for weakly compact operators (see a
stronger result in \cite[Theorem 4.4]{ShT2002}). In the present work we are
able to remove these restrictions by applying the theory of  topological
radicals initiated by P. G. Dixon \cite{Dix} and further developed in
\cite{rad1}. We heavily use the properties of the radical $\mathcal{R}%
_{\mathrm{cq}}$ defined in \cite{rad1} in terms of the joint spectral radius,
and introduce a new topological radical, $\mathcal{R}_{\mathrm{hc}}$, related
to the compactness conditions.

We combine Banach algebraic and operator theoretic approaches. The first one
makes the subject more flexible and allows us to approximate to a needed
result step by step, `removing' more and more large ideals and passing to the
quotients (this process is simplified by means of the theory of radicals). The
second one enjoys the possibility to pass to the restrictions of operators to
invariant subspaces and to quotient spaces (which is especially important for
calculation of spectral radii). Our main results also have both Banach
algebraic and operator theoretic nature. We prove first a formula which
expresses the joint spectral radius of a family of elements of a Banach
algebra via the Hausdorff radius of a related family of multiplication
operators (we call this formula the \textit{mixed} GBWF). It is used to deduce
(\ref{ggbwf}) (the \textit{operator} GBWF). Then using (\ref{ggbwf}) we obtain
an extension of (\ref{gbwf}) to general Banach algebras (the \textit{Banach
algebraic} GBWF). The latter formula is of the same form as (\ref{gbwf}), but
the ideal $\mathcal{K}(\mathcal{X})$ is changed by the hypocompact radical
$\mathcal{R}_{\mathrm{hc}}(A)$ of a Banach algebra $A$. It should be noted
that, for $A=\mathcal{B}(\mathcal{X})$,
\[
\mathcal{R}_{\mathrm{hc}}(A)\supset\mathcal{K}(\mathcal{X})
\]
and the inclusion is proper for some Banach spaces. Hence the Banach-algebraic
formula not only extends (\ref{gbwf}) but also strengthens it.

We denote by $A^{1}$ the Banach algebra obtained by adjoining a unit to $A$
(if $A$ is unital then we put $A^{1}=A$). The term \textit{ ideal} always
means a closed two-sided ideal, and the term \textit{operator }always means a
bounded linear operator. If $J$ is an ideal of $A$ then by $q_{J}$ we denote
the quotient map from $A$ to $A/J$. The image of a set $M\subset A$ under
$q_{J}$ is denoted by either $q_{J}(M)$ or, simply, $M/J$. All spaces are
assumed to be over the field $\mathbb{C}$. If $M$ is a subset in a Banach
space then $\operatorname*{span}M$ denotes its closed linear span.

\section{Auxiliary results}

A natural way from the Banach algebraic setting to the operator one is to use
multiplication operators. As usual, for an element $a$ of a Banach algebra
$A$, we denote by $\mathrm{L}_{a}$ and $\mathrm{R}_{a}$ the operators of the
left and right multiplications by $a$ on $A$ defined by
\[
\mathrm{L}_{a}x=ax,\quad\mathrm{R}_{a}x=xa.
\]
Furthermore, for $M,N\subset A$, let
\[
\mathrm{L}_{M}=\{\mathrm{L}_{a}:a\in M\}
\]
and, similarly,
\[
\mathrm{R}_{N}=\{\mathrm{R}_{a}:a\in N\}.
\]
Multiplying these sets of operators, we define $\mathrm{L}_{M}\mathrm{R}_{N}$
by
\[
\mathrm{L}_{M}\mathrm{R}_{N}=\{\mathrm{L}_{a}\mathrm{R}_{b}:a\in M,b\in N\}.
\]

The joint spectral properties of $M$ are reflected in the properties of the
family $\mathrm{L}_{M}\mathrm{R}_{M}$.

\begin{lemma}
\label{pass}Let $M$ be a bounded set $M$ in a Banach algebra $A$. Then

\begin{itemize}
\item[(i)] $\rho(\mathrm{L}_{M}\mathrm{R}_{M})=\rho(M)^{2}$.

\item[(ii)] $r(\mathrm{L}_{M}\mathrm{R}_{M})=r(M)^{2}$.
\end{itemize}
\end{lemma}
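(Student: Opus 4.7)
The key observation is that left and right multiplications commute, so
\[
(\mathrm{L}_{a_{1}}\mathrm{R}_{b_{1}})\cdots(\mathrm{L}_{a_{n}}\mathrm{R}_{b_{n}})=\mathrm{L}_{a_{1}\cdots a_{n}}\mathrm{R}_{b_{n}\cdots b_{1}},
\]
and consequently $(\mathrm{L}_{M}\mathrm{R}_{M})^{n}=\mathrm{L}_{M^{n}}\mathrm{R}_{M^{n}}$. Both parts therefore reduce to estimating $\|\mathrm{L}_{c}\mathrm{R}_{d}\|$ and $\rho(\mathrm{L}_{c}\mathrm{R}_{d})$ for $c,d\in M^{n}$.

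The upper bounds are routine. From $\|\mathrm{L}_{c}\mathrm{R}_{d}\|\leq\|c\|\|d\|$ I get $\|(\mathrm{L}_{M}\mathrm{R}_{M})^{n}\|\leq\|M^{n}\|^{2}$, so $\rho(\mathrm{L}_{M}\mathrm{R}_{M})\leq\rho(M)^{2}$ in (i). For (ii), the commutation also gives $(\mathrm{L}_{c}\mathrm{R}_{d})^{k}=\mathrm{L}_{c^{k}}\mathrm{R}_{d^{k}}$, whence $\rho(\mathrm{L}_{c}\mathrm{R}_{d})\leq\rho(c)\rho(d)$ and then $r(\mathrm{L}_{M}\mathrm{R}_{M})\leq r(M)^{2}$.

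For the lower bounds my plan is to work first in the unitization $A^{1}$ and only afterwards transfer back to $A$. Evaluating at the adjoined unit yields $\|\mathrm{L}_{c}\mathrm{R}_{d}|_{A^{1}}\|\geq\|cd\|$, so $\|(\mathrm{L}_{M}\mathrm{R}_{M})^{n}|_{A^{1}}\|\geq\|M^{2n}\|$, and hence $\rho(\mathrm{L}_{M}\mathrm{R}_{M}|_{A^{1}})\geq\rho(M)^{2}$. Likewise, taking $c=d=a\in M^{n}$, the operator $(\mathrm{L}_{a}\mathrm{R}_{a})^{k}$ sends the unit of $A^{1}$ to $a^{2k}$, giving $\rho(\mathrm{L}_{a}\mathrm{R}_{a}|_{A^{1}})\geq\rho(a)^{2}$, and summing over $a\in M^{n}$ yields $r(\mathrm{L}_{M}\mathrm{R}_{M}|_{A^{1}})\geq r(M)^{2}$.

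The remaining step --- transferring these lower bounds back to $A$ --- is the subtle part. Every operator in $\mathrm{L}_{M}\mathrm{R}_{M}$ maps $A^{1}$ into $A$ after one application, so writing $N=\mathrm{L}_{M}\mathrm{R}_{M}$ we get $\|N^{n+1}|_{A^{1}}\|\leq\|N|_{A^{1}}\|\cdot\|N^{n}|_{A}\|$; the $n$-th root forces $\rho(N|_{A^{1}})=\rho(N|_{A})$, and the analogous argument for the single operator $\mathrm{L}_{a}\mathrm{R}_{a}$ identifies its spectral radius on $A^{1}$ with that on $A$. The principal obstacle is precisely that in a non-unital Banach algebra $\|\mathrm{L}_{c}\mathrm{R}_{d}|_{A}\|$ may be strictly smaller than $\|cd\|$, so no direct lower bound on the norms is available on $A$ itself; the unitization device, coupled with the observation that $N$ pushes $A^{1}$ into $A$ after one step, is what makes the transfer legitimate.
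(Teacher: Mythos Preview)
Your argument is correct. The upper bounds match the paper exactly, but for the lower bounds the paper takes a shorter, unitization-free route: instead of evaluating at an adjoined unit, it applies the multiplication operators to elements already in the algebra. For (i) it observes that $\mathrm{L}_{M}\mathrm{R}_{M}(M)=M^{3}$, so $\|M^{3}\|\leq\|\mathrm{L}_{M}\mathrm{R}_{M}\|\,\|M\|$; replacing $M$ by $M^{n}$ and passing to the limit gives $\rho(M)^{3}\leq\rho(\mathrm{L}_{M}\mathrm{R}_{M})\rho(M)$ directly in $A$. For (ii) it uses $(\mathrm{L}_{a}\mathrm{R}_{a})^{n}(a)=a^{2n+1}$, so $\rho(a)^{2}=\lim_{n}\|a^{2n+1}\|^{1/n}\leq\rho(\mathrm{L}_{a}\mathrm{R}_{a})$, again entirely inside $A$. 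Your unitization-plus-transfer device is a legitimate alternative, and the transfer step (using that $\mathrm{L}_{M}\mathrm{R}_{M}$ sends $A^{1}$ into $A$ after one application) is handled correctly; the paper's trick of applying the operators to $M$ or to $a$ itself simply avoids this overhead and keeps the argument self-contained in the original algebra.
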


\begin{proof}
(i) Note first of all that
\begin{equation}
\rho(M^{k})=\rho(M)^{k}\label{pow1}%
\end{equation}
for every bounded subset $M$ of $A$ and integer $k$. This follows from the
facts that
\[
M^{km}=\left(  M^{k}\right)  ^{m}%
\]
and that in (\ref{nach}) one can pass to a subsequence under $n=mk$.

It is clear that
\[
\Vert\mathrm{L}_{M}\mathrm{R}_{N}\Vert\leq\Vert\mathrm{L}_{M}\Vert
\Vert\mathrm{R}_{N}\Vert\leq\Vert M\Vert\Vert N\Vert
\]
for bounded subsets $M$ and $N$ of $A$. Also the formula
\[
(\mathrm{L}_{M}\mathrm{R}_{N})^{n}=\mathrm{L}_{M^{n}}\mathrm{R}_{N^{n}}%
\]
is evident. It follows from this that
\[
\rho(\mathrm{L}_{M}\mathrm{R}_{M})\leq\rho(M)^{2}.
\]

To show the converse, we note that
\[
\Vert M^{3}\Vert=\Vert\mathrm{L}_{M}\mathrm{R}_{M}(M)\Vert\leq\Vert
\mathrm{L}_{M}\mathrm{R}_{M}\Vert\Vert M\Vert.
\]
Changing $M$ by $M^{n}$, taking $n$-roots and limits, one obtains that%
\[
\rho(M)^{3}=\rho(M^{3})\leq\rho(\mathrm{L}_{M}\mathrm{R}_{M})\rho(M),
\]
whence
\[
\rho(M)^{2}\leq\rho(\mathrm{L}_{M}\mathrm{R}_{M}).
\]

(ii) Since $\mathrm{L}_{M}$ commutes with $\mathrm{R}_{M}$, we have that
\[
\rho(\mathrm{L}_{a}\mathrm{R}_{b})\leq\rho(\mathrm{L}_{a})\rho(\mathrm{R}%
_{b})\leq\rho(a)\rho(b)
\]
for every $a,b\in M^{n}$. This shows that
\[
r(\mathrm{L}_{M}\mathrm{R}_{M})\leq r(M)^{2}.
\]

On the other hand, for any $a\in M^{k}$, we obtain that
\begin{align*}
\rho(a)^{2} &  =\lim_{n\rightarrow\infty}\Vert a^{2n+1}\Vert^{1/n}%
=\lim_{n\rightarrow\infty}\Vert(\mathrm{L}_{a}\mathrm{R}_{a})^{n}%
(a)\Vert^{1/n}\leq\rho(\mathrm{L}_{a}\mathrm{R}_{a})\\
&  \leq\sup_{x,y\in M^{k}}\rho(\mathrm{L}_{x}\mathrm{R}_{y}).
\end{align*}
Taking supremum over all choices of $a_{i}$, one gets that
\[
\sup_{a\in M^{k}}\rho(a)^{2}\leq\sup_{x,y\in M^{k}}\rho(\mathrm{L}%
_{x}\mathrm{R}_{y}).
\]
It remains to take $k$-roots and pass to the upper limit to obtain that
\[
r(M)^{2}\leq r(\mathrm{L}_{M}\mathrm{R}_{M}).
\]

\end{proof}

The following result was proved in \cite[Corollary 6.5]{ShT2000}.

\begin{lemma}
\label{ineq} $\Vert\mathrm{L}_{M}\mathrm{R}_{M}\Vert_{\chi}\leq16\Vert
M\Vert_{\chi}\Vert M\Vert$ for each precompact set of operators.
\end{lemma}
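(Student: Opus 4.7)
The plan is to bound $\chi(\{aTb:T\in A_1\})$ in the operator norm of $\mathcal{B}(\mathcal{X})$ uniformly over $a,b\in M$; since $\|a\|_\chi,\|b\|_\chi\le\|M\|_\chi$ and $\|a\|,\|b\|\le\|M\|$, it will suffice to establish a per-pair bound of the form $\|\mathrm{L}_a\mathrm{R}_b\|_\chi\le C(\|a\|_\chi\|b\|+\|a\|\|b\|_\chi)$ with an absolute constant $C$. The strategy will be to build an explicit finite operator-norm cover of $\{aTb:T\in A_1\}$ out of finite covers (in $\mathcal{X}$) of $b\mathcal{X}_1$ and of the auxiliary sets $\{aTw_j:T\in A_1\}$, glued by a combinatorial partition of $A_1$.

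First I would fix $\delta>\|a\|_\chi$ and $\varepsilon>\|b\|_\chi$ and choose a finite cover $b\mathcal{X}_1\subset\bigcup_{j=1}^m B(w_j,\varepsilon)$; for each $x\in\mathcal{X}_1$ pick an index $j(x)$ with $\|bx-w_{j(x)}\|<\varepsilon$. The decomposition
\[
aTbx=aTw_{j(x)}+aT(bx-w_{j(x)}),\qquad \|aT(bx-w_{j(x)})\|\le\|a\|\varepsilon,
\]
reduces operator-norm control of $aTb$, modulo an error $\|a\|\varepsilon$, to control of the finite collection $\{aTw_j\}_{j=1}^m$ of vectors in $\mathcal{X}$.

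Next, for each $j$, the set $\{aTw_j:T\in A_1\}$ lies inside $(\|b\|+\varepsilon)\cdot a\mathcal{X}_1$ (using $\|w_j\|\le\|b\|+\varepsilon$) and so has Hausdorff measure at most $(\|b\|+\varepsilon)\delta$; cover it by finitely many balls $B(z_{j,k},(\|b\|+\varepsilon)\delta)$ in $\mathcal{X}$. I then classify each $T\in A_1$ by the tuple $(k(T,1),\ldots,k(T,m))$ chosen so that $aTw_j\in B(z_{j,k(T,j)},(\|b\|+\varepsilon)\delta)$; this partitions $A_1$ into finitely many classes. For $T_1,T_2$ in a common class and $x\in\mathcal{X}_1$, the triangle inequality yields
\[
\|aT_1bx-aT_2bx\|\le 2\|a\|\varepsilon+2(\|b\|+\varepsilon)\delta,
\]
so picking one representative per class exhibits $\{aTb:T\in A_1\}$ as a finite union of operator-norm balls of that radius. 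Letting $\varepsilon\downarrow\|b\|_\chi$, $\delta\downarrow\|a\|_\chi$ and then taking $\sup_{a,b\in M}$ will give a bound of the form $C\|M\|_\chi\|M\|$ with $C$ comfortably smaller than $16$.

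The main obstacle is the passage from pointwise control of $aTbx$ (for each $x\in\mathcal{X}_1$) to uniform operator-norm control of $aT_1b-aT_2b$. The combinatorial partition of $A_1$ is the device that achieves this, by grouping operators that act similarly on the finite net $\{w_j\}\subset b\mathcal{X}_1$. This importantly sidesteps any need for norm-approximation of $a$ or $b$ themselves by finite-rank operators, which would be problematic in Banach spaces lacking the approximation property.
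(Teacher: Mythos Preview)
Your argument is correct. The paper does not actually prove this lemma here; it merely cites \cite[Corollary~6.5]{ShT2000}, so there is no in-paper proof to compare against line by line. Your direct covering argument is self-contained and in fact yields a sharper constant: from
\[
\|aT_1bx-aT_2bx\|\le 2\|a\|\varepsilon+2(\|b\|+\varepsilon)\delta
\]
one gets, after the limits and the supremum over $a,b\in M$, the bound $\|\mathrm{L}_M\mathrm{R}_M\|_\chi\le 6\,\|M\|_\chi\|M\|$ (using $\|M\|_\chi\le\|M\|$), comfortably inside the stated $16$. Two incidental remarks: (i) you may take the net points $w_j$ inside $b\mathcal{X}_1$ itself, so that $\|w_j\|\le\|b\|$ and the bound tightens slightly; (ii) your argument never uses precompactness of $M$, only boundedness, so the inequality holds for arbitrary bounded $M\subset\mathcal{B}(\mathcal{X})$ as well. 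The combinatorial partition of $A_1$ by the tuple $(k(T,1),\ldots,k(T,m))$ is exactly the right device to pass from pointwise to operator-norm control, and the finiteness of the partition is clear since each coordinate ranges over a finite index set.
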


Let us define $\rho^{\chi}(M)$ by
\[
\rho^{\chi}(M)=\rho_{\chi}(\mathrm{L}_{M}\mathrm{R}_{M})^{1/2},
\]
for a bounded set $M$ of elements of a Banach algebra.

It is easy to check, using (\ref{restQuot}), that
\[
\rho^{\chi}(M/J)\leq\rho^{\chi}(M),
\]
for any closed ideal $J$ (see \cite{ShT2002}).

An element $a\in A$ is called \textit{compact} if the operator
\[
\mathrm{W}_{a}=\mathrm{L}_{a}\mathrm{R}_{a}%
\]
is compact on $A$. More generally, a set $M\subset A$ \textit{consists of
mutually compact elements} if all operators in $\mathrm{L}_{M}\mathrm{R}_{M}$
are compact. We will need the following extension of the main result of
\cite{Tur98}.

\begin{lemma}
\label{semig} If $G$ is a semigroup of quasinilpotent mutually compact
elements of a Banach algebra $A$ then $\operatorname*{span}G$ consists of
quasinilpotent elements.
\end{lemma}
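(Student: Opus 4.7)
The plan is to lift the problem from $A$ to the operator algebra $\mathcal{B}(A)$, where the Berger--Wang formula for compact operators from \cite{ShT2000} applies directly, via the multiplication operators $\mathrm{L}_{x}\mathrm{R}_{y}$. Fix $a \in \operatorname*{span} G$ and write $a = \sum_{i=1}^{N} c_{i} g_{i}$ with $c_{i} \in \mathbb{C}$ and $g_{i} \in G$. Consider the finite set
\[
M' = \{\mathrm{L}_{g_{i}}\mathrm{R}_{g_{j}} : 1 \le i, j \le N\}
\]
of operators on $A$; each element of $M'$ is compact by the mutual-compactness hypothesis on $G$.

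The central computation is that every element of every power $(M')^{n}$ is quasinilpotent. Using the identity $(\mathrm{L}_{x}\mathrm{R}_{y})(\mathrm{L}_{z}\mathrm{R}_{w}) = \mathrm{L}_{xz}\mathrm{R}_{wy}$ and the fact that $G$ is a semigroup, each element of $(M')^{n}$ has the form $\mathrm{L}_{g}\mathrm{R}_{h}$ with $g, h \in G$. Since $\mathrm{L}_{g}$ commutes with $\mathrm{R}_{h}$, and both $\rho(\mathrm{L}_{g}) \le \rho(g) = 0$ and $\rho(\mathrm{R}_{h}) \le \rho(h) = 0$, any such operator is quasinilpotent. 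Hence $r(M') = 0$, and since $M'$ is a finite (in particular precompact) family of compact operators on the Banach space $A$, the Berger--Wang formula of \cite{ShT2000} yields $\rho(M') = r(M') = 0$.

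Lemma \ref{01} then forces the subalgebra of $\mathcal{B}(A)$ generated by $M'$ to consist of quasinilpotent operators. In particular $\mathrm{L}_{a}\mathrm{R}_{a} = \sum_{i,j} c_{i} c_{j} \mathrm{L}_{g_{i}}\mathrm{R}_{g_{j}}$ lies in this subalgebra, so $\rho(\mathrm{L}_{a}\mathrm{R}_{a}) = 0$. Lemma \ref{pass}(i) applied to the singleton $\{a\}$ gives $\rho(a)^{2} = \rho(\mathrm{L}_{a}\mathrm{R}_{a}) = 0$, proving that $a$ is quasinilpotent. The most delicate point of the plan is the order reversal $(\mathrm{L}_{x}\mathrm{R}_{y})(\mathrm{L}_{z}\mathrm{R}_{w}) = \mathrm{L}_{xz}\mathrm{R}_{wy}$: it is precisely this that keeps $(M')^{n}$ inside $\mathrm{L}_{G}\mathrm{R}_{G}$, allowing compactness and quasinilpotence to be inherited from the original semigroup $G$ and making the Berger--Wang machinery applicable.
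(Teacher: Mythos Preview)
Your argument is correct and follows essentially the same route as the paper: both lift to $\mathcal{B}(A)$ and observe that $\mathrm{L}_G\mathrm{R}_G$ is a semigroup of compact quasinilpotent operators, then conclude that its (closed) linear span---hence $\mathrm{L}_a\mathrm{R}_a$ for $a$ in the span of $G$---consists of quasinilpotents. The paper cites \cite{Tur98} directly for that last step, whereas you unpack it via the Berger--Wang formula for compact operators plus Lemma~\ref{01}, which is precisely the derivation of the \cite{Tur98} conclusion sketched in the paper's introduction. One small point: in this paper $\operatorname{span}G$ denotes the \emph{closed} linear span, so not every $a\in\operatorname{span}G$ is a finite sum $\sum c_i g_i$; your argument handles the dense subspace of finite combinations, and upper semicontinuity of the spectral radius extends quasinilpotence to the closure.
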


\begin{proof}
Note that $\mathrm{L}_{G}\mathrm{R}_{G}$ is a semigroup of compact
quasinilpotent operators on the Banach space $A$. By \cite{Tur98},
$\operatorname*{span}\mathrm{L}_{G}\mathrm{R}_{G}$ consists of quasinilpotent
elements. Note that $\mathrm{L}_{b}\mathrm{R}_{c}\in\operatorname*{span}%
\mathrm{L}_{G}\mathrm{R}_{G}$ for every $b,c\in\operatorname*{span}G$. Hence
$\mathrm{L}_{\operatorname*{span}G}\mathrm{R}_{\operatorname*{span}G}$
consists of quasinilpotents and the same is true for $\operatorname*{span}G$.
\end{proof}

\section{Hereditary topological radicals}

We deal here with a class of topological radicals that have especially
convenient properties. A \textit{hereditary topological radical} on the class
of all Banach algebras is a map $P$ which assigns to each Banach algebra $A$
its ideal $P(A)$ and satisfies the following conditions:

\begin{itemize}
\item[$\left(  \mathrm{R1}\right)  $] $P(A/P(A))=0$.

\item[$\left(  \mathrm{R2}\right)  $] $P(J)=J\cap P(A)$ for any ideal $J$ of
$A$.

\item[$\left(  \mathrm{R3}\right)  $] $f(P(A))\subset P(B)$ for continuous
surjective homomorphism $f:A\rightarrow B$.
\end{itemize}

It follows immediately from (R2) that
\[
P(P(A))=P(A).
\]
An algebra is called $P$\textit{-radical} if
\[
A=P(A).
\]
It can be proved (see \cite{rad1}) that ideals and quotients of $P$-radical
algebras are $P$-radical and that the class of all $P$-radical algebras is
stable with respect to extensions (if $J$ and $A/J$ are $P$-radical then $A$
is $P$-radical). We will need a more general result, also proved in
\cite{rad1}.

Let us call a transfinite increasing sequence $(J_{\alpha})_{\alpha\leq \gamma}$ of ideals in a Banach algebra
$A$ \textit{a transfinite increasing chain of ideals} if $J_{\beta}=\overline{\cup_{\alpha<\beta}J_{\alpha}}$
for any limit ordinal $\beta\leq\gamma$.

\begin{lemma}
\label{transext} Let $P$ be a hereditary topological radical. If in a
transfinite increasing chain of ideals $(J_{\alpha})_{\alpha\leq\gamma}$ of a
Banach algebra $A$ the first element $J_{1}$ and all quotients $J_{\alpha
+1}/J_{\alpha}$ are $P$-radical then $J_{\gamma}$ is $P$-radical.
\end{lemma}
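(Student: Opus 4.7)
The plan is to proceed by transfinite induction on $\gamma$, distinguishing three cases: $\gamma=1$ (base), $\gamma$ a successor, and $\gamma$ a limit ordinal. The base case is just the hypothesis that $J_1$ is $P$-radical. For the successor and limit steps I will leverage the two features of $P$ that the authors have highlighted just before the statement: stability under extensions (if $J$ and $A/J$ are $P$-radical, then so is $A$) and the heredity axiom (R2).

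For the successor step, assume $\gamma=\alpha+1$ and that $J_\alpha$ is $P$-radical by induction. By hypothesis $J_{\alpha+1}/J_\alpha$ is also $P$-radical. Since $J_\alpha$ is a closed two-sided ideal of $A$ contained in $J_{\alpha+1}$, it is also a closed two-sided ideal of $J_{\alpha+1}$, so the quoted extension stability applied inside $J_{\alpha+1}$ immediately yields that $J_{\alpha+1}$ is $P$-radical.

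The delicate case, and the one I expect to be the main obstacle, is a limit ordinal $\gamma$, where the axiomatic extension stability is no longer directly available. Here the induction hypothesis gives that every $J_\alpha$ with $\alpha<\gamma$ is $P$-radical, and I must combine these infinitely many pieces of information into $P$-radicality of the closed union $J_\gamma=\overline{\bigcup_{\alpha<\gamma}J_\alpha}$. The idea is to work inside $J_\gamma$ and to apply axiom (R2) to each $J_\alpha$, which is a closed two-sided ideal of $J_\gamma$:
\[
P(J_\alpha)=J_\alpha\cap P(J_\gamma).
\]
Since $J_\alpha$ is $P$-radical, $P(J_\alpha)=J_\alpha$, and therefore $J_\alpha\subset P(J_\gamma)$ for every $\alpha<\gamma$. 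Because $P(J_\gamma)$ is closed in $J_\gamma$, it contains
\[
\overline{\bigcup_{\alpha<\gamma}J_\alpha}=J_\gamma,
\]
so $P(J_\gamma)=J_\gamma$, i.e.\ $J_\gamma$ is $P$-radical. The key technical point is the appeal to (R2) which converts the \emph{equality} $P(J_\alpha)=J_\alpha$ into the \emph{inclusion} $J_\alpha\subset P(J_\gamma)$; once one has this, closedness of the radical ideal does the rest and the transfinite induction terminates.
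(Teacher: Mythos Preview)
The paper does not actually give a proof of this lemma; it is stated with a citation to \cite{rad1} and used as a black box. Your argument is correct and is the expected one: transfinite induction, with the successor case handled by the extension stability of $P$-radical algebras (applied inside $J_{\alpha+1}$ with ideal $J_\alpha$), and the limit case handled by combining axiom (R2), which gives $J_\alpha=P(J_\alpha)=J_\alpha\cap P(J_\gamma)\subset P(J_\gamma)$, with the closedness of the ideal $P(J_\gamma)$.
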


The most popular example of a hereditary topological radical is the Jacobson
radical $\operatorname*{Rad}$. We need some other examples.

\subsection{The radical $\mathcal{R}_{cq}$}

All definitions and results of this subsection are from \cite{rad1}.

The topological radical we are going to treat now is related to the joint
spectral radius. Let us call a Banach algebra $A$ \textit{compactly
quasinilpotent} if ${\rho}(M)=0$ for any precompact subset $M$ of $A$.

\begin{theorem}
In any Banach algebra $A$ there is a largest compactly quasinilpotent ideal
$\mathcal{R}_{cq}(A)$. The map $A\longmapsto\mathcal{R}_{cq}(A)$ is a
hereditary topological radical.
\end{theorem}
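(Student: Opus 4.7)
The plan is to identify a single key property of the class of compactly quasinilpotent (cq) Banach algebras — an \emph{extension} property — and then invoke the abstract formalism of topological radicals to obtain $\mathcal{R}_{cq}$ and verify its axioms. The key lemma is the following: if $J$ is a cq ideal of $A$ and $A/J$ is cq, then $A$ is itself cq. Together with two routine facts — (a) ideals of cq algebras are cq, and (b) continuous surjective images of cq algebras are cq — this suffices.

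Granted the extension lemma, I would proceed as follows. Define $\mathcal{R}_{cq}(A)$ to be the closure of the union of all cq ideals of $A$. Applying extension to
\[
0 \longrightarrow J_{1} \longrightarrow J_{1}+J_{2} \longrightarrow (J_{1}+J_{2})/J_{1} \longrightarrow 0,
\]
together with (b) — which identifies $(J_{1}+J_{2})/J_{1}\cong J_{2}/(J_{1}\cap J_{2})$ as a continuous surjective image of the cq algebra $J_{2}$ — shows that the sum of two cq ideals is cq. A transfinite induction modelled on Lemma~\ref{transext} then extends this to directed unions, so $\mathcal{R}_{cq}(A)$ is itself a cq ideal, automatically the largest. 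Axiom (R1) follows from maximality: any cq ideal of $A/\mathcal{R}_{cq}(A)$ pulls back to a cq extension of $\mathcal{R}_{cq}(A)$, hence already lies in $\mathcal{R}_{cq}(A)$. Axiom (R3) is immediate from (b). For (R2) one argues that $\mathcal{R}_{cq}(J)$ is already an ideal of $A$: for precompact $M\subset\mathcal{R}_{cq}(J)$ and $a\in A$, both $aM$ and $Ma$ are precompact in $J$, and the $A$-ideal they generate remains cq by extension; thus $\mathcal{R}_{cq}(J)\subset J\cap\mathcal{R}_{cq}(A)$, and the reverse inclusion is (a).

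The main obstacle is therefore the extension lemma itself. My plan for it: given a precompact $M\subset A$ and $\varepsilon>0$, use $\rho(q_{J}(M))=0$ to choose $n$ so that $M^{n}$ admits a decomposition $M^{n}\subset F+N$ with $F\subset J$ finite (hence precompact in $J$) and $\Vert N\Vert<\varepsilon^{n}$. Expanding a large power $(M^{n})^{s}=(F+N)^{s}$ splits it into monomials of two types: those containing at least one factor from $F$ — which lie in $J$ because $J$ is an ideal — and those built entirely from $N$, whose norm is at most $\varepsilon^{ns}$. The $J$-part is a precompact subset of $J$, hence has spectral radius zero by the cq hypothesis on $J$. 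The subtle point, and where the real work lies, is that $\rho$ is not subadditive on precompact sets, so one cannot simply add the two contributions. The correct estimate groups monomials into long $F$-blocks (inside $J$, whose norms shrink by the cq hypothesis applied inside $J$) separated by short $N$-factors, and optimises the block-length against $\varepsilon$; this combinatorial bookkeeping converts the bounds into $\rho(M)\le C\varepsilon$, whence $\rho(M)=0$. Lemma~\ref{01} may be useful at the level of subalgebras to promote quasinilpotency of individual elements to spectral-radius control on the generated subalgebra.
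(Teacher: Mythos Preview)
The paper does not actually prove this theorem: Section~3.1 opens with ``All definitions and results of this subsection are from \cite{rad1}'' and states the result without argument. So there is nothing in the present paper to compare your proposal against; what follows is an assessment of your sketch on its own terms.

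Your overall architecture --- prove the extension property for the class of compactly quasinilpotent algebras, then run the standard radical machinery --- is the natural one and is essentially how \cite{rad1} proceeds. Two places in your sketch, however, are genuinely incomplete.

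\textbf{The extension lemma.} Your plan writes $M^{n}\subset F+N$ with $F\subset J$ finite and $\Vert N\Vert$ small, then speaks of ``long $F$-blocks \dots\ separated by short $N$-factors''. But in a typical monomial of $(F\cup N)^{s}$ the $F$-letters are \emph{not} contiguous, so you never see a power $F^{m}$ to which the hypothesis $\rho(F)=0$ applies. The repair is to absorb the intervening $N$-letters: set
\[
Q=\bigcup_{j\ge 0}N^{j}F\subset J,
\]
observe that $Q$ is precompact (this uses $\Vert N\Vert<1$ \emph{and} precompactness of $N$, so you must take $N=\{a-f(a):a\in M^{n}\}$, not merely a small ball), and rewrite every monomial containing at least one $F$-letter as an element of $Q^{r}\cdot N^{\ast}$. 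Now the cq hypothesis on $J$ gives $\rho(Q)=0$, and balancing the contribution of $Q^{r}$ against $\Vert N\Vert^{s-r}$ yields $\rho(F\cup N)\le C\,\varepsilon^{1/2}\Vert M\Vert^{1/2}$, hence $\rho(M)=0$. Your phrase ``combinatorial bookkeeping'' hides exactly this absorption step, without which the argument does not close.

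\textbf{Axiom (R2).} You assert that the $A$-ideal generated by $\mathcal{R}_{cq}(J)$ ``remains cq by extension''. This is circular: calling that ideal $I$, one has $I\subset J$, and $I/\mathcal{R}_{cq}(J)$ is a cq ideal of $J/\mathcal{R}_{cq}(J)$ only if it is zero --- which is precisely the claim $I=\mathcal{R}_{cq}(J)$ you are trying to prove. A correct route is to use the elementwise characterisation (the paper's Theorem~3.2): for $x\in\mathcal{R}_{cq}(J)$, $a\in A$ and precompact $K\subset J$, one has $xKa\subset\mathcal{R}_{cq}(J)$ (because $Ka\subset J$ and $\mathcal{R}_{cq}(J)$ is a $J$-ideal), hence $\rho(axK)=\rho(xKa)=0$, so $ax\in\mathcal{R}_{cq}(J)$. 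This shows $\mathcal{R}_{cq}(J)$ is already an $A$-ideal, whence $\mathcal{R}_{cq}(J)\subset\mathcal{R}_{cq}(A)$.
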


It is possible to give an individual test for an element to belong to
$\mathcal{R}_{cq}(A)$. It is formally similar to the known characterization of
the elements in the Jacobson radical.

\begin{theorem}
An element $a\in A$ belongs to $\mathcal{R}_{cq}(A)$ if and only if $\rho(aM)
= 0$ for any precompact set $M\subset A$.
\end{theorem}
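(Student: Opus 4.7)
The forward implication is the cheap direction. Assuming $a\in\mathcal{R}_{cq}(A)$, for any precompact $M\subset A$ the set $aM=\mathrm{L}_a(M)$ is precompact in $A$ and contained in the ideal $\mathcal{R}_{cq}(A)$. Since $\mathcal{R}_{cq}(A)$ is itself $\mathcal{R}_{cq}$-radical (property (R2) gives $\mathcal{R}_{cq}(\mathcal{R}_{cq}(A))=\mathcal{R}_{cq}(A)$), that is, the algebra $\mathcal{R}_{cq}(A)$ is compactly quasinilpotent, we get $\rho(aM)=0$.

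For the converse, the natural candidate is
\[
I=\{x\in A:\rho(xM)=0\ \text{for every precompact}\ M\subset A\}.
\]
If I can show $I$ is a closed, compactly quasinilpotent ideal, then by maximality of $\mathcal{R}_{cq}(A)$ I get $I\subset\mathcal{R}_{cq}(A)$, hence $a\in\mathcal{R}_{cq}(A)$. The ideal properties are mostly routine once the cyclic identity $\rho(N_1N_2)=\rho(N_2N_1)$ (proved just as in Lemma \ref{pass}(i) via $(N_1N_2)^n=N_1(N_2N_1)^{n-1}N_2$) is invoked: for $x\in I$ and $b\in A$ and precompact $M$, $bM$ and $Mb$ are precompact, and $\rho((xb)M)=\rho(x(bM))=0$ while $\rho((bx)M)=\rho(xMb)=\rho(x(Mb))=0$. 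Norm closure follows from the Lipschitz estimate $\|(xM)^n-(x'M)^n\|\leq n\|x-x'\|\|M\|^n\max(\|x\|,\|x'\|)^{n-1}$ combined with the infimum formula for $\rho$. For additivity the neat trick is to expand $((x+y)M)^n$ into $2^n$ products from $(\{x,y\}M)^n$, reducing $x+y\in I$ to $\rho(\{x,y\}M)=0$ for $x,y\in I$; this, in turn, is a special case of the next step applied to the finite precompact set $\{x,y\}$.

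The genuine content, and the step I expect to be the main obstacle, is verifying that $I$ is compactly quasinilpotent, i.e. that $\rho(N)=0$ for every precompact $N\subset I$. Fix such an $N$. For each individual $a\in N$, $aN$ is precompact and $\rho(aN)=0$ by hypothesis, so by Lemma \ref{01} every element of $aN$ is quasinilpotent, and in fact the whole subalgebra generated by $aN$ is a quasinilpotent algebra. The difficulty is upgrading this pointwise statement (uniform only over $n$, the power, for each fixed $a$) to the genuinely joint statement $\|N^k\|^{1/k}\to 0$. My plan is to use the precompactness of $N$ to reduce to a finite $\varepsilon$-net $F\subset N$, write $N\subset F+\varepsilon B_A$, and control $N^k$ by expanding $(F+\varepsilon B_A)^k$ and estimating the $2^k$ terms: the pure term lies in $F^k$ and the remaining $2^k-1$ terms contribute $O(\varepsilon\cdot(\|N\|+\varepsilon)^{k-1})$ each. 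Reducing matters to $\rho(F)=0$ for finite $F\subset I$, one iterates the information $\rho(a_iF)=0$ ($a_i\in F$): by Lemma \ref{01} each subalgebra $\mathrm{alg}(a_iF)$ consists of quasinilpotents, and the semigroups $\mathrm{alg}(a_iF)$ can be glued together via the ideal structure already established to force $\rho(F)=0$.

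If the direct estimate resists, a cleaner alternative is the transfinite route: build an increasing chain $(J_\alpha)$ of closed ideals of $I$, each of which is known to be $\mathcal{R}_{cq}$-radical (starting with a manifestly compactly quasinilpotent ideal such as the closed ideal generated by a single $a\in I$ of finite-dimensional orbit structure), exhaust $I$ by this chain, and apply Lemma \ref{transext} to conclude that $I$ itself is $\mathcal{R}_{cq}$-radical. Either way, the conceptual core is Lemma \ref{01}, which turns the pointwise vanishing $\rho(xM)=0$ into the quasinilpotence of entire generated subalgebras, and precompactness of $N$, which permits the passage from finite to arbitrary precompact subsets.
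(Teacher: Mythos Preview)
First, a point of context: the paper does not actually prove this theorem. It is quoted in Section~3.1 under the blanket attribution ``All definitions and results of this subsection are from \cite{rad1},'' so there is no in-paper argument to compare your attempt against, and your proposal has to be judged on its own.

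On the merits: the forward direction is correct, and the overall plan for the converse---show that $I=\{x\in A:\rho(xM)=0\text{ for all precompact }M\}$ is a closed compactly quasinilpotent ideal, hence contained in $\mathcal{R}_{cq}(A)$---is the natural one; the absorption arguments via $\rho(N_1N_2)=\rho(N_2N_1)$ are fine. (One small imprecision: additivity reduces to $\rho(\{x,y\}M)=0$, not to $\rho(\{x,y\})=0$; but since $\{x,y\}M\subset I$ by right-absorption and is precompact, this does follow from the ``next step'' once that step is established.) The genuine gap is precisely the step you flag as the obstacle, and neither of your routes closes it. The $\varepsilon$-net reduction fails outright: from $N\subset F+\varepsilon B_A$ one obtains
\[
\|N^k\|\le\|F^k\|+\bigl((\|F\|+\varepsilon)^k-\|F\|^k\bigr),
\]
and the $k$-th root of the error term tends to $\|F\|+\varepsilon$, not to $0$, so $\rho(F)=0$ yields only the vacuous bound $\rho(N)\le\|N\|+\varepsilon$. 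For the finite case, Lemma~\ref{01} tells you each $\mathrm{alg}(a_iF)$ consists of quasinilpotents, but this controls only words with a fixed first letter; a union of sets with $\rho=0$ need not have $\rho=0$, and ``gluing via the ideal structure'' is an aspiration, not an argument. The transfinite alternative is not specified enough to assess. The same defect undermines your closedness argument: in the Lipschitz bound $\|(xM)^k\|\le\|(x_nM)^k\|+k\|x-x_n\|\|M\|^kC^{k-1}$ the error again has $k$-th root bounded away from $0$, and $\rho=\inf_k\|\cdot\|^{1/k}$ does not let you interchange the limits in $n$ and $k$. A correct proof (as in \cite{rad1}) must exploit the hypothesis $\rho(aM)=0$ for \emph{all} precompact $M$ simultaneously in a way your outline does not capture.
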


The following result shows that $\mathcal{R}_{cq}(A)$ can be considered as
inessential when one calculates the joint spectral radius.

\begin{theorem}
\label{iness} Let $q=q_{\mathcal{R}_{cq}(A)}$ be the quotient map
$A\longrightarrow A/\mathcal{R}_{cq}(A)$. Then $\rho(M)=\rho(q(M))$ for each
precompact set $M\subset A$.
\end{theorem}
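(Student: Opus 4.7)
The inequality $\rho(q(M))\leq\rho(M)$ is immediate: $q$ is a contractive surjective homomorphism, so $\|q(M^n)\|=\|q(M)^n\|\leq\|M^n\|$ for every $n$, and taking $n$-th roots and letting $n\to\infty$ yields the bound. The content of the theorem is the reverse inequality, and to obtain it I would proceed as follows.

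Write $J=\mathcal{R}_{cq}(A)$. Axiom $(\mathrm{R}2)$ applied to $J$ as an ideal of $A$ gives $\mathcal{R}_{cq}(J)=J\cap\mathcal{R}_{cq}(A)=J$, so $J$ itself is compactly quasinilpotent: every precompact subset of $J$ has joint spectral radius $0$. Fix $c>\rho(q(M))$; it suffices to show $\rho(M)\leq c$. Choose $n$ large enough that $\|q(M^n)\|<c^n$. The set $M^n$ is precompact (products of precompact sets are precompact), so it admits a finite $\varepsilon$-net $F\subset M^n$; each $f\in F$ satisfies $\mathrm{dist}(f,J)<c^n$, and so there exists $v_f\in J$ with $\|f-v_f\|<c^n+\varepsilon$. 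This yields a Minkowski decomposition
\[
M^n\subset V+R,
\]
where $V:=\{v_f:f\in F\}$ is a finite subset of $J$ and $R$ is a precompact subset of $A$ with $\|R\|\leq c^n+2\varepsilon$. Iterating, $M^{nk}\subset(V+R)^k$; expanding by bilinearity writes $(V+R)^k$ as a Minkowski sum of $2^k$ ordered products in the letters $V$ and $R$: the unique all-$R$ summand is $R^k$ with $\|R^k\|\leq(c^n+2\varepsilon)^k$, while every other summand contains at least one factor from $V\subset J$, hence lies in the two-sided ideal $J$ and is precompact as a product of precompact sets.

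The main obstacle is to show that these $J$-summands do not damage the $k$-th root asymptotics, i.e., that $\rho(V+R)\leq\|R\|+o(1)$. The trivial bound $\|(V+R)^k\|\leq(\|V\|+\|R\|)^k$ is much too weak; the saving grace must come from $\rho=0$ on precompact subsets of $J$. I would package the mixed $V/R$ words into iterated products of a carefully chosen auxiliary precompact set $W\subset J$ (for example $W=V\cup VR\cup RV$, which is precompact in $J$ and so has $\rho(W)=0$) and exploit the elementwise characterization $\mathcal{R}_{cq}(A)=\{a\in A:\rho(aN)=0 \text{ for every precompact }N\subset A\}$, together with Lemma \ref{01}, to force each long $V$-run in a word to contribute sub-exponentially. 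Granting the resulting bound $\rho(V+R)\leq c^n+o(1)$, the inclusion $M^n\subset V+R$ yields $\rho(M)^n=\rho(M^n)\leq\rho(V+R)\leq c^n+o(1)$; taking $n$-th roots, sending $\varepsilon\to 0$, and then letting $c\downarrow\rho(q(M))$ completes the proof.
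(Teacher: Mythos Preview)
This theorem is not proved in the present paper; as the opening line of Section~3.1 indicates, it is quoted from \cite{rad1}. So there is no in-paper argument to compare against, and your proposal must stand on its own.

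It does not. The reduction $M^n\subset V+R$ with $V\subset J$ finite and $\|R\|<c^n+2\varepsilon$ is correct, and after the convex-hull trick $\rho(V+R)\le 2\rho(V\cup R)$ the whole problem comes down to showing $\rho(V\cup R)\le\|R\|$ (the factor $2^{1/n}$ is harmless). But this is precisely the step you do not carry out; the phrase ``Granting the resulting bound\ldots'' is an explicit placeholder for the missing argument, and the hints you give do not fill it. Your proposed auxiliary set $W=V\cup VR\cup RV$ cannot absorb the mixed words: a word such as $v\,r_1r_2\cdots r_m\,v'$ with $m$ large has interior $R$-letters that are neither in $V$ nor adjacent to one, so it does not factor through powers of $W$. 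More to the point, the difficulty is not ``long $V$-runs'' at all --- a single $V$ already lands a word in $J$ --- but the long $R$-runs flanking the $V$'s, which carry norm $\|R\|^m$ and occur in exponentially many summands of the expansion of $(V+R)^k$. Neither $\rho(W)=0$ nor the elementwise criterion $\rho(aN)=0$ says anything about the \emph{sum} of $2^k$ such terms, and Lemma~\ref{01} only yields quasinilpotence of individual elements, not norm bounds.

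What is actually required is a direct combinatorial estimate showing that for precompact $K\subset J$ and precompact $R\subset A$ one has $\rho(K\cup R)=\rho(R)$; this uses the full strength of compact quasinilpotence (namely $\rho(KR^j)=0$ for \emph{all} $j$ simultaneously) in a nontrivial way and is exactly the content proved in \cite{rad1}. Your outline reduces the theorem to this lemma but does not supply it, and the lemma is essentially equivalent in difficulty to the theorem itself.
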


\subsection{The hypocompact radical}

We denote the set of all compact elements of a Banach algebra $A$ by $\mathcal{C}(A)$. Note that
$\mathcal{C}(A)$ is a semigroup ideal of $A$ not closed, in general, under addition, even for semisimple Banach
algebras. The following result follows easily from the Open Mapping Theorem.

\begin{lemma}
\label{lhc1} If $f:A\longrightarrow B$ is a continuous surjective
homomorphism of Banach algebras then $f(\mathcal{C}(A))\subset\mathcal{C}(B)$.
\end{lemma}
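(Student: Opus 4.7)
The plan is to unpack what it means for $a\in A$ to be compact, namely that $\mathrm{W}_a=\mathrm{L}_a\mathrm{R}_a$ sends bounded sets in $A$ to relatively compact sets, and transport this property to $\mathrm{W}_{f(a)}$ on $B$ using the surjectivity of $f$ together with the Open Mapping Theorem.

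The first step is the algebraic observation that $f$ intertwines the two-sided multiplication operators: for every $x\in A$,
\[
\mathrm{W}_{f(a)}(f(x)) = f(a)\,f(x)\,f(a) = f(axa) = f(\mathrm{W}_{a}(x)),
\]
so $\mathrm{W}_{f(a)}\circ f = f\circ \mathrm{W}_{a}$. This is the bridge that lets me transfer compactness.

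Next I invoke the Open Mapping Theorem. Since $f$ is continuous and surjective between Banach algebras, it is open, so there exists a constant $C>0$ such that the closed unit ball $B_{1}$ of $B$ is contained in $f(C\cdot A_{1})$, where $A_{1}$ is the closed unit ball of $A$. Given any $y\in B_{1}$ I may thus choose $x\in A$ with $\|x\|\le C$ and $f(x)=y$, and by the intertwining identity
\[
\mathrm{W}_{f(a)}(y) = f(\mathrm{W}_{a}(x)) \in f\bigl(\mathrm{W}_{a}(C\cdot A_{1})\bigr).
\]
Hence $\mathrm{W}_{f(a)}(B_{1}) \subset f\bigl(\mathrm{W}_{a}(C\cdot A_{1})\bigr)$.

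Finally, since $a\in\mathcal{C}(A)$, $\mathrm{W}_{a}$ is compact on $A$, so $\mathrm{W}_{a}(C\cdot A_{1})$ is relatively compact in $A$; continuity of $f$ then makes $f\bigl(\mathrm{W}_{a}(C\cdot A_{1})\bigr)$ relatively compact in $B$. Consequently $\mathrm{W}_{f(a)}(B_{1})$ is relatively compact, which means $\mathrm{W}_{f(a)}$ is a compact operator on $B$ and $f(a)\in\mathcal{C}(B)$. There is really no serious obstacle here; the only point to be careful about is that one cannot use $f$ to push compactness directly on the operator level without first using the Open Mapping Theorem to produce a bounded preimage of $B_{1}$, which is exactly where continuity and surjectivity (hence the Banach space structure on both $A$ and $B$) enter.
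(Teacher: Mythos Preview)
Your proof is correct and is precisely the argument the paper has in mind: the paper gives no proof beyond the remark that the lemma ``follows easily from the Open Mapping Theorem,'' and your use of the intertwining identity $\mathrm{W}_{f(a)}\circ f=f\circ\mathrm{W}_a$ together with a bounded lifting of $B_1$ is exactly that easy deduction.
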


\begin{lemma}
\label{lhc2} Let $J$ be an ideal of $A$. If $\mathcal{C}(J)\neq0$ then
$J\cap\mathcal{C}(A)\neq0$.
\end{lemma}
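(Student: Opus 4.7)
My plan is to produce a nonzero $b \in J \cap \mathcal{C}(A)$ directly from a given $a \in \mathcal{C}(J) \setminus \{0\}$. The key observation is that, for any $u, v \in A$ with $u \in J$ or $v \in J$, the element $b := uav$ automatically lies in $J$ (two-sided ideal property) and in $\mathcal{C}(A)$. The compactness follows from the identity
\[
bxb = (uav)\,x\,(uav) = u\,\bigl(a(vxu)a\bigr)\,v = u \cdot \mathrm{W}_a(vxu) \cdot v,
\]
valid for every $x \in A$: as $x$ runs over the unit ball of $A$ the set $\{vxu\}$ is bounded in $J$ (since $u \in J$ or $v \in J$); the hypothesis $a \in \mathcal{C}(J)$ says precisely that $\mathrm{W}_a$ acts compactly on $J$, so $\{\mathrm{W}_a(vxu)\}$ is precompact in $J$; continuous left- and right-multiplication by $u$ and $v$ then yield a precompact subset of $A$, proving that $\mathrm{W}_b$ is compact on $A$.

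It remains to choose $u, v$ so that $b = uav \neq 0$. If some such choice works, we are done. Otherwise $JaA = 0$ and $AaJ = 0$. I handle this via the observation that any $z \in Ja$ satisfies $zA \subset JaA = 0$, hence $\mathrm{W}_z \equiv 0$ and $z \in \mathcal{C}(A)$ trivially; symmetrically every $z \in aJ$ satisfies $Az = 0$ and is in $\mathcal{C}(A)$. So if $Ja \neq 0$ or $aJ \neq 0$ we pick a nonzero $b$ in one of these sets. The last case is $Ja = aJ = 0$, in which the element $a$ itself works: for every $x \in A$ one has $ax \in aA \subset J$ (since $a \in J$), so $(ax)a \in Ja = 0$, i.e.\ $\mathrm{W}_a \equiv 0$ on $A$, and so $a \in J \cap \mathcal{C}(A)$.

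The main conceptual obstacle is that for nilpotent $a$ the naive candidates $a, a^2, \ldots$ may all vanish, so one cannot simply promote $a$ to an element of $\mathcal{C}(A)$. Inserting ideal factors $u$ and $v$ on both sides of $a$ preserves the compactness of $\mathrm{W}_a$ on $J$ while allowing enough flexibility to produce a nonzero element of $J$; the fallback cases cover precisely the scenario in which this flexibility is absent, and there the relevant elements become compact for the trivial reason of annihilating $A$ on one side.
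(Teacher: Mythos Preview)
Your argument is correct and follows essentially the same strategy as the paper: factor $\mathrm{W}_{b}$ through the compact-on-$J$ operator $\mathrm{W}_a$ to show that suitable products lie in $J\cap\mathcal{C}(A)$, and in the degenerate case observe that $\mathrm{W}_a$ vanishes identically on $A$. The paper's version is slightly more economical---it uses only the two-factor product $ab$ (via $\mathrm{W}_{ab}=\mathrm{R}_b\mathrm{W}_a\mathrm{L}_b$) and thus needs just the single fallback $\mathcal{C}(J)J=0$, which immediately gives $\mathrm{W}_a(x)=a(xa)\in\mathcal{C}(J)J=0$---whereas your three-factor sandwich $uav$ forces the extra intermediate cases $Ja\neq 0$ and $aJ\neq 0$.
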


\begin{proof}
It is easy to see that
\begin{equation}
\mathrm{W}_{ba}=\mathrm{L}_{b}\mathrm{W}_{a}\mathrm{R}_{b}=\mathrm{R}%
_{a}\mathrm{W}_{b}\mathrm{L}_{a}\label{prod}%
\end{equation}
for all $a,b\in A$. Hence if $a\in\mathcal{C}(J)$ then for any $b\in J$, the
operator $W_{ba}$ is compact on $A$. So
\[
\mathcal{C}(J)J\subset J\cap\mathcal{C}(A)
\]
and we are done if $\mathcal{C}(J)J\neq0$. On the other hand, if
$\mathcal{C}(J)J=0$ then
\[
\mathcal{C}(J)\subset\mathcal{C}(A)
\]
because $W_{a}(x)=a(xa)=0$ for any $a\in\mathcal{C}(J)$ and $x\in A$.
\end{proof}

A Banach algebra $A$ is called \textit{bicompact} if all operators
$\mathrm{L}_{a}\mathrm{R}_{b}$ ($a,b\in A$) are compact (in other words, $A$
is bicompact if it consists of mutually compact elements). An ideal of $A$ is
called \textit{bicompact} if it is bicompact as a Banach algebra.

\begin{lemma}
\label{lhc3}Let $A$ be a Banach algebra. Then

\begin{itemize}
\item If $a\in C(A)$ then the ideal $\mathcal{J}(a)$ generated by $a$ is bicompact.

\item If $J$ is a bicompact ideal of $A$ then the operator $\mathrm{L}%
_{a}\mathrm{R}_{b}$ is compact for every $a,b\in\operatorname*{span}(JA)$.
\end{itemize}
\end{lemma}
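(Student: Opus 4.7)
\emph{Plan.} Both parts will be proved by realizing the multiplication operator $\mathrm{L}_x\mathrm{R}_y$ as a composition of bounded operators around a compact middle factor, the latter supplied either by the hypothesis $a\in\mathcal{C}(A)$ in part~(i) or by the bicompactness of $J$ in part~(ii). The algebraic manipulation is driven by identity~(\ref{prod}) together with the elementary observation that $\mathrm{L}_c$ commutes with every $\mathrm{R}$.

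For part~(i), the ideal $\mathcal{J}(a)=\overline{A^1aA^1}$ is the norm-closure of $A^1aA^1$. Since compact operators on $\mathcal{J}(a)$ form a norm-closed subspace of $\mathcal{B}(\mathcal{J}(a))$ and $(x,y)\mapsto\mathrm{L}_x\mathrm{R}_y$ is jointly continuous, it suffices to verify compactness of $\mathrm{L}_x\mathrm{R}_y|_{\mathcal{J}(a)}$ for $x=b_1ac_1$ and $y=b_2ac_2$ with $b_i,c_i\in A^1$. For such $x,y$ the computation
\[
\mathrm{L}_x\mathrm{R}_y(z)=(b_1ac_1)z(b_2ac_2)=b_1\,\mathrm{W}_a(c_1zb_2)\,c_2=\bigl(\mathrm{L}_{b_1}\mathrm{R}_{c_2}\bigr)\circ\mathrm{W}_a\circ\bigl(\mathrm{L}_{c_1}\mathrm{R}_{b_2}\bigr)(z)
\]
exhibits $\mathrm{L}_x\mathrm{R}_y$ as a composition of bounded operators with the compact operator $\mathrm{W}_a$, hence compact on $A$. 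Restricting to the invariant closed subspace $\mathcal{J}(a)$ preserves this compactness, giving bicompactness of $\mathcal{J}(a)$.

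For part~(ii), bilinearity of $(a,b)\mapsto\mathrm{L}_a\mathrm{R}_b$ and the norm-closedness of compact operators reduce the problem to $a=j_1c_1$ and $b=j_2c_2$ with $j_1,j_2\in J$ and $c_1,c_2\in A$. Using that $\mathrm{L}_{c_1}$ commutes with every $\mathrm{R}$ and that $\mathrm{R}_{c_2}$ commutes with $\mathrm{L}_{j_1}$, I rewrite
\[
\mathrm{L}_a\mathrm{R}_b=\mathrm{L}_{j_1}\mathrm{L}_{c_1}\mathrm{R}_{c_2}\mathrm{R}_{j_2}=\mathrm{R}_{c_2}\circ\mathrm{L}_{j_1}\mathrm{R}_{j_2}\circ\mathrm{L}_{c_1},
\]
isolating the middle factor $\mathrm{L}_{j_1}\mathrm{R}_{j_2}$ whose subscripts both lie in $J$. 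Its image is contained in $J^2\subset J$, and bicompactness of $J$ says exactly that $\mathrm{L}_{j_1}\mathrm{R}_{j_2}|_J$ is compact on $J$. My plan is then to use the factorization of $\mathrm{L}_{j_1}\mathrm{R}_{j_2}:A\to A$ through the bounded map $\mathrm{R}_{j_2}:A\to J$ to promote this to compactness on $A$, after which composition with the bounded outer factors $\mathrm{R}_{c_2}$ and $\mathrm{L}_{c_1}$ finishes the proof.

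The main obstacle I anticipate lies in this last step, namely converting compactness on $J$ (directly delivered by bicompactness) into compactness on the larger space $A$. The point to exploit is that because $j_2\in J$, right multiplication by $j_2$ sends the entirety of $A$ into the ideal $J$, so that $\mathrm{L}_{j_1}\mathrm{R}_{j_2}:A\to A$ genuinely factors through $J$; the subtlety is that bicompactness gives compactness of the combined operator $\mathrm{L}_{j_1}\mathrm{R}_{j_2}$ on $J$, not of $\mathrm{L}_{j_1}$ on $J$ alone, and reconciling these via the image containment $\mathrm{L}_{j_1}\mathrm{R}_{j_2}(A)\subset J^2$ will be the delicate point to carry out.
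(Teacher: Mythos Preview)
Your treatment of part~(i) is correct and is exactly what the paper intends by ``follows from (\ref{prod})'': you factor $\mathrm{L}_x\mathrm{R}_y$ for $x=b_1ac_1$, $y=b_2ac_2$ through the compact operator $\mathrm{W}_a$, then pass to linear combinations and closures. One small correction: the ideal generated by $a$ is $\overline{\operatorname{span}(A^1aA^1)}$, not $\overline{A^1aA^1}$, so your reduction needs bilinearity of $(x,y)\mapsto\mathrm{L}_x\mathrm{R}_y$ in addition to joint continuity; you invoke this implicitly, and it causes no trouble.

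For part~(ii), however, the obstacle you flag is a genuine gap and your plan does not close it. You want to deduce that $\mathrm{L}_{j_1}\mathrm{R}_{j_2}$ is compact on $A$ from its compactness on $J$ by factoring through $\mathrm{R}_{j_2}:A\to J$. But that factorization reads
\[
\mathrm{L}_{j_1}\mathrm{R}_{j_2}\big|_A \;=\; \mathrm{L}_{j_1}\big|_J \circ \bigl(\mathrm{R}_{j_2}:A\to J\bigr),
\]
and bicompactness only tells you the \emph{product} $\mathrm{L}_{j_1}\mathrm{R}_{j_2}$ is compact on $J$, not that $\mathrm{L}_{j_1}|_J$ is compact. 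The image $\mathrm{R}_{j_2}(A_1)$ is merely a bounded subset of $J$, and applying $\mathrm{L}_{j_1}$ to a bounded (rather than precompact) subset of $J$ need not yield a precompact set. The containment $\mathrm{L}_{j_1}\mathrm{R}_{j_2}(A)\subset J\cdot J$ that you propose to exploit does not help either: it only repeats that the image lies in $J$. What one \emph{can} show easily is that $(\mathrm{L}_{j_1}\mathrm{R}_{j_2})^2$ is compact on $A$ (the first application lands in a bounded subset of $J$, the second is $\mathrm{L}_{j_1}\mathrm{R}_{j_2}|_J$), but compactness of a square does not force compactness of the operator. The paper's own proof here is only the assertion that (\ref{prod}) suffices, so there is no more detailed argument to compare against; but as written, your proposal for (ii) remains incomplete at precisely the point you yourself identify.
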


\begin{proof}
Both statements follow easily from (\ref{prod}).
\end{proof}

A Banach algebra $A$ is called \textit{hypocompact} if every non-zero quotient
$A/J$ has a non-zero compact element. An ideal is \textit{hypocompact} if it
is hypocompact as a Banach algebra.

Clearly each bicompact algebra is hypocompact. We will see that all
hypocompact algebras can be obtained by subsequent extensions of bicompact
ones. Let us prove first that the class of all hypocompact algebras is stable
under extensions.

\begin{lemma}
\label{lhcext} Let $J$ be an ideal of $A$. If $J$ and $A/J$ are hypocompact
then $A$ is hypocompact.
\end{lemma}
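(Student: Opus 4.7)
The plan is to verify the defining property of hypocompactness for $A$ directly: given any proper closed ideal $I\triangleleft A$, produce a non-zero compact element of $A/I$. I would split into two cases according to whether $J\subset I$.

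If $J\subset I$, then $A/I=(A/J)/(I/J)$ is a non-zero quotient of the hypocompact algebra $A/J$, and the desired compact element is supplied immediately by the definition of hypocompactness applied to $A/J$.

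If $J\not\subset I$, then $J\cap I$ is a proper closed ideal of $J$, so hypocompactness of $J$ produces a non-zero compact element $c$ of the quotient $J/(J\cap I)$. To transport $c$ into $A/I$ I would pass through the intermediate Banach algebra $B=A/(J\cap I)$. The subset $K=J/(J\cap I)$ of $B$ is a closed ideal, and its norm as a quotient of $J$ agrees with the norm it inherits from $B$, so $c$ is a non-zero compact element of the Banach algebra $K$ in the sense that $\mathrm{W}_{c}$ is compact on $K$. Applying Lemma~\ref{lhc2} to the closed ideal $K\triangleleft B$ yields a non-zero element $c'\in K\cap\mathcal{C}(B)$. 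Since $I/(J\cap I)$ is a closed ideal of $B$ with $B/(I/(J\cap I))\cong A/I$, Lemma~\ref{lhc1} applied to the quotient map $q\colon B\to A/I$ gives $q(c')\in\mathcal{C}(A/I)$; and $q(c')\neq 0$ because any lift of $c'$ lies in $J$ but outside $I$ (else $c'$ would already vanish in $K$).

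The main obstacle is exactly the one resolved by this detour: the naive map $J/(J\cap I)\to A/I$ is injective and continuous but need not be a homeomorphism onto its image $(J+I)/I$, which can fail to be closed in $A/I$; this blocks any direct use of Lemma~\ref{lhc2} at the level of $A/I$. Quotienting instead by the smaller ideal $J\cap I$ promotes that injection to an isometric closed-ideal inclusion $K\hookrightarrow B$, and this is precisely what allows Lemmas~\ref{lhc1} and~\ref{lhc2} to be chained together to deliver a non-zero compact element inside $A/I$.
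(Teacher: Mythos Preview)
Your proof is correct and follows essentially the same route as the paper: split on whether $J\subset I$, and in the nontrivial case pass to $B=A/(J\cap I)$, apply Lemma~\ref{lhc2} to the closed ideal $J/(J\cap I)\triangleleft B$, then push forward to $A/I$ via Lemma~\ref{lhc1}. The paper's argument is written more tersely but is structurally identical; your additional remark about why one must work in $A/(J\cap I)$ rather than directly in $A/I$ is a nice clarification.
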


\begin{proof}
Let $I$ be an ideal of $A$. If $J\subset I$ then $A/I$ can be identified with
$(A/J)/(I/J)$, the quotient of a hypocompact algebra. By definition, the
latter contains non-zero compact elements, so does $A/I$.

Suppose now that $I$ does not contain $J$. Setting $K=J\cap I$ we have that
\[
\mathcal{C}(J/K)\neq0.
\]
By Lemma \ref{lhc2},
\[
J/K\cap\mathcal{C}(A/K)\neq0.
\]
Let $0\neq q_{K}(a)\in J/K\cap\mathcal{C}(A/K)$. Then
\[
a\notin I
\]
and
\[
q_{I}(a)\in\mathcal{C}(A/I)
\]
by Lemma \ref{lhc1}. Thus $A/I$ contains non-zero compact elements.
\end{proof}

\begin{proposition}
\label{phc1} Let $J$ be an ideal of a Banach algebra $A$. Then the following
conditions are equivalent.

\begin{itemize}
\item[(i)] $J$ is hypocompact.

\item[(ii)] For each continuous surjective homomorphism $f:A\longrightarrow
B$, either $f(J)=0$ or $f(J)\cap\mathcal{C}(B)\neq0$.

\item[(iii)] There is a transfinite increasing chain of ideals $(J_{\alpha
})_{\alpha\leq\gamma}$ in $A$ such that $J_{1}=0$, $J_{\gamma}=J$, and all
$J_{\alpha+1}/J_{\alpha}$ are bicompact.
\end{itemize}
\end{proposition}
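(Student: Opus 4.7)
I plan to establish the cycle $(\mathrm{i})\Rightarrow(\mathrm{ii})\Rightarrow(\mathrm{iii})\Rightarrow(\mathrm{i})$.

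For $(\mathrm{i})\Rightarrow(\mathrm{ii})$, given a continuous surjective homomorphism $f:A\to B$ with $f(J)\neq 0$, I would set $I=J\cap\ker f$, a closed ideal of $A$ contained in $J$. Then $J/I$ is a nonzero quotient of the hypocompact algebra $J$, hence hypocompact, so $\mathcal{C}(J/I)\neq 0$. Viewing $J/I$ as a closed ideal of $A/I$ and applying Lemma \ref{lhc2}, I obtain a nonzero $\bar c\in (J/I)\cap\mathcal{C}(A/I)$. Since $f$ factors through $A/I$ as a continuous surjection $\tilde f:A/I\to B$ with $\ker\tilde f\cap (J/I)=0$, Lemma \ref{lhc1} sends $\bar c$ to a nonzero element of $f(J)\cap\mathcal{C}(B)$.

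For $(\mathrm{ii})\Rightarrow(\mathrm{iii})$, I would build the chain by transfinite recursion. Set $J_1=0$. Given $J_\beta$ a closed ideal of $A$ with $J_\beta\subsetneq J$, apply $(\mathrm{ii})$ to the quotient map $A\to A/J_\beta$: since $J/J_\beta\neq 0$, there exists a nonzero $\bar c\in (J/J_\beta)\cap\mathcal{C}(A/J_\beta)$. By Lemma \ref{lhc3}, the closed ideal $\mathcal{J}(\bar c)$ of $A/J_\beta$ generated by $\bar c$ is bicompact, and it lies inside $J/J_\beta$. Let $J_{\beta+1}$ be its preimage in $A$; this is a closed ideal of $A$ with $J_\beta\subsetneq J_{\beta+1}\subset J$ and $J_{\beta+1}/J_\beta$ bicompact. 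At limit ordinals, set $J_\beta=\overline{\bigcup_{\alpha<\beta}J_\alpha}$. The recursion must terminate at some $\gamma$ with $J_\gamma=J$ by a cardinality argument.

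For $(\mathrm{iii})\Rightarrow(\mathrm{i})$, let $L$ be a closed ideal of $J$ with $L\neq J$. Since $J_\gamma=J\not\subset L$, there is a least ordinal $\alpha_0$ with $J_{\alpha_0}\not\subset L$; this $\alpha_0$ cannot be a limit (the closure of a union of subsets of $L$ remains in $L$), so $\alpha_0=\beta+1$ with $J_\beta\subset L$ and $J_{\beta+1}\not\subset L$. If $J_{\beta+1}^{2}\subset L$, any $c\in J_{\beta+1}\setminus L$ satisfies $cJc\subset J_{\beta+1}\cdot J_{\beta+1}\subset L$, so $\mathrm{W}_{c+L}=0$ on $J/L$ and $c+L$ is trivially a nonzero compact element. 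Otherwise $B:=J_{\beta+1}/J_\beta$ is bicompact with $B^{2}\neq 0$, and Lemma \ref{lhc2} applied to $B$ as an ideal of $A/J_\beta$ forces every product $b_1b_2$ with $b_i\in B$ to lie in $\mathcal{C}(A/J_\beta)$. I would pick $c=c_1c_2$ with $c_1,c_2\in J_{\beta+1}$ and $c\notin L$; then $c+J_\beta\in\mathcal{C}(A/J_\beta)$, so $\mathrm{W}_{c+J_\beta}$ is compact on $A/J_\beta$. Its restriction to the closed invariant subspace $J/J_\beta$ remains compact, and pushing through the quotient $J/J_\beta\to J/L$ (well-defined because $J_\beta\subset L$) identifies the induced operator with $\mathrm{W}_{c+L}$; the open mapping theorem applied to this quotient then yields compactness of $\mathrm{W}_{c+L}$ on $J/L$.

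The main obstacle is the limit step in $(\mathrm{iii})\Rightarrow(\mathrm{i})$: a naive transfinite induction would require hypocompactness to be preserved under closures of unions of ideals, yet compactness of an element in a subalgebra does not in general transfer to an ambient algebra. I circumvent this by focusing on the first chain-ordinal at which the chain escapes $L$, necessarily a successor, reducing the general case to a single application of Lemma \ref{lhc2} on the relevant bicompact quotient.
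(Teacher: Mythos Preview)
Your proof is correct and follows the same cycle as the paper. The implications $(\mathrm{i})\Rightarrow(\mathrm{ii})$ and $(\mathrm{ii})\Rightarrow(\mathrm{iii})$ are essentially identical to the paper's (you use transfinite recursion where the paper invokes Zorn's Lemma, a purely stylistic difference).

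For $(\mathrm{iii})\Rightarrow(\mathrm{i})$ your argument is actually more careful than the paper's. The paper, after locating the first successor ordinal $\alpha=\beta+1$ with $J_{\beta}\subset I$ and $J_{\beta+1}\not\subset I$, simply asserts that $q_{I}(J_{\beta+1})\subset\mathcal{C}(J/I)$. But bicompactness of $J_{\beta+1}/J_{\beta}$ only gives compactness of $\mathrm{L}_{\bar a}\mathrm{R}_{\bar b}$ on the space $J_{\beta+1}/J_{\beta}$, not on the larger space $J/J_{\beta}$ where the relevant $\mathrm{W}_{a}$ must act; so that blanket claim is not immediate. Your case split circumvents this: either $J_{\beta+1}^{2}\subset L$ (so $\mathrm{W}_{c+L}=0$ for any $c\in J_{\beta+1}\setminus L$), or you can choose $c=c_{1}c_{2}\in J_{\beta+1}^{2}\setminus L$, and then the identity $\mathrm{W}_{c_{1}c_{2}}=\mathrm{L}_{c_{1}}\mathrm{W}_{c_{2}}\mathrm{R}_{c_{1}}$ together with the fact that $\mathrm{R}_{c_{1}}$ maps $A/J_{\beta}$ into the bicompact ideal $J_{\beta+1}/J_{\beta}$ yields $c+J_{\beta}\in\mathcal{C}(A/J_{\beta})$, after which the restriction to $J/J_\beta$ and descent to $J/L$ are routine.

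One small point of phrasing: when you write ``Lemma~\ref{lhc2} applied to $B$ \ldots forces every product $b_{1}b_{2}$ to lie in $\mathcal{C}(A/J_{\beta})$'', you are really invoking the formula~(\ref{prod}) from the \emph{proof} of that lemma rather than its statement; it would be cleaner to cite (\ref{prod}) directly. Also, the clause ``with $B^{2}\neq 0$'' is not needed for the argument (what you use is $J_{\beta+1}^{2}\not\subset L$, which is your case hypothesis).
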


\begin{proof}
(i)$\Longrightarrow$(ii) Let $I=\ker f$ and $K=I\cap J$.  If $J\subset I$
then
\[
f(J)=0.
\]
Otherwise there is a non-zero $a\in J/K\cap\mathcal{C}(A/K)$ by Lemma
\ref{lhc2}. Let $g:A/K\longrightarrow B$ be defined by
\[
g\left(  q_{K}\left(  b\right)  \right)  =f\left(  b\right)
\]
for every $b\in A$. Then $g$ is a continuous surjective homomorphism. Take
$b\in J$  such that
\[
a=q_{K}\left(  b\right)  .
\]
Then $f\left(  b\right)  =g\left(  a\right)  $ is a non-zero compact element
of $B$.

(ii)$\Longrightarrow$(iii) Let us consider all transfinite increasing chains $(J_{\alpha})_{\alpha\leq\beta}$
such that $J_{\alpha}\subset J$, $J_{\alpha +1}/J_{\alpha}$ is bicompact and $J_{\alpha}\neq J_{\alpha +1}$ for
any $\alpha<\beta$. Clearly these chains form a set because $A$ is a set. Order the set of chains by
\[
(J_{\alpha})_{\alpha\leq\beta_{1}}\prec(I_{\alpha})_{\alpha\leq\beta_{2}%
}\text{ if }\beta_{1}\leq\beta_{2}\text{ and }J_{\alpha}=I_{\alpha}\text{ for
}\alpha\leq\beta_{1}.
\]
It follows from the Zorn Lemma that there is a maximal element $(J_{\alpha
})_{\alpha\leq\gamma}$ in this set. If $J_{\gamma}\neq J$ then there is a
bicompact ideal $I$ of $J/J_{\gamma}$. Put
\[
J_{\gamma+1}=\{x\in J:q_{J_{\gamma}}(x)\in I\}.
\]
Then one can add $J_{\gamma+1}$ to the chain, in contradiction with its maximality.

(iii)$\Longrightarrow$(i) Let $I$ be an ideal of $J$. Let ${\alpha}$ be the
first  ordinal  for which $J_{\alpha}$ is not contained in $I$. Then
\[
q_{I}(J_{\alpha})\subset\mathcal{C}(J/I).
\]
It follows that
\[
\mathcal{C}(J/I)\neq0.
\]

\end{proof}

\begin{corollary}
\label{chcid} An ideal of a hypocompact Banach algebra is hypocompact.
\end{corollary}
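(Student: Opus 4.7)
The natural plan is to verify condition~(iii) of Proposition~\ref{phc1} for~$J$: exhibit a transfinite chain of ideals of $A$ rising from $0$ to $J$ with bicompact successive quotients, from which the implication (iii)$\Rightarrow$(i) immediately delivers the hypocompactness of $J$. Since $A$ is itself hypocompact, Proposition~\ref{phc1} supplies a chain $(I_\alpha)_{\alpha\le\gamma}$ of ideals of $A$ with $I_1=0$, $I_\gamma=A$, and $I_{\alpha+1}/I_\alpha$ bicompact for each $\alpha$. I would intersect with $J$: set $J_\alpha:=I_\alpha\cap J$, which is a closed ideal of $A$ (intersection of two ideals), with $J_1=0$ and $J_\gamma=J$. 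Two things require verification: (a) the limit-ordinal identity $J_\beta=\overline{\bigcup_{\alpha<\beta}J_\alpha}$, and (b) bicompactness of the successor quotients $J_{\alpha+1}/J_\alpha$.

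For the successor case, I would apply Lemma~\ref{lhc3}(ii) (passing to the unitization of $A$ if needed) to the bicompact ideal $I_{\alpha+1}/I_\alpha$ of $A/I_\alpha$: for each $x\in J_{\alpha+1}$ the operator $\mathrm{L}_{x+I_\alpha}\mathrm{R}_{x+I_\alpha}$ is compact on $A/I_\alpha$. Composing with the canonical map $J\to A/I_\alpha$, which factors through $J/J_\alpha$ since $J_\alpha\subseteq I_\alpha$, and using that $xJx\subseteq J\cap I_{\alpha+1}=J_{\alpha+1}$ (so the image lies in $J_{\alpha+1}/J_\alpha$), should yield a compact multiplication operator on $J/J_\alpha$ with image in $J_{\alpha+1}/J_\alpha$, establishing bicompactness of the quotient. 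For the limit case, the automatic inclusion $\overline{\bigcup_{\alpha<\beta}J_\alpha}\subseteq J_\beta$ may be strict, since the approximants of an element of $J\cap I_\beta$ drawn from $\bigcup I_\alpha$ need not lie in $J$. I would handle this by inserting $\overline{\bigcup_{\alpha<\beta}J_\alpha}$ as an auxiliary chain element and verifying that the supplementary quotient $J_\beta/\overline{\bigcup J_\alpha}$ is also bicompact, via a similar argument exploiting the density $I_\beta=\overline{\bigcup I_\alpha}$.

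The main obstacle will be the norm comparison in step~(b): the canonical injection $J_{\alpha+1}/J_\alpha\to I_{\alpha+1}/I_\alpha$ is continuous and injective but need not be a topological embedding, since the quotient norm on $J_{\alpha+1}/J_\alpha$ (inherited from $J_{\alpha+1}\subseteq A$) generally exceeds the subspace norm pulled back from the bicompact target. Consequently one cannot simply transfer compactness along this injection. The resolution I expect is to work throughout in the ambient algebra $A/I_\alpha$, using Lemma~\ref{lhc3}(ii) to produce compact operators directly there, and to exploit that the resulting restrictions descend to $J_{\alpha+1}/J_\alpha$ with its quotient norm because $J$ is an ideal of $A$ and $J_\alpha=J\cap I_\alpha$. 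Once the chain is built, Proposition~\ref{phc1}(iii)$\Rightarrow$(i) concludes the proof.
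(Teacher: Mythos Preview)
Your plan attacks the corollary through condition~(iii) of Proposition~\ref{phc1}, intersecting a chain for $A$ with $J$. The paper instead verifies condition~(ii) directly: given a surjection $f\colon A\to B$ with $f(J)\ne 0$, it sets $I=\ker f$ and $K=I\cap J$, and argues (via the hypocompactness of $A$, Lemma~\ref{lhc2}, and Lemma~\ref{lhc1}) that some $a\in J\setminus I$ has $f(a)\in\mathcal C(B)$. No chain construction inside $J$ is needed, and in particular the two obstacles you raise never arise.

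Your own route has a genuine gap at step~(b) that your sketch does not close. The injection $\iota\colon J_{\alpha+1}/J_\alpha\to I_{\alpha+1}/I_\alpha$ is continuous and injective, but its range $(J_{\alpha+1}+I_\alpha)/I_\alpha$ need not be closed (the sum of two closed ideals is not closed in general), so $\iota$ need not be a topological embedding. Compactness of $\mathrm L_{\bar a}\mathrm R_{\bar b}$ on the bicompact target therefore does not transfer to the source equipped with its own quotient norm. Invoking Lemma~\ref{lhc3}(ii) does not help: that lemma produces compact multiplication operators on the ambient algebra $A/I_\alpha$, whereas what you need is compactness on $J_{\alpha+1}/J_\alpha$ with its own (larger) quotient norm; passing from the former to the latter is precisely the step that fails without $\iota$ being an embedding. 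Your limit-ordinal patch (inserting $\overline{\bigcup_{\alpha<\beta} J_\alpha}$ and claiming the extra quotient is bicompact) is only sketched and runs into the same norm-comparison difficulty. So while the intersection strategy is natural, it does not go through as written, and the paper's approach via condition~(ii) is both shorter and free of these issues.
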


\begin{proof}
Let $A$ be hypocompact and $J$ an ideal of $A$. Let $f:A\longrightarrow B$ be
a continuous surjective homomorphism, $I=\ker~f$ and $K=I\cap J$. Assuming
$f(J)\neq0$, we get that
\[
\mathcal{C}(J/I)\neq0,
\]
whence there is $a\in J$ such that%
\[
0\neq q_{K}(a)\in\mathcal{C}(A/K).
\]
It follows from Lemma \ref{lhc1} that
\[
0\neq q_{I}(a)\in\mathcal{C}(A/I),
\]
so $f(a)$ is a non-zero compact element of $B$ in $f(J)$.
\end{proof}

It follows easily from the definition that a quotient of a hypocompact algebra
by an ideal is hypocompact. So Lemma \ref{lhcext} can be stated in the
classical form of Three Subspaces Theorem:

\textit{Let }$A$ \textit{be a Banach algebra. The following are equivalent.}

\begin{itemize}
\item $A$\textit{ is hypocompact. }

\item $J$\textit{ and }$A/J$\textit{ are hypocompact for every ideal }%
$J$\textit{ of }$A$\textit{.}

\item $J$\textit{ and }$A/J$\textit{ are hypocompact for some ideal }%
$J$\textit{ of }$A$\textit{.}
\end{itemize}

As a consequence, we have the following.

\begin{corollary}
\label{lhclar} In any Banach algebra there is the largest hypocompact ideal.
\end{corollary}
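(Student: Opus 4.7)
The natural candidate for the largest hypocompact ideal is $H = \overline{\sum_{\lambda} J_{\lambda}}$, where $\{J_{\lambda}\}_{\lambda \in \Lambda}$ is the family of all hypocompact ideals of $A$. This is a closed two-sided ideal of $A$ which by construction contains every hypocompact ideal of $A$, so the entire corollary reduces to showing that $H$ itself is hypocompact.

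To prove hypocompactness of $H$, the plan is to verify criterion (ii) of Proposition \ref{phc1} for $H$. Given any continuous surjective homomorphism $f : A \to B$ with $f(H) \neq 0$, I first use continuity of $f$ to write $f(H) \subseteq \overline{\sum_\lambda f(J_\lambda)}$; since the right side cannot be zero, at least one $f(J_{\lambda_0})$ must be nonzero. Applying criterion (ii) in the forward direction to the hypocompact ideal $J_{\lambda_0}$ produces a nonzero element of $f(J_{\lambda_0}) \cap \mathcal{C}(B)$, which sits inside $f(H) \cap \mathcal{C}(B)$. Invoking criterion (ii) in the reverse direction then shows that $H$ is hypocompact, completing the proof.

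The argument is essentially formal once Proposition \ref{phc1}(ii) is in hand, so there is no serious obstacle; the only step that requires a moment's care is the claim that $f(\sum J_\lambda) = 0$ forces $f(H) = 0$, which is immediate by continuity. Should one prefer to avoid the equivalence with (ii), an alternative is to use (iii) of Proposition \ref{phc1} together with Lemma \ref{transext}: recursively enlarge $H_0 = 0$ by lifting a bicompact ideal of $A/H_\alpha$ at each successor step and taking closures at limit stages. For cardinality reasons the chain must stabilize at some ordinal $\gamma$ with no bicompact ideal in $A/H_\gamma$; Lemma \ref{transext} then yields that $H_\gamma$ is hypocompact, and Lemma \ref{lhcext} combined with maximality of the chain ensures that $H_\gamma$ contains every other hypocompact ideal of $A$.
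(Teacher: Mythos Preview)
Your proposal is correct and follows essentially the same route as the paper: define the candidate as the closed linear span of all hypocompact ideals, then verify criterion (ii) of Proposition~\ref{phc1} by observing that if $f(H)\neq 0$ then some individual $f(J_{\lambda_0})\neq 0$, whence hypocompactness of $J_{\lambda_0}$ supplies the required nonzero compact element. The paper phrases this same argument contrapositively (if $f(J)\cap\mathcal{C}(B)=0$ then each $f(I)=0$, hence $f(J)=0$), but the logic is identical.
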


\begin{proof}
Let $J$ be the closed linear span of the union of all hypocompact ideals of a
Banach algebra $A$. We have to prove that the ideal $J$ is hypocompact.

By Proposition \ref{phc1}, it suffices to prove that if $f:A\longrightarrow B$
is a continuous surjective homomorphism with $f(J)\neq0$ then
\[
f(J)\cap\mathcal{C}(B)\neq0.
\]
But if $f(J)\cap\mathcal{C}(B)=0$ then
\[
f(I)=0
\]
for each hypocompact ideal $I$ of $A$. Hence
\[
f(J)=0.
\]

\end{proof}

The largest hypocompact ideal of a Banach algebra $A$ will be denoted by
$\mathcal{R}_{\mathrm{hc}}(A)$.

\begin{lemma}
\label{lhcint} If $J$ is an ideal of $A$ then $\mathcal{R}_{\mathrm{hc}%
}(J)=J\cap\mathcal{R}_{\mathrm{hc}}(A)$.
\end{lemma}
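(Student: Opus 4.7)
The plan is to establish the two inclusions $\mathcal{R}_{\mathrm{hc}}(J) \subseteq J \cap \mathcal{R}_{\mathrm{hc}}(A)$ and $J \cap \mathcal{R}_{\mathrm{hc}}(A) \subseteq \mathcal{R}_{\mathrm{hc}}(J)$ separately. The second is straightforward: $J \cap \mathcal{R}_{\mathrm{hc}}(A)$ is a closed two-sided ideal of the hypocompact algebra $\mathcal{R}_{\mathrm{hc}}(A)$, hence hypocompact by Corollary \ref{chcid}; since it is also a closed two-sided ideal of $J$, it must lie inside the largest hypocompact ideal of $J$, which by definition is $\mathcal{R}_{\mathrm{hc}}(J)$.

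The first inclusion reduces, via $\mathcal{R}_{\mathrm{hc}}(J) \subseteq J$, to showing $\mathcal{R}_{\mathrm{hc}}(J) \subseteq \mathcal{R}_{\mathrm{hc}}(A)$. The subtle point is that $\mathcal{R}_{\mathrm{hc}}(J)$ is a priori only an ideal of $J$, while in order to compare it with $\mathcal{R}_{\mathrm{hc}}(A)$ one needs it to sit inside $A$ as a hypocompact ideal of $A$. My strategy is to construct, by transfinite recursion, a transfinite increasing chain $(I_{\alpha})_{\alpha \le \gamma}$ of ideals \emph{of $A$}, each contained in $\mathcal{R}_{\mathrm{hc}}(J)$, with $I_0 = 0$, $I_\gamma = \mathcal{R}_{\mathrm{hc}}(J)$, and each quotient $I_{\alpha+1}/I_\alpha$ bicompact. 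Once such a chain exists, Proposition \ref{phc1}(iii) at once yields that $\mathcal{R}_{\mathrm{hc}}(J)$ is a hypocompact ideal of $A$ and so is contained in $\mathcal{R}_{\mathrm{hc}}(A)$ by Corollary \ref{lhclar}.

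The main obstacle is the successor step of this construction: given an ideal $I_\alpha$ of $A$ properly contained in $\mathcal{R}_{\mathrm{hc}}(J)$, I must produce a strictly larger ideal $I_{\alpha+1}$ of $A$, still contained in $\mathcal{R}_{\mathrm{hc}}(J)$, with bicompact quotient. Since $\mathcal{R}_{\mathrm{hc}}(J)$ is hypocompact and quotients of hypocompact algebras are hypocompact, the non-zero algebra $\mathcal{R}_{\mathrm{hc}}(J)/I_\alpha$ contains a non-zero compact element. Regarding $\mathcal{R}_{\mathrm{hc}}(J)/I_\alpha$ as an ideal of the larger algebra $A/I_\alpha$ and applying Lemma \ref{lhc2}, I upgrade that element to a non-zero $\bar{z} \in \mathcal{R}_{\mathrm{hc}}(J)/I_\alpha$ which is compact \emph{as an element of the full algebra} $A/I_\alpha$; this passage from compactness inside the smaller algebra to compactness inside the ambient algebra is the crucial use of Lemma \ref{lhc2}. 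The first assertion of Lemma \ref{lhc3} then gives that the ideal of $A/I_\alpha$ generated by $\bar{z}$ is bicompact, and its preimage $I_{\alpha+1}$ in $A$ is automatically contained in $\mathcal{R}_{\mathrm{hc}}(J)$ because $\mathcal{R}_{\mathrm{hc}}(J)/I_\alpha$ is an ideal of $A/I_\alpha$.

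Limit stages are handled by the prescribed closure-of-union operation (which preserves being an ideal of $A$ contained in $\mathcal{R}_{\mathrm{hc}}(J)$), and Zorn's lemma applied to the poset of such chains yields a maximal one; the successor step above forces any maximal chain to terminate exactly at $\mathcal{R}_{\mathrm{hc}}(J)$, which completes the argument.
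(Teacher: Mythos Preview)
There is a gap in your successor step. You twice invoke that $\mathcal{R}_{\mathrm{hc}}(J)/I_\alpha$ is an ideal of $A/I_\alpha$: once to apply Lemma~\ref{lhc2} directly, and once to conclude that the $A/I_\alpha$-ideal generated by $\bar z$ stays inside $\mathcal{R}_{\mathrm{hc}}(J)/I_\alpha$. But $\mathcal{R}_{\mathrm{hc}}(J)$ is a priori only an ideal of $J$; an ideal of an ideal need not be an ideal of the ambient algebra. In fact the assertion ``$\mathcal{R}_{\mathrm{hc}}(J)$ is an ideal of $A$'' is equivalent to the lemma itself: once granted, $\mathcal{R}_{\mathrm{hc}}(J)$ is a hypocompact ideal of $A$ and therefore lies in $\mathcal{R}_{\mathrm{hc}}(A)$ with no further work, so the transfinite construction would be superfluous. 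As written, the argument is circular.

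Both uses can be repaired, though not for the reason you give. For the first, iterate the \emph{proof} of Lemma~\ref{lhc2}: apply it once to the inclusion $\mathcal{R}_{\mathrm{hc}}(J)/I_\alpha \subset J/I_\alpha$ and then to $J/I_\alpha \subset A/I_\alpha$, checking at each stage that the element produced (of the form $ab$ with $a$ already in hand and $b$ in the larger ideal) remains in $\mathcal{R}_{\mathrm{hc}}(J)/I_\alpha$ because the latter is an ideal of $J/I_\alpha$. For the second, drop the claim entirely: the $A/I_\alpha$-ideal generated by $\bar z$ lies only in $J/I_\alpha$, but the chain $(I_\beta)_{\beta\le\alpha+1}$ already witnesses via Proposition~\ref{phc1} that $I_{\alpha+1}$ is hypocompact, and since $I_{\alpha+1}\subset J$ is an ideal of $A$ it is also a hypocompact ideal of $J$, hence lies in $\mathcal{R}_{\mathrm{hc}}(J)$ after all. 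The paper bypasses all of this with a short Andrunakievich-type device: letting $I=\operatorname*{span}(A^{1}\mathcal{R}_{\mathrm{hc}}(J)A^{1})$ be the ideal of $A$ generated by $\mathcal{R}_{\mathrm{hc}}(J)$, one has $I^{3}\subset J\,\mathcal{R}_{\mathrm{hc}}(J)\,J\subset\mathcal{R}_{\mathrm{hc}}(J)$, so $I^{3}$ is hypocompact while $I/I^{3}$ is trivially bicompact (all $\mathrm{L}_a\mathrm{R}_b$ vanish there); Lemma~\ref{lhcext} then makes $I$ hypocompact, whence $\mathcal{R}_{\mathrm{hc}}(J)\subset I\subset\mathcal{R}_{\mathrm{hc}}(A)$.
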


\begin{proof}
By Corollary \ref{chcid}, the ideal $J\cap\mathcal{R}_{\mathrm{hc}}(A)$ of $J$
is hypocompact so it is contained in $\mathcal{R}_{\mathrm{hc}}(J)$. We have
to prove the converse inclusion.

Let $I=\operatorname*{span}(A^{1}\mathcal{R}_{\mathrm{hc}}(J)A^{1})$ be the
ideal of $A$ generated by $\mathcal{R}_{\mathrm{hc}}(J)$. Then
\begin{align*}
I^{3}  &  =\operatorname*{span}\left(  \left(  A^{1}\mathcal{R}_{\mathrm{hc}%
}(J)A^{1}A^{1}\right)  \mathcal{R}_{\mathrm{hc}}(J)\left(  A^{1}%
A^{1}\mathcal{R}_{\mathrm{hc}}(J)A^{1}\right)  \right) \\
&  \subset\operatorname*{span}\left(  J\mathcal{R}_{\mathrm{hc}}(J)J\right)
\subset\mathcal{R}_{\mathrm{hc}}(J).
\end{align*}
Hence $I^{3}$ is a hypocompact ideal. But $I/I^{3}$ is bicompact because
$L_{a}R_{b}=0$ for every $a,b\in I/I^{3}$. By Lemma \ref{lhcext}, $I$ is
hypocompact. Then
\[
I\subset\mathcal{R}_{\mathrm{hc}}(A)
\]
and
\[
\mathcal{R}_{\mathrm{hc}}(J)\subset\mathcal{R}_{\mathrm{hc}}(A).
\]

\end{proof}

\begin{lemma}
\label{lhccel} The algebra $A/\mathcal{R}_{\mathrm{hc}}(A)$ has no hypocompact
ideals and compact elements.
\end{lemma}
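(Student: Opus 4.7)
The plan is to deduce both assertions from the maximality of $\mathcal{R}_{\mathrm{hc}}(A)$ together with the extension property (Lemma~\ref{lhcext}) and the fact that a non-zero compact element generates a bicompact, hence hypocompact, non-zero ideal (Lemma~\ref{lhc3}).

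First I would address the claim that $B := A/\mathcal{R}_{\mathrm{hc}}(A)$ has no non-zero hypocompact ideal. Let $q\colon A\longrightarrow B$ be the quotient map and suppose for contradiction that $\bar{I}$ is a non-zero hypocompact ideal of $B$. Then $I := q^{-1}(\bar{I})$ is a closed ideal of $A$ containing $\mathcal{R}_{\mathrm{hc}}(A)$ with $I/\mathcal{R}_{\mathrm{hc}}(A)\cong\bar{I}$ hypocompact. Since $\mathcal{R}_{\mathrm{hc}}(A)$ is itself hypocompact, Lemma~\ref{lhcext} forces $I$ to be hypocompact. But then $I\subset\mathcal{R}_{\mathrm{hc}}(A)$ by the definition of $\mathcal{R}_{\mathrm{hc}}(A)$ as the largest hypocompact ideal, giving $\bar{I}=0$, a contradiction.

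Next I would handle compact elements. Suppose $\bar{a}\in\mathcal{C}(B)$ is non-zero. By the first part of Lemma~\ref{lhc3}, the ideal $\mathcal{J}(\bar{a})$ generated by $\bar{a}$ in $B$ is bicompact; since $\bar{a}\in\mathcal{J}(\bar{a})$ (via the adjoined unit), $\mathcal{J}(\bar{a})$ is non-zero. Every bicompact algebra is hypocompact, so $\mathcal{J}(\bar{a})$ is a non-zero hypocompact ideal of $B$, contradicting what was just proved.

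No step looks genuinely hard: the only thing to be careful about is that the preimage construction really gives a closed ideal of $A$ containing $\mathcal{R}_{\mathrm{hc}}(A)$ with the right quotient, so that Lemma~\ref{lhcext} can be applied. The logical order matters: the compact-element statement is obtained \emph{after} ruling out non-zero hypocompact ideals, because the argument funnels the compact element into a non-zero hypocompact ideal of $B$.
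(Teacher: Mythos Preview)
Your argument is correct and matches the paper's proof essentially line for line: pull back a hypothetical hypocompact ideal, apply Lemma~\ref{lhcext} to contradict maximality, then reduce a non-zero compact element to a non-zero bicompact (hence hypocompact) ideal. The only cosmetic difference is that the paper cites Lemma~\ref{lhc2} for the second step whereas you invoke Lemma~\ref{lhc3}; your citation is in fact the pertinent one, since it is Lemma~\ref{lhc3} that produces a bicompact ideal from a compact element.
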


\begin{proof}
If $J$ is a hypocompact ideal of $A/\mathcal{R}_{\mathrm{hc}}(A)$ then, by
Lemma \ref{lhcext}, its preimage $J_{1}=\left\{  x\in A:q_{\mathcal{R}%
_{\mathrm{hc}}(A)}(x)\in J\right\}  $ is a hypocompact ideal of $A$ strictly
containing $\mathcal{R}_{\mathrm{hc}}(A)$, a contradiction.

By Lemma \ref{lhc2}, if $A/\mathcal{R}_{\mathrm{hc}}(A)$ has compact elements
then it has bicompact ideals.
\end{proof}

\begin{theorem}
\label{thcrad} The map $A\longmapsto\mathcal{R}_{\mathrm{hc}}(A)$ is a
hereditary topological radical.
\end{theorem}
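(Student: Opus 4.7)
My plan is to verify directly the three axioms $(\mathrm{R1})$, $(\mathrm{R2})$, $(\mathrm{R3})$ of a hereditary topological radical, and to observe that each of them has already been essentially reduced to a preceding result of this subsection.

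For $(\mathrm{R2})$, the equality $\mathcal{R}_{\mathrm{hc}}(J)=J\cap\mathcal{R}_{\mathrm{hc}}(A)$ for an ideal $J$ of $A$ is exactly the content of Lemma \ref{lhcint}. For $(\mathrm{R1})$, Lemma \ref{lhccel} states that $A/\mathcal{R}_{\mathrm{hc}}(A)$ has no non-zero hypocompact ideals, and since $\mathcal{R}_{\mathrm{hc}}(A/\mathcal{R}_{\mathrm{hc}}(A))$ is by definition the largest such ideal (Corollary \ref{lhclar}), it must vanish.

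The substantive step is $(\mathrm{R3})$. Given a continuous surjective homomorphism $f\colon A\to B$, I will set $J=\mathcal{R}_{\mathrm{hc}}(A)$ and $\tilde J=\overline{f(J)}$, which is a closed ideal of $B$ because $f$ is surjective. The plan is to show that $\tilde J$ is hypocompact, for then $\tilde J\subset\mathcal{R}_{\mathrm{hc}}(B)$ by maximality, and in particular $f(J)\subset\mathcal{R}_{\mathrm{hc}}(B)$, as required. To prove that $\tilde J$ is hypocompact I will verify condition (ii) of Proposition \ref{phc1}. So let $g\colon B\to C$ be any continuous surjective homomorphism of Banach algebras; then $g\circ f\colon A\to C$ is continuous and surjective, and the hypocompactness of $J$ together with Proposition \ref{phc1}(ii) supplies the dichotomy: either $(g\circ f)(J)=0$, or $(g\circ f)(J)\cap\mathcal{C}(C)\neq 0$. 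In the first case, $f(J)\subset\ker g$, so $g(\tilde J)=0$ because $\ker g$ is closed. In the second, $g(\tilde J)\supset g(f(J))$ meets $\mathcal{C}(C)$ non-trivially. Either way, $\tilde J$ satisfies condition (ii), and is therefore hypocompact.

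The main conceptual difficulty has already been resolved upstream, in Proposition \ref{phc1}, where the intrinsic notion of hypocompactness is shown to coincide with the external characterization through arbitrary continuous surjective homomorphisms. Once that equivalence is available, $(\mathrm{R3})$ becomes a near-tautology via composition of surjections, and the only small point I need to watch is that the characterization is to be applied not to the (possibly non-closed) image $f(J)$ itself but to its norm closure $\tilde J$.
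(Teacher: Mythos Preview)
Your proof is correct and essentially follows the paper's approach: $(\mathrm{R1})$ and the ideal property both reduce to Lemmas~\ref{lhccel} and~\ref{lhcint} exactly as you say, and for the homomorphism property both you and the paper invoke Proposition~\ref{phc1}(ii) applied to $\mathcal{R}_{\mathrm{hc}}(A)$ along a composite surjection. The only cosmetic difference is that the paper composes $f$ with the single quotient map $q_{\mathcal{R}_{\mathrm{hc}}(B)}\colon B\to B/\mathcal{R}_{\mathrm{hc}}(B)$ and uses Lemma~\ref{lhccel} to rule out the ``compact element'' branch directly, whereas you quantify over all $g$ to first establish that $\overline{f(\mathcal{R}_{\mathrm{hc}}(A))}$ is itself hypocompact and then appeal to maximality; this is a harmless detour.
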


\begin{proof}
(R1) Since $A/\mathcal{R}_{\mathrm{hc}}(A)$ has no hypocompact ideals, we have
that%
\[
\mathcal{R}_{\mathrm{hc}}(A/\mathcal{R}_{\mathrm{hc}}(A))=0.
\]

(R2) Let $f:A\rightarrow B$ be a continuous surjective homomorphism. Denote
$q_{\mathcal{R}_{\mathrm{hc}}(B)}$ by $q$, for brevity. Clearly $q{\circ}f$ is
a continuous surjective homomorphism of $A$ to $B/\mathcal{R}_{\mathrm{hc}%
}(B)$. Since $\mathcal{R}_{\mathrm{hc}}(A)$ is hypocompact, $q{\circ
}f(\mathcal{R}_{\mathrm{hc}}(A))$ is either zero or contains a compact element
of $B$. But the latter is impossible by Lemma \ref{lhccel}. Hence
\[
q{\circ}f(\mathcal{R}_{\mathrm{hc}}(A))=0
\]
and
\[
f(\mathcal{R}_{\mathrm{hc}}(A))\subset\mathcal{R}_{\mathrm{hc}}(B).
\]

(R3) This was proved in Lemma \ref{lhcint}.
\end{proof}

\subsection{The radical $\operatorname*{Rad}\wedge\mathcal{R}_{\mathrm{hc}}$}

Starting with a family of radicals, one can obtain some new ones. The
following construction has a well known analogue in the theory of algebraic
radicals on rings. Let $P_{1}$ and $P_{2}$ be hereditary topological radicals.
For any Banach algebra $A$, put
\[
P_{0}(A)=P_{1}(A)\cap P_{2}(A).
\]

\begin{theorem}
\label{wedge} The map $A\longmapsto P_{0}(A)$ is a hereditary topological radical.
\end{theorem}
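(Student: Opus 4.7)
The plan is to verify the three axioms (R1), (R2), (R3) in order of increasing difficulty, exploiting the hereditary properties of $P_1$ and $P_2$ together with the extension stability of the classes of $P_i$-radical algebras mentioned after Lemma~\ref{transext}.

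First I would dispatch (R3). If $f:A\to B$ is a continuous surjective homomorphism, then
\[
f(P_{0}(A))=f(P_{1}(A)\cap P_{2}(A))\subset f(P_{1}(A))\cap f(P_{2}(A))\subset P_{1}(B)\cap P_{2}(B)=P_{0}(B),
\]
using (R3) for $P_{1}$ and $P_{2}$ individually.

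Next, (R2). For any ideal $J$ of $A$, apply (R2) to each $P_{i}$:
\[
P_{0}(J)=P_{1}(J)\cap P_{2}(J)=(J\cap P_{1}(A))\cap(J\cap P_{2}(A))=J\cap P_{0}(A).
\]

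The main obstacle is (R1), which requires showing that $P_{0}(A/P_{0}(A))=0$. The key observation I would use is that $P_{0}(A)$ is itself both $P_{1}$-radical and $P_{2}$-radical: indeed, by (R2) for $P_{1}$,
\[
P_{1}(P_{0}(A))=P_{0}(A)\cap P_{1}(A)=P_{0}(A),
\]
since $P_{0}(A)\subset P_{1}(A)$, and symmetrically for $P_{2}$. Now let $\widetilde{A}=A/P_{0}(A)$, write $q$ for the quotient map, set $\widetilde{J}=P_{0}(\widetilde{A})$, and let $J=q^{-1}(\widetilde{J})$. By (R2) applied to $\widetilde{J}$ as an ideal of $\widetilde{A}$, the algebra $\widetilde{J}$ is simultaneously $P_{1}$-radical and $P_{2}$-radical. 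The ideal $J$ sits in a short exact sequence with $P_{0}(A)$ as kernel and $\widetilde{J}=J/P_{0}(A)$ as quotient. Since both $P_{0}(A)$ and $J/P_{0}(A)$ are $P_{1}$-radical, the extension-stability of the class of $P_{1}$-radical algebras (stated after Lemma~\ref{transext}) yields that $J$ is $P_{1}$-radical; the same argument with $P_{2}$ shows $J$ is $P_{2}$-radical. Hence $J\subset P_{1}(A)\cap P_{2}(A)=P_{0}(A)$, so $\widetilde{J}=q(J)=0$, which is exactly (R1).

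The only nontrivial step is (R1), and the crucial ingredient is the extension stability of the classes of $P_{i}$-radical algebras; without it, one cannot pass from the fact that $P_{0}(A)$ and $J/P_{0}(A)$ are radical to the conclusion that $J$ itself is radical.
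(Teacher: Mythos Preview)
Your verification of (R2) and (R3) is identical to the paper's, and your argument for (R1) is correct. The proof is complete as written.

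The only genuine difference lies in (R1). The paper argues more directly: setting $D=A/P_{0}(A)$, it observes that the canonical surjection $q_{1}:D\to A/P_{1}(A)$ satisfies $q_{1}(P_{1}(D))\subset P_{1}(A/P_{1}(A))=0$ by (R3) and (R1) for $P_{1}$, so $P_{1}(D)\subset\ker q_{1}=P_{1}(A)/P_{0}(A)$; symmetrically $P_{2}(D)\subset P_{2}(A)/P_{0}(A)$, and intersecting gives $P_{0}(D)\subset P_{0}(A)/P_{0}(A)=0$. This uses only the three axioms for $P_{1}$, $P_{2}$ and nothing else. Your route instead pulls $P_{0}(D)$ back to an ideal $J$ of $A$ and invokes the extension stability of the classes of $P_{i}$-radical algebras to force $J\subset P_{0}(A)$. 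That stability is itself a consequence of the axioms (as the paper notes after Lemma~\ref{transext}), so your argument is one layer less elementary, but it is conceptually clean and highlights that $P_{0}$-radicality is preserved under extensions essentially because each $P_{i}$-radicality is. Either approach is perfectly adequate here.
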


\begin{proof}
Set $D=A/P_{0}(A)$. There is a natural surjective homomorphism $q_{1}%
:D\rightarrow A/P_{1}(A)$ defined by
\[
q_{1}\left(  a/P_{0}(A)\right)  =a/P_{1}(A).
\]
Since $P_{1}(A/P_{1}(A))=0$, we have that%
\[
q_{1}(P_{1}(D))=0
\]
and
\[
P_{1}(D)\subset\ker(q_{1}).
\]
Therefore, if an element $a/P_{0}(A)$ of $D$ belongs to the kernel of the
homomorphism $q_{1}$, then $a\in P_{1}(A)$. We get that
\[
P_{1}(D)\subset\{a/P_{0}(A):a\in P_{1}(A)\}.
\]
Similarly,
\[
P_{2}(D)\subset\{a/P_{0}(A):a\in P_{2}(A)\}.
\]
Hence
\[
P_{0}(D)\subset\left\{  a/P_{0}(A):a\in P_{1}(A)\cap P_{2}(A)\right\}
=P_{0}(A)/P_{0}(A).
\]
In other words,
\[
P_{0}(D)=0.
\]
We proved that $P_{0}$ satisfies condition (R1).

If $f:A\longrightarrow B$ is a continuous surjective homomorphism then
\[
f(P_{0}(A))\subset f\left(  P_{1}\left(  A\right)  \right)  \subset P_{1}(B)
\]
and, similarly,
\[
f(P_{0}(A))\subset P_{2}(B).
\]
Hence
\[
f(P_{0}(A))\subset P_{0}(B),
\]
therefore $P_{0}$ satisfies (R2).

For an ideal $J\subset A$, one has
\[
P_{0}(J)=P_{1}(J)\cap P_{2}(J)=\left(  P_{1}(A)\cap J\right)  \cap\left(
P_{2}(A)\cap J\right)  =P_{0}(A)\cap J.
\]
We proved (R3).
\end{proof}

The radical $P_{0}$ constructed in the previous theorem is denoted by
$P_{1}\wedge P_{2}$. We will consider the radical $\operatorname*{Rad}%
\wedge\mathcal{R}_{\mathrm{hc}}$.

Let us introduce a standard order in the class of all topological radicals by
the rule
\[
P_{1}\leq P_{2}%
\]
if $P_{1}(A)\subset P_{2}(A)$ for every Banach algebra $A$. One can show that
$P_{1}\wedge P_{2}$\textit{ is the largest hereditary topological radical }%
$P$\textit{ having the property }$P\leq P_{1}$\textit{ and }$P\leq P_{2}%
$\textit{,} but we needn't it here.

The following result shows in particular that for compact (more generally, for
hypocompact) algebras the radical $\mathcal{R}_{\mathrm{cq}}$ coincides with
the Jacobson radical.

\begin{theorem}
\label{order} $\operatorname*{Rad}\wedge\mathcal{R}_{\mathrm{hc}}%
\leq\mathcal{R}_{\mathrm{cq}}$.
\end{theorem}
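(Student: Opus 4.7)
The inequality $\operatorname{Rad}\wedge\mathcal{R}_{\mathrm{hc}}\leq\mathcal{R}_{\mathrm{cq}}$ asserts that for every Banach algebra $A$, the ideal $J:=\operatorname{Rad}(A)\cap\mathcal{R}_{\mathrm{hc}}(A)$ lies in $\mathcal{R}_{\mathrm{cq}}(A)$. My plan is to show that $J$ itself is $\mathcal{R}_{\mathrm{cq}}$-radical, which suffices since $\mathcal{R}_{\mathrm{cq}}$ is hereditary. Heredity of $\operatorname{Rad}$ and $\mathcal{R}_{\mathrm{hc}}$ forces $J$ to be simultaneously Jacobson-radical and hypocompact, so the whole problem reduces to the Banach-algebraic assertion: \emph{every hypocompact Jacobson-radical Banach algebra $B$ is compactly quasinilpotent.}

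To attack this I would peel off the hypocompact structure transfinitely. By Proposition \ref{phc1}(iii) one can fix a transfinite chain $(J_{\alpha})_{\alpha\leq\gamma}$ of ideals of $B$ with $J_{1}=0$, $J_{\gamma}=B$, and every successor quotient $J_{\alpha+1}/J_{\alpha}$ bicompact. Each $J_{\alpha+1}$ is an ideal of the Jacobson-radical algebra $B$, hence Jacobson-radical, and so is the quotient $J_{\alpha+1}/J_{\alpha}$. Lemma \ref{transext} applied with $P=\mathcal{R}_{\mathrm{cq}}$ then reduces the task to showing that \emph{every bicompact Jacobson-radical Banach algebra $C$ is compactly quasinilpotent.}

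For that last statement, fix a precompact $M\subset C$. The map $(a,b)\mapsto\mathrm{L}_{a}\mathrm{R}_{b}$ is norm-continuous from $C\times C$ into $\mathcal{B}(C)$, so $\mathrm{L}_{M}\mathrm{R}_{M}$ is precompact in $\mathcal{B}(C)$; by bicompactness of $C$ every one of its elements is a compact operator. Since $\mathrm{L}_{a}$ and $\mathrm{R}_{b}$ commute,
\[
\rho(\mathrm{L}_{a}\mathrm{R}_{b})\leq\rho(\mathrm{L}_{a})\rho(\mathrm{R}_{b})\leq\rho(a)\rho(b)=0
\]
for all $a,b\in C$ (as $C$ is Jacobson-radical). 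Using the identity $(\mathrm{L}_{M}\mathrm{R}_{M})^{n}=\mathrm{L}_{M^{n}}\mathrm{R}_{M^{n}}$ from the proof of Lemma \ref{pass}, the same vanishing occurs at every level, whence $r(\mathrm{L}_{M}\mathrm{R}_{M})=0$. Invoking the Berger-Wang formula of \cite{ShT2000} for precompact families of compact operators gives $\rho(\mathrm{L}_{M}\mathrm{R}_{M})=0$, and Lemma \ref{pass}(i) then forces $\rho(M)=0$.

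The main obstacle is the bicompact case: it is where the argument must call on the genuinely analytic Berger-Wang theorem for compact operators from \cite{ShT2000}. Everything else is algebraic bookkeeping organized by the hereditary topological radical formalism (heredity of the three radicals $\operatorname{Rad}$, $\mathcal{R}_{\mathrm{hc}}$, $\mathcal{R}_{\mathrm{cq}}$, together with the transfinite extension Lemma \ref{transext} and the structural description of hypocompact algebras in Proposition \ref{phc1}).
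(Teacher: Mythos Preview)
Your proposal is correct and follows essentially the same route as the paper: reduce via heredity and the transfinite chain of Proposition~\ref{phc1}(iii), invoke Lemma~\ref{transext} for $\mathcal{R}_{\mathrm{cq}}$, and in the base bicompact Jacobson-radical case apply the Berger--Wang formula for precompact families of compact operators to $\mathrm{L}_M\mathrm{R}_M$ together with Lemma~\ref{pass}. The only cosmetic difference is that the paper gets $r(\mathrm{L}_M\mathrm{R}_M)=0$ directly from Lemma~\ref{pass}(ii) and $r(M)=0$, rather than recomputing it elementwise as you do.
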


\begin{proof}
Let us first show that each bicompact Jacobson radical algebra $A$ is
compactly quasinilpotent. Indeed, if $M$ is a precompact family in $A$ then
\[
r(M)=0
\]
because $A$ consists of quasinilpotent elements. For $N=\mathrm{L}%
_{M}\mathrm{R}_{M}$, we see from Lemma \ref{pass} that
\[
r(N)=0.
\]
Since $N$ is a precompact family of compact operators,
\[
\rho(N)=0
\]
by the Berger-Wang formula. Again, we obtain from Lemma \ref{pass} that
\[
\rho(M)=0.
\]

Let now $A$ be an arbitrary Banach algebra and $J=\operatorname*{Rad}%
(A)\cap\mathcal{R}_{\mathrm{hc}}(A)$. Since $J$ is a hypocompact ideal then,
by Proposition \ref{phc1}, there is a transfinite increasing chain
$(J_{\alpha})_{\alpha\leq\gamma}$ of ideals such that
\[
J_{\gamma}=J
\]
and all $J_{\gamma+1}/J_{\gamma}$ are bicompact. Since all bicompact Jacobson
radical ideals are compactly quasinilpotent, all $J_{\gamma+1}/J_{\gamma}$ are
$\mathcal{R}_{\mathrm{cq}}$-radical. By the transfinite extension property
(see Lemma \ref{transext}), $J$ is $\mathcal{R}_{\mathrm{cq}}$-radical.
\end{proof}

\section{Main results}

\subsection{Mixed\ GBWF}

We will prove  for any precompact set $M$ of elements in a Banach algebra $A$
that
\begin{equation}
\rho(M)=\max\{\rho^{\chi}(M),r(M)\},\label{e2}%
\end{equation}
where as above we set
\[
\rho^{\chi}(M)=\rho_{\chi}(\mathrm{L}_{M}\mathrm{R}_{M})^{1/2}.
\]

Note that it suffices to prove this result under the assumption that $A$ is
generated by $M$ as a Banach algebra. Indeed, $\rho(M)$ and $r(M)$ do not
change if calculated in the closed subalgebra $B=\mathcal{A}(M)$ generated by
$M$. The value $\rho^{\chi}(M)$ in this case cannot increase because the
multiplication operators act on a smaller space. But the nontrivial inequality
in (\ref{e2}) is only $\leq$.

So we may assume in what follows that $A=\mathcal{A}(M)$. A semigroup $G$ of elements of a Banach algebra is
called a \textit{Radjavi semigroup} ($R$\textit{-semigroup }for brevity) it ${\lambda}a\in G$ for every $a\in G$
and $\lambda\geq0$.

Let $G=\mathcal{S}(M)$ be the semigroup generated by a set $M$ of operators.
Then
\[
G=\cup_{n=1}^{\infty}M^{n}.
\]
An operator $T\in M^{n}$ is called \textit{leading} (more precisely,
$n$-\textit{leading}) if
\[
\left\Vert T\right\Vert \geq\left\Vert S\right\Vert
\]
for all $S\in\cup_{k\leq n}M^{k}$. Note that an operator may be in the
different $M^{n}$'s, this justifies the more precise term `$n$-leading
operator', but we write just 'leading' when it is clear which $n$ is meant.
A \textit{leading sequence} in $G$ is a sequence that consists of leading
operators $T_{k}\in M^{n\left(  k\right)  }$ for $n(k)\rightarrow\infty$.

Let $\mathcal{S}_{+}\left(  M\right)  $ be the $R$-semigroup generared by $M$.
Clearly
\[
\mathcal{S}_{+}\left(  M\right)  =\mathbb{R}_{+}\mathcal{S}(M),
\]
where $\mathbb{R}_{+}=\{t\in\mathbb{R}:t\geq0\}$.

The following lemma was proved in \cite[see Theorem 6.10]{ShT2000}.

\begin{lemma}
\label{L3} Let $N$ be a precompact set of operators. Suppose that $\rho_{\chi
}(N)<\rho(N)=1$ and $\mathcal{S}(N)$ is unbounded. Then there is a sequence
$T_{n}\in\mathcal{S}_{+}(N)$, with $\Vert T_{n}\Vert=1$, converging to a
compact operator $T$. Moreover, to obtain such a sequence $T_{n}$ it suffices
to take any leading sequence $S_{n}$ in $\mathcal{S}(N)$ and choose a
convergent subsequence from $S_{n}/\Vert S_{n}\Vert$ (it always exists).
\end{lemma}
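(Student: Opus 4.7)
The plan is to build $T_n$ by normalizing large leading products in $\mathcal{S}(N)$, control the Hausdorff measure of noncompactness via the gap $\rho_{\chi}(N) < 1 = \rho(N)$, and extract a norm-convergent subsequence whose limit is automatically compact.

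First, I would produce leading words of diverging norm. Since $\rho(N) = \inf_{n}\|N^{n}\|^{1/n} = 1$, every $\|N^{n}\| \geq 1$, and unboundedness of $\mathcal{S}(N) = \bigcup_{n} N^{n}$ yields $\sup_{n}\|N^{n}\| = \infty$. Thus there is a strictly increasing sequence $n(k) \to \infty$ with $\|N^{n(k)}\| \to \infty$. By precompactness of each $N^{n(k)}$ (a continuous image of the compact set $\overline{N}^{\,n(k)}$), the suprema defining leading operators are attained, and one may pick $n(k)$-leading $S_{k} \in N^{n(k)}$ with $\|S_{k}\| = \max_{j \leq n(k)} \|N^{j}\| \to \infty$. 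Setting $T_{k} = S_{k}/\|S_{k}\| \in \mathcal{S}_{+}(N)$ gives $\|T_{k}\| = 1$.

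Second, I would bound $\|T_{k}\|_{\chi}$ geometrically. Choose $c$ with $\rho_{\chi}(N) < c < 1$; by definition of $\rho_{\chi}$ one has $\|N^{n}\|_{\chi} \leq c^{n}$ for all sufficiently large $n$. Using $\|S_{k}\|_{\chi} \leq \|N^{n(k)}\|_{\chi}$ and $\|S_{k}\| \geq 1$, this yields
\[
\|T_{k}\|_{\chi} \;=\; \|S_{k}\|_{\chi}/\|S_{k}\| \;\leq\; \|N^{n(k)}\|_{\chi} \;\leq\; c^{n(k)} \;\longrightarrow\; 0.
\]

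The main obstacle is then extracting a norm-convergent subsequence of $\{T_{k}\}$. The condition $\|T_{k}\|_{\chi} \to 0$ on a bounded sequence is not by itself sufficient (rank-one projections onto mutually orthogonal lines have $\|\cdot\|_{\chi} = 0$ yet no norm-convergent subsequence), so one must exploit that each $T_{k}$ is proportional to a word in the precompact alphabet $N$. The idea is to factor $S_{k}$ into an initial part and a tail of fixed length $m$, and run a diagonal extraction as $m \to \infty$: for each fixed $m$ the tails, lying in the precompact set $N^{m}$, converge along a subsequence, while the vanishing $\|T_{k}\|_{\chi}$ forces the residual initial factor to contribute only a null correction; the combined limit then coincides, along a suitable subsequence, with the norm limit of $T_{k}$. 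Calling this limit $T$, lower semicontinuity of $\|\cdot\|_{\chi}$ gives $\|T\|_{\chi} \leq \liminf_{k} \|T_{k}\|_{\chi} = 0$, so $T$ is compact.
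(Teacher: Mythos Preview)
The paper does not actually prove this lemma; it simply quotes it from \cite{ShT2000} (Theorem~6.10 there). So there is no in-paper argument to compare against, and your attempt must be judged on its own.

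Your steps are fine up to the point you yourself flag as the ``main obstacle'': constructing a leading sequence with $\|S_k\|\to\infty$, normalizing to $T_k = S_k/\|S_k\|$, and verifying $\|T_k\|_\chi \le c^{n(k)} \to 0$ are all correct, as is the final observation that any norm limit of such $T_k$ must be compact. The gap is entirely in the extraction of a norm-convergent subsequence, and your sketch there does not close it. Writing $S_k = A_k B_k$ with tail $B_k \in N^m$ and passing to a subsequence along which $B_k \to B$ still leaves the factor $A_k/\|S_k\|$, which is merely bounded in norm (by the leading property) and has small $\|\cdot\|_\chi$. Your assertion that ``the vanishing $\|T_k\|_\chi$ forces the residual initial factor to contribute only a null correction'' is exactly the point at issue: that factor contributes a term of norm up to $1$ (indeed $\|T_k\|=1$), and no estimate of $\|T_k - T_j\|$ follows from what you have written. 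A diagonal argument over $m$ does not help unless, for each fixed $m$, you can show that $\{T_k\}_k$ lies within a distance $\varepsilon(m)\to 0$ of some precompact set; you have not done this. Your own example of rank-one projections shows that $\|T_k\|_\chi\to 0$ on a bounded sequence is not enough, and nothing in your factorization paragraph supplies the missing mechanism.

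The argument in \cite{ShT2000} that establishes precompactness of the normalized leading sequence is genuinely nontrivial; it exploits the smallness of $\|N^m\|_\chi$ for large $m$ on the \emph{outer} factor (not the inner one) together with the leading property in a more quantitative way than you indicate. You should consult that reference to see how the total boundedness is actually obtained.
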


\begin{lemma}
\label{L4} Let $A$ be a Banach algebra, $M\subset A$ be precompact and
$N=\mathrm{L}_{M}\mathrm{R}_{M}$. Suppose that $A=\mathcal{A}\left(  M\right)
$, $\rho_{\chi}(N)<\rho(N)=1$ and $\mathcal{S}(N)$ is unbounded. Then the
closure $\overline{\mathcal{S}_{+}(N)}$ contains a non-zero compact operator
$T$ such that

\begin{itemize}
\item[(i)] $\mathrm{L}_{Th}\mathrm{R}_{Tg}$ is compact for every elements
$h,g$ of $A$.

\item[(ii)] If also $r(N)<1$ then $T(A)\subset\operatorname*{Rad}(A)$.
\end{itemize}
\end{lemma}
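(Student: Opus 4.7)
The plan is to invoke Lemma \ref{L3} to produce $T$ as an explicit limit of multiplication-operator approximants, deduce (i) from a single algebraic factorization together with norm-continuity of multiplication, and deduce (ii) from a spectral bound that follows from $r(N)<1$ combined with the semigroup machinery of Lemmas \ref{01} and \ref{semig}.

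In the first step I apply Lemma \ref{L3} to $N$: since $\rho_{\chi}(N)<\rho(N)=1$ and $\mathcal{S}(N)$ is unbounded, a normalized leading sequence $T_n:=U_n/\|U_n\|$, where $U_n=\mathrm{L}_{a_n}\mathrm{R}_{b_n}\in N^{k(n)}$ with $a_n,b_n\in M^{k(n)}$ and $k(n)\to\infty$, has a subsequence converging in norm to a non-zero compact operator $T\in\overline{\mathcal{S}_+(N)}$. Setting $s_n=1/\|U_n\|$, I have $s_n\to 0$ and $T_n=s_n U_n$.

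For (i) I introduce the companion operator $V_n:=\mathrm{L}_{b_n}\mathrm{R}_{a_n}\in N^{k(n)}$, whose norm is dominated by $\|U_n\|$ by the leading property, so $\|s_n V_n\|\le 1$. A direct computation with $(T_n h)x(T_n g)=s_n^{2}a_n h b_n x a_n g b_n=T_n\bigl(h\cdot(s_n b_n x a_n)\cdot g\bigr)$ yields the identity
\[
\mathrm{L}_{T_n h}\mathrm{R}_{T_n g}=T_n\circ\mathrm{L}_h\mathrm{R}_g\circ(s_n V_n).
\]
Replacing $T_n$ by $T$ on the right incurs an error bounded by $\|T-T_n\|\|h\|\|g\|\|s_n V_n\|\to 0$, while the left-hand side converges in norm to $\mathrm{L}_{Th}\mathrm{R}_{Tg}$ by norm-continuity of multiplication. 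Hence $\mathrm{L}_{Th}\mathrm{R}_{Tg}=\lim_{n}T\circ\mathrm{L}_h\mathrm{R}_g\circ(s_n V_n)$; each operator on the right is compact because $T$ is compact and the remaining two factors are uniformly bounded, and the space of compact operators is closed in operator norm.

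For (ii) I use $r(N)<1$ to establish operator-level quasinilpotency: choosing $\varepsilon>0$ with $r(N)+\varepsilon<1$, for large $n$ one has $\rho(U_n)\le(r(N)+\varepsilon)^{k(n)}$, whence $\rho(T_n)=s_n\rho(U_n)\to 0$ and, by upper semicontinuity of the spectral radius, $\rho(T)=0$. Analogous estimates apply to $s_n V_n$ and to $s_n^{2}V_n U_n=s_n^{2}\mathrm{W}_{b_n a_n}\in \mathbb{R}_+N^{2k(n)}$, so every accumulation point of those sequences is also quasinilpotent. Combining this with (i) and its iterates, the multiplicative subsemigroup $G\subset A$ generated by $T(A)\cdot A^{1}$ consists of mutually compact elements (compactness of $\mathrm{L}_{a}\mathrm{R}_{b}$ for products $a,b$ of generators follows from (i) after rearranging the $\mathrm{L}$ and $\mathrm{R}$ factors using their commutativity), and each such element is shown to be quasinilpotent by the operator-level spectral estimate applied to a suitable factorisation through $T$ and $V$. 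Lemma \ref{semig} then yields that $\operatorname{span} G$ consists of quasinilpotents in $A$, so $\rho(xTh)=0$ for all $x,h\in A$, which is the standard characterisation of $Th\in\operatorname{Rad}(A)$. The main obstacle is identifying the factorization used in (i); with that in hand, (i) is immediate, while the real work in (ii) lies in transferring operator-theoretic quasinilpotency of $T$ in $\mathcal{B}(A)$ to the Banach-algebraic containment $T(A)\subset\operatorname{Rad}(A)$ via the semigroup argument built on (i).
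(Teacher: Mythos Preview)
Your treatment of (i) is essentially the paper's: the identity $\mathrm{L}_{T_nh}\mathrm{R}_{T_ng}=T_n\,\mathrm{L}_h\mathrm{R}_g\,(s_nV_n)$ is exactly the paper's formula $\mathrm{L}_{Ph}\mathrm{R}_{Pg}=P\,\mathrm{L}_h\mathrm{R}_g\,P^{\circ}$, and the passage to the limit is identical.

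Part (ii), however, has a real gap. A minor point first: upper semicontinuity of $\rho$ gives $\limsup\rho(T_n)\le\rho(T)$, not the reverse, so it cannot yield $\rho(T)=0$ from $\rho(T_n)\to 0$ (the conclusion is still true, but because $T$ is compact and $\rho$ is continuous at compact operators). The substantive problem is the next step: quasinilpotence of the \emph{operator} $T\in\mathcal{B}(A)$ is not the same as quasinilpotence of the \emph{elements} $(Th)a\in A$, and your sentence ``each such element is shown to be quasinilpotent by the operator-level spectral estimate applied to a suitable factorisation through $T$ and $V$'' does not supply the missing argument. There is no single operator $V$, and even granting $\rho(T)=0$ one cannot conclude $\rho(W_{(Th)a})=0$ without further work; moreover, by taking $h\in A$ arbitrary you lose the possibility of placing $W_h$ inside $\mathcal{S}(N)$, which is where the hypothesis $r(N)<1$ actually bites. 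Finally, your semigroup $G$ is one-sided, so it is not clear that $x(Th)\in\operatorname{span}G$ for general $x\in A$, which you need for the Jacobson-radical characterization.

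The paper closes this gap by staying at the element level and restricting to $\mathcal{S}(M)$: for $u,v,x\in\mathcal{S}(M)$ one has $W_{ua_nxb_nv}=(\mathrm{L}_u\mathrm{R}_v)(\mathrm{L}_{a_n}\mathrm{R}_{b_n})W_x(\mathrm{L}_{b_n}\mathrm{R}_{a_n})(\mathrm{L}_v\mathrm{R}_u)\in\mathcal{S}(N)$, so $r(N)<1$ bounds $\rho(ua_nxb_nv)$ uniformly, and $\lambda_{k_n}\to 0$ forces $\rho(u(Tx)v)=0$. Thus $\overline{\mathcal{S}_+(M)(Tx)\mathcal{S}_+(M)}$ is a semigroup of mutually compact quasinilpotents (it is a semigroup because $Tx\in\overline{\mathcal{S}_+(M)}$); Lemma~\ref{semig} then makes its closed span, namely the ideal $\overline{A(Tx)A}$, quasinilpotent, whence $Tx\in\operatorname{Rad}(A)$. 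Only at the very end does one extend from $x\in\mathcal{S}(M)$ to $x\in A$ by linearity and continuity, which is legitimate since $\operatorname{Rad}(A)$ is a closed subspace.
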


\begin{proof}
All elements in $\mathcal{S}_{+}(N)$ are of the form
\[
P={\lambda}\mathrm{L}_{a}\mathrm{R}_{b},
\]
where $a,b\in\mathcal{S}(M)$ and ${\lambda\geq}0$. For brevity, we will denote
$P^{\circ}$ for any $P$ by
\[
P^{\circ}={\lambda}\mathrm{L}_{b}\mathrm{R}_{a}.
\]
Note that $P^{\circ}$ may be not uniquely determined by $P$, but the equality
\[
\mathrm{L}_{Ph}\mathrm{R}_{Pg}=P\mathrm{L}_{h}\mathrm{R}_{g}P^{\circ}%
\]
holds independently of the choice of $P^{\circ}$, for every $h,g\in M$.

Let $\left(  S_{n}\right)  $ be a leading sequence  in $\mathcal{S}(N)$. For every $S_{n}\in N^{m\left(
n\right)  }$, the operator $S_{n}^{\circ}$ can also be chosen in $N^{m\left(  n\right)  }$, so we may assume
that
\begin{equation}
\Vert S_{n}\Vert\geq\Vert S_{n}^{\circ}\Vert\label{e5}%
\end{equation}
for all $n$. By Lemma \ref{L3}, we may  choose a sequence of operators
\[
T_{n}=S_{k_{n}}/\Vert S_{k_{n}}\Vert
\]
that tends to a compact operator $T$. Note that  all operators
\[
T_{n}^{\circ}=S_{k_{n}}^{\circ}/\Vert S_{k_{n}}\Vert
\]
are contractive by (\ref{e5}). Now for any $h,g\in A$, we have
\[
\mathrm{L}_{Th}\mathrm{R}_{Tg}=\lim_{n\rightarrow\infty}\mathrm{L}{_{T_{n}%
h}\mathrm{R}_{T_{n}g}}=\lim_{n\rightarrow\infty}T_{n}\mathrm{L}_{h}%
\mathrm{R}_{g}T_{n}^{\circ}=\lim_{n\rightarrow\infty}T\mathrm{L}_{h}%
\mathrm{R}_{g}T_{n}^{\circ}.
\]
Hence the operator $\mathrm{L}_{Th}\mathrm{R}_{Tg}$ is a limit of compact
operators, so it is compact. Part (i) is proved.

Let $r(N)<1$, and let us now prove that $u(Tx)v$ is quasinilpotent for every
$u,v,x\in\mathcal{S}(M)$. By our construction,
\[
T=\lim_{n\rightarrow\infty}{\lambda}_{k_{n}}S_{k_{n}},
\]
where ${\lambda}_{k_{n}}=\Vert S_{k_{n}}\Vert^{-1}\rightarrow0$ as
$n\rightarrow\infty$, $S_{k_{n}}=\mathrm{L}_{a_{n}}\mathrm{R}_{b_{n}}$ for
some $a_{n},b_{n}\in\mathcal{S}(M)$. Since%
\[
\mathrm{L}_{\mathcal{S}(M)}\mathrm{R}_{\mathcal{S}(M)}=\mathcal{S}\left(
N\right)  ,
\]
we have that%
\[
\mathrm{W}_{ua_{n}xb_{n}v}=\left(  \mathrm{L}_{u}\mathrm{R}_{v}\right)
\left(  \mathrm{L}_{a_{n}}\mathrm{R}_{b_{n}}\right)  \mathrm{W}_{x}\left(
\mathrm{L}_{b_{n}}\mathrm{R}_{a_{n}}\right)  \left(  \mathrm{L}_{v}%
\mathrm{R}_{u}\right)  \in\mathcal{S}\left(  N\right)  .
\]
Since $r(N)<1$ implies that $\{\rho(S):S\in\mathcal{S}(N)\}$ is bounded, we
obtain that
\[
\rho(u(Tx)v)=\lim_{n\rightarrow\infty}{\lambda}_{k_{n}}\rho(ua_{n}%
xb_{n}v)=\lim_{n\rightarrow\infty}{\lambda}_{k_{n}}\rho(\mathrm{W}%
_{ua_{n}xb_{n}v})^{1/2}=0,
\]
Thus we see that the set $\mathcal{S}_{+}(M)(Tx)\mathcal{S}_{+}(M)$ consists
of mutually compact quasinilpotent elements of $A$ for every $x\in
\mathcal{S}(M)$. So does the closure $\overline{\mathcal{S}_{+}%
(M)(Tx)\mathcal{S}_{+}(M)}$. Since
\begin{align*}
Tx  & \in\overline{\mathcal{S}_{+}(N)}\mathcal{S}(M)=\overline{\mathrm{L}%
_{\mathcal{S}_{+}(M)}\mathrm{R}_{\mathcal{S}_{+}(M)}}\mathcal{S}%
(M)\subset\overline{\mathcal{S}_{+}(M)\mathcal{S}(M)\mathcal{S}_{+}(M)}\\
& \subset\overline{\mathcal{S}_{+}(M)},
\end{align*}
the set $\overline{\mathcal{S}_{+}(M)(Tx)\mathcal{S}_{+}(M)}$ is a semigroup.
By Lemma \ref{semig}, its closed linear span $J$ also consists of compact
quasinilpotent elements. Since $J$ coincides with the ideal $\overline
{A(Tx)A}$ of $A$, we have that
\[
A(Tx)A\subset\operatorname*{Rad}(A),
\]
whence, by the quasi-regular characterization of the Jacobson radical,
\[
A(Tx)\subset\operatorname*{Rad}(A)
\]
and also
\[
Tx\in\operatorname*{Rad}(A).
\]
Since $A=\operatorname*{span}\mathcal{S}(M)$, we obtain that
\[
T(A)\subset\operatorname*{Rad}(A).
\]

\end{proof}

Let us call any closed bicompact ideal that consists of quasinilpotent
operators a $qb$\textit{-ideal}. The above lemma implies the following result.

\begin{corollary}
\label{imcor} If  $~\max\left\{  \rho^{\chi}\left(  M\right)  ,r\left(
M\right)  \right\}  <\rho(M)=1$ and the semigroup $\mathcal{S}(\mathrm{L}%
_{M}\mathrm{R}_{M})$ is unbounded then ${\mathcal{A}(M)}$ has a non-zero $qb$-ideal.
\end{corollary}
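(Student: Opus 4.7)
The plan is to reduce to Lemma \ref{L4} and then take the closed two-sided ideal of $A$ generated by the range of the compact operator $T$ furnished there.

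First I would translate the hypotheses on $M$ into hypotheses on $N=\mathrm{L}_{M}\mathrm{R}_{M}$ via Lemma \ref{pass}: from $\rho(M)=1$ and $r(M)<1$ we obtain $\rho(N)=1$ and $r(N)<1$, while $\rho^{\chi}(M)<1$ is by definition $\rho_{\chi}(N)<1$. Combined with the assumption that $\mathcal{S}(N)$ is unbounded, the full hypotheses of Lemma \ref{L4} are in force, and we may as well replace $A$ by $\mathcal{A}(M)$ from the outset (since $\rho(M)$, $r(M)$ and $\rho^{\chi}(M)$ do not grow when the ambient algebra is enlarged). Lemma \ref{L4} then produces a non-zero compact operator $T\in\overline{\mathcal{S}_{+}(N)}$ on $A$ such that $\mathrm{L}_{Th}\mathrm{R}_{Tg}$ is compact for all $h,g\in A$ and $T(A)\subset\operatorname*{Rad}(A)$. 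Since $T\neq 0$, the set $T(A)\subset A$ is also non-zero.

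Now let $J=\overline{\operatorname*{span}(A^{1}T(A)A^{1})}$ be the closed two-sided ideal of $A$ generated by $T(A)$. Then $J\neq 0$, and $J\subset\operatorname*{Rad}(A)$ because $T(A)\subset\operatorname*{Rad}(A)$ and the Jacobson radical is a closed two-sided ideal; in particular every element of $J$ is quasinilpotent. The remaining step is to check that $J$ is bicompact. For typical generators $b=u(Tx)v$ and $c=u'(Ty)v'$ of the algebraic ideal (with $u,v,u',v'\in A^{1}$ and $x,y\in A$), commutativity of left and right multiplications yields
\[
\mathrm{L}_{b}\mathrm{R}_{c}=(\mathrm{L}_{u}\mathrm{R}_{u'})(\mathrm{L}_{Tx}\mathrm{R}_{Ty})(\mathrm{L}_{v}\mathrm{R}_{v'}),
\]
so the middle factor is compact by property (i) of Lemma \ref{L4}, and hence $\mathrm{L}_{b}\mathrm{R}_{c}$ is compact. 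Bilinearity of $(b,c)\mapsto\mathrm{L}_{b}\mathrm{R}_{c}$, together with closedness of the space of compact operators on $A$ and the estimate $\|\mathrm{L}_{b}\mathrm{R}_{c}\|\le\|b\|\|c\|$, extends compactness to all $b,c\in J$. Thus $J$ is a non-zero closed bicompact ideal of quasinilpotent elements, i.e., a $qb$-ideal.

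The main obstacle is the bicompactness step just described: one has to rewrite the products of multiplication operators so that property (i) of Lemma \ref{L4} can be invoked on the inner factor $\mathrm{L}_{Tx}\mathrm{R}_{Ty}$, and then ensure that the passage from generators to the norm closure $J$ is compatible with compactness under linear combinations and norm limits. Everything else is bookkeeping on top of Lemmas \ref{pass} and \ref{L4}.
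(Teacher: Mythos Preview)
Your proof is correct and follows essentially the same route as the paper's. The paper's one-line argument (``every ideal $J$ generated by $Tx\in\mathcal{A}(M)$ is a $qb$-ideal'') tacitly relies on the facts established inside the proof of Lemma~\ref{L4}, namely that $\overline{\mathcal{S}_{+}(M)(Tx)\mathcal{S}_{+}(M)}$ consists of mutually compact quasinilpotent elements, so its closed span is a bicompact ideal contained in $\operatorname*{Rad}(A)$. You instead take the ideal generated by all of $T(A)$ and verify bicompactness directly from the \emph{statement} of Lemma~\ref{L4}(i) via the factorization $\mathrm{L}_{u(Tx)v}\mathrm{R}_{u'(Ty)v'}=(\mathrm{L}_{u}\mathrm{R}_{u'})(\mathrm{L}_{Tx}\mathrm{R}_{Ty})(\mathrm{L}_{v}\mathrm{R}_{v'})$; this is the same mechanism, made explicit. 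One small wording issue: your parenthetical about the quantities ``not growing when the ambient algebra is enlarged'' has the direction slightly garbled (what matters is that $\rho^{\chi}(M)$ does not increase when one \emph{shrinks} to $\mathcal{A}(M)$, so the hypothesis $\rho^{\chi}(M)<1$ survives), but since the paper has already reduced to $A=\mathcal{A}(M)$ before the corollary this does not affect the argument.
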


\begin{proof}
Indeed, every ideal $J$ generated by $Tx\in{\mathcal{A}(M)}$ is a $qb$-ideal.
\end{proof}

\begin{lemma}
\label{L5} If $\mathcal{A}{(M)}$ has no non-zero $qb$-ideals then the equality
\emph{(\ref{e2})} holds.
\end{lemma}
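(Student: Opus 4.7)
The plan is to argue by contradiction: supposing $\rho(M)>\max\{\rho^{\chi}(M),r(M)\}$, I construct a non-zero $qb$-ideal in $\mathcal{A}(M)$, contradicting the standing hypothesis. The key maneuver is to rescale $M$ so as to enter the setting of Corollary~\ref{imcor} while preserving the generated algebra. The case $\rho(M)=0$ is trivial, so assume $\rho(M)>0$. For $\epsilon>0$ put $M_{\epsilon}=(1+\epsilon)M$; then $\mathcal{A}(M_{\epsilon})=\mathcal{A}(M)$, and the three quantities $\rho,\rho^{\chi},r$ all scale by the factor $(1+\epsilon)$, so the strict inequality $\max\{\rho^{\chi}(M_{\epsilon}),r(M_{\epsilon})\}<\rho(M_{\epsilon})$ is preserved.

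The effect of the rescaling on the semigroup $\mathcal{S}(\mathrm{L}_{M_{\epsilon}}\mathrm{R}_{M_{\epsilon}})$ is decisive. Writing $N=\mathrm{L}_{M}\mathrm{R}_{M}$, Lemma~\ref{pass} gives $(\mathrm{L}_{M_{\epsilon}}\mathrm{R}_{M_{\epsilon}})^{n}=(1+\epsilon)^{2n}N^{n}$, and combined with $\|N^{n}\|\geq\rho(N)^{n}=\rho(M)^{2n}$ this yields $\|(\mathrm{L}_{M_{\epsilon}}\mathrm{R}_{M_{\epsilon}})^{n}\|\geq((1+\epsilon)\rho(M))^{2n}\to\infty$ as soon as $(1+\epsilon)\rho(M)>1$, which I arrange by choosing $\epsilon$ large enough. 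For such $\epsilon$ all the conditions needed by Lemma~\ref{L4}—the strict inequalities $\rho_{\chi}<\rho$ and $r<\rho$, together with unboundedness of the multiplication semigroup—hold for $M_{\epsilon}$.

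Although Lemma~\ref{L4} is stated with the normalization $\rho(N)=1$, its proof extends to any $\rho(N)>0$; the only sensitive step is the estimate $\lambda_{k_{n}}\rho(ua_{n}xb_{n}v)\to 0$. In the rescaled setting the leading property gives $\|S_{k_{n}}\|\geq\rho(M_{\epsilon})^{2k_{n}}$, while the definition of $r(M_{\epsilon})$ yields $\rho(ua_{n}xb_{n}v)\leq C'(r(M_{\epsilon})+\delta)^{2k_{n}}$ for any small $\delta>0$ (with $C'$ absorbing the fixed contributions from $u,x,v$), so
\[
\lambda_{k_{n}}\rho(ua_{n}xb_{n}v)\leq C'\left(\frac{r(M_{\epsilon})+\delta}{\rho(M_{\epsilon})}\right)^{2k_{n}}\longrightarrow 0,
\]
the decay being ensured by $r(M_{\epsilon})<\rho(M_{\epsilon})$. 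Thus Lemma~\ref{L4} produces a non-zero compact $T\in\overline{\mathcal{S}_{+}(\mathrm{L}_{M_{\epsilon}}\mathrm{R}_{M_{\epsilon}})}$ with $T\mathcal{A}(M)\subset\operatorname*{Rad}(\mathcal{A}(M))$ and $\mathrm{L}_{Th}\mathrm{R}_{Tg}$ compact for all $h,g\in\mathcal{A}(M)$. As in the proof of Corollary~\ref{imcor}, the ideal of $\mathcal{A}(M)=\mathcal{A}(M_{\epsilon})$ generated by any non-zero $Tx$ (with $x\in M_{\epsilon}$) is then a $qb$-ideal, furnishing the required contradiction.

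The main obstacle is precisely this extension of Lemma~\ref{L4} to $\rho(N)\neq 1$: the normalization was merely a convenience, and the genuine ingredients of the proof are the scale-invariant strict inequality $r(N)<\rho(N)$ together with the unboundedness of $\mathcal{S}(N)$—both of which I arrange by passing from $M$ to $M_{\epsilon}$.
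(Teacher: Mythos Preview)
Your route diverges from the paper's. The paper normalizes to $\rho(M)=1$ and then splits into two cases for the semigroup $\mathcal{S}(N)$ with $N=\mathrm{L}_{M}\mathrm{R}_{M}$: if it is \emph{bounded}, \cite[Proposition~9.6]{ShT2000} gives $\max\{\rho_{\chi}(N),r(N)\}=1$, which via Lemma~\ref{pass} forces $\max\{\rho^{\chi}(M),r(M)\}=1$, a contradiction; if it is \emph{unbounded}, Corollary~\ref{imcor} applies verbatim. You instead rescale so that $\rho(M_{\epsilon})>1$, thereby forcing $\mathcal{S}(N_{\epsilon})$ to be unbounded, and then invoke an extension of Lemma~\ref{L4}.

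There is a gap in that extension. Lemma~\ref{L4} begins by invoking Lemma~\ref{L3} to produce the compact limit operator $T$, and Lemma~\ref{L3} also carries the hypothesis $\rho(N)=1$. You analyze only the part~(ii) estimate and declare it ``the only sensitive step'', but the appeal to Lemma~\ref{L3} is equally sensitive: its real content is the \emph{existence} of a convergent subsequence of the normalized leading sequence, and this is the substance of \cite[Theorem~6.10]{ShT2000}, not a mere normalization. If one attempts the obvious repair---dividing $N_{\epsilon}$ by its spectral radius to put Lemma~\ref{L3} in its stated form---one computes $N_{\epsilon}/\rho(N_{\epsilon})=N/\rho(N)$, so the unboundedness hypothesis becomes precisely the condition on the \emph{original} (normalized) semigroup that you were trying to sidestep; the rescaling is circular. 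In short, your argument does not actually eliminate the bounded-semigroup case, and it is exactly that case which the paper handles separately via \cite[Proposition~9.6]{ShT2000}.
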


\begin{proof}
Suppose that (\ref{e2}) fails. We may assume that
\[
\rho(M)=1.
\]
Let $N=\mathrm{L}_{M}\mathrm{R}_{M}$. Then we have that%
\[
\rho(N)=1
\]
by Lemma \ref{pass}.

If the semigroup $\mathcal{S}(N)$ is bounded, then
\[
\max\left\{  \rho_{\chi}(N),r\left(  N\right)  \right\}  =1
\]
holds by \cite[Proposition 9.6]{ShT2000}. If $\rho_{\chi}(N)=1$ then
$\rho^{\chi}(M)=1$. Otherwise $r(N)=1$ and $r(M)=1$ by Lemma \ref{pass}. In
both of the cases (\ref{e2}) holds, a contradiction.

So $\mathcal{S}(N)$ is unbounded. Then $\mathcal{A}\left(  M\right)  $ has a
non-zero $qb$-ideal by Corollary \ref{imcor}. This contradicts to our assumptions.
\end{proof}

\begin{theorem}
\label{rho"} Let $A$ be a Banach algebra. The equality \emph{(\ref{e2})} holds
for each precompact subset $M$ of $A$.
\end{theorem}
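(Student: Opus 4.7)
The plan is to reduce the theorem to Lemma \ref{L5} via the radical $\mathcal{R}_{\mathrm{cq}}$. As already noted in the text preceding Lemma \ref{L3}, we may assume $A=\mathcal{A}(M)$: the values $\rho(M)$ and $r(M)$ are unchanged upon restriction to the closed subalgebra generated by $M$, and $\rho^{\chi}(M)$ can only decrease.

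First I would pass to the quotient $B=A/\mathcal{R}_{\mathrm{cq}}(A)$ via $q=q_{\mathcal{R}_{\mathrm{cq}}(A)}$ and set $M'=q(M)$. Then $M'$ is precompact, and since $q$ is continuous and surjective with $A=\mathcal{A}(M)$, one has $B=\mathcal{A}(M')$. Moreover $\rho(M)=\rho(M')$ by Theorem \ref{iness}, while $r(M')\leq r(M)$ because spectral radii of individual elements do not grow under homomorphisms, and $\rho^{\chi}(M')\leq\rho^{\chi}(M)$ (the inequality noted right after Lemma \ref{ineq}, which rests on (\ref{restQuot})).

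The crucial step is to verify that $B$ has no non-zero $qb$-ideal, so that Lemma \ref{L5} applies to $M'$ inside $B=\mathcal{A}(M')$. Let $I$ be a $qb$-ideal of $B$. Being bicompact, $I$ is in particular hypocompact, so $I\subset\mathcal{R}_{\mathrm{hc}}(B)$. Since $I$ consists of quasinilpotent elements, a standard quasi-regularity argument (for $a\in I$ and $b\in B$, $ba\in I$ is quasinilpotent, so $1-ba$ is invertible) shows $I\subset\operatorname*{Rad}(B)$. Hence $I\subset(\operatorname*{Rad}\wedge\mathcal{R}_{\mathrm{hc}})(B)$, which by Theorem \ref{order} sits inside $\mathcal{R}_{\mathrm{cq}}(B)$. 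But $\mathcal{R}_{\mathrm{cq}}(B)=0$ by property (R1) of the radical $\mathcal{R}_{\mathrm{cq}}$, so $I=0$.

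Applying Lemma \ref{L5} to $M'\subset B$ yields $\rho(M')=\max\{\rho^{\chi}(M'),r(M')\}$, and chaining the inequalities,
\[
\rho(M)=\rho(M')=\max\{\rho^{\chi}(M'),r(M')\}\leq\max\{\rho^{\chi}(M),r(M)\}\leq\rho(M),
\]
using $r(M)\leq\rho(M)$ from the introduction and $\rho^{\chi}(M)\leq\rho(M)$ (which follows from Lemma \ref{pass}(i) together with $\rho_{\chi}\leq\rho$ since $\|T\|_{\chi}\leq\|T\|$). Equality then holds throughout, proving (\ref{e2}). The main obstacle I expect is the verification that a $qb$-ideal lands inside $\operatorname*{Rad}\wedge\mathcal{R}_{\mathrm{hc}}$; once this is noted, the machinery of Section~3---especially Theorems \ref{iness} and \ref{order}---does the work of lifting Lemma \ref{L5} from the ``$qb$-ideal-free'' situation to arbitrary Banach algebras.
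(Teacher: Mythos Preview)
Your proof is correct and follows essentially the same strategy as the paper's: reduce to Lemma \ref{L5} by passing to a quotient that has no $qb$-ideals, using Theorems \ref{iness} and \ref{order} together with the inequalities $r(M/J)\le r(M)$ and $\rho^{\chi}(M/J)\le\rho^{\chi}(M)$. The only cosmetic difference is that you quotient by $\mathcal{R}_{\mathrm{cq}}(A)$ and argue directly in $B$ via $\mathcal{R}_{\mathrm{cq}}(B)=0$, whereas the paper quotients by the smaller ideal $J=\operatorname{Rad}(A)\cap\mathcal{R}_{\mathrm{hc}}(A)$ and kills a putative $qb$-ideal by pulling back its preimage into $J$ using the extension property of radicals.
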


\begin{proof}
Recall that we may assume that $A=\mathcal{A}(M)$. Let $J=\operatorname*{Rad}%
(A)\cap\mathcal{R}_{\mathrm{hc}}(A)$. Since
\[
J\subset\mathcal{R}_{\mathrm{cq}}(A)
\]
by Theorem \ref{order},we obtain that
\[
\rho(M)=\rho(M/J).
\]
by Theorem \ref{iness}. Furthermore, the algebra $A/J$ has no $qb$-ideals.
Indeed, if $I$ is such an ideal, and $U$ is its preimage in $A$, then it is
evident that%
\[
U\subset\operatorname*{Rad}(A),
\]
and that
\[
U\subset\mathcal{R}_{\mathrm{hc}}(A)
\]
by the extension property of radicals. Hence
\[
U\subset J
\]
and, as a consequence,
\[
I=0.
\]
Taking into account that $A/J=\mathcal{A}\left(  M/J\right)  $, and applying
Lemma \ref{L5}, we have that
\[
\rho(M)=\rho(M/J)=\max\{\rho^{\chi}(M/J),r(M/J)\}\leq\max\{\rho^{\chi
}(M),r(M)\}.
\]
The converse inequality is evident.
\end{proof}

\subsection{ Operator GBWF}

Now we can prove (\ref{ggbwf}).

\begin{theorem}
\label{top} If $M\subset\mathcal{B}(\mathcal{X})$ is precompact then
\[
\rho(M)=\max\{\rho_{\chi}(M),r(M)\}.
\]

\end{theorem}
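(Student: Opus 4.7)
The plan is to reduce the operator GBWF to the mixed GBWF (Theorem~\ref{rho"}) applied to the ambient Banach algebra $A = \mathcal{B}(\mathcal{X})$, and then to use the norm inequality of Lemma~\ref{ineq} to pass from the auxiliary quantity $\rho^{\chi}(M) = \rho_{\chi}(\mathrm{L}_M \mathrm{R}_M)^{1/2}$ on $A$ back to the genuine Hausdorff radius $\rho_{\chi}(M)$ on $\mathcal{X}$.

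The easy inequality $\rho(M) \geq \max\{\rho_{\chi}(M), r(M)\}$ follows from the chain $\rho_{\chi}(M) \leq \rho_e(M) \leq \rho(M)$ noted in the introduction and from $r(M) \leq \rho(M)$. So only the reverse inequality has content. First, I apply Theorem~\ref{rho"} to the precompact set $M \subset A$ to get
\[
\rho(M) = \max\{\rho^{\chi}(M), r(M)\}.
\]
If the maximum is $r(M)$ there is nothing further to prove, so the remaining task is to control $\rho^{\chi}(M)$ by $\rho_{\chi}(M)$ and $\rho(M)$.

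Next I use Lemma~\ref{ineq}: applied to $M^n$ (which is again precompact) together with the identity $(\mathrm{L}_M \mathrm{R}_M)^n = \mathrm{L}_{M^n} \mathrm{R}_{M^n}$ from Lemma~\ref{pass}, it yields
\[
\|(\mathrm{L}_M \mathrm{R}_M)^n\|_{\chi} = \|\mathrm{L}_{M^n} \mathrm{R}_{M^n}\|_{\chi} \leq 16 \|M^n\|_{\chi}\,\|M^n\|.
\]
Taking $n$-th roots and passing to $\limsup$ I obtain
\[
\rho_{\chi}(\mathrm{L}_M \mathrm{R}_M) \leq \rho_{\chi}(M)\,\rho(M),
\]
hence $\rho^{\chi}(M) \leq (\rho_{\chi}(M)\,\rho(M))^{1/2}$.

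Now I finish by a short case analysis. If $\rho(M) = \rho^{\chi}(M)$, then
\[
\rho(M)^2 \leq \rho_{\chi}(M)\,\rho(M),
\]
so $\rho(M) \leq \rho_{\chi}(M) \leq \max\{\rho_{\chi}(M), r(M)\}$; if $\rho(M) = r(M)$, the bound is immediate. Either way $\rho(M) \leq \max\{\rho_{\chi}(M), r(M)\}$, completing the proof. The argument is essentially mechanical once Theorem~\ref{rho"} and Lemma~\ref{ineq} are in hand; the substantive work has already been done in establishing the mixed GBWF, so no serious new obstacle is expected here.
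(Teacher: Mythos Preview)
Your proof is correct and follows essentially the same route as the paper: both apply the mixed GBWF (Theorem~\ref{rho"}) and then use Lemma~\ref{ineq} on $M^n$ to bound $\rho^{\chi}(M)^2 \leq \rho_{\chi}(M)\rho(M)$, finishing with a short algebraic step. The only cosmetic difference is that the paper writes the final step as $\rho(M)\leq\max\{\rho_{\chi}(M)^{1/2}\rho(M)^{1/2},\,r(M)^{1/2}\rho(M)^{1/2}\}$ and factors out $\rho(M)^{1/2}$, whereas you do an explicit case split on which term achieves the maximum; the content is identical.
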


\begin{proof}
By Lemma \ref{ineq},
\[
\Vert\mathrm{L}_{M}\mathrm{R}_{M}\Vert_{\chi}\leq16\Vert M\Vert_{\chi}\Vert
M\Vert.
\]
Changing $M$ by $M^{n}$, taking $n$-th roots and limits as $n\rightarrow
\infty$, we obtain that%
\[
\rho^{\chi}\left(  M\right)  ^{2}=\rho_{\chi}(\mathrm{L}_{M}\mathrm{R}%
_{M})\leq\rho_{\chi}(M)\rho(M).
\]
Applying this in (\ref{e2}), we get that
\[
\rho(M)\leq\max\{\rho_{\chi}(M)^{1/2}\rho(M)^{1/2},r(M)\},
\]
whence
\[
\rho(M)\leq\max\{\rho_{\chi}(M)^{1/2}\rho(M)^{1/2},r(M)^{1/2}\rho(M)^{1/2}\}.
\]
It follows from this that
\[
\rho(M)^{1/2}\leq\left(  \max\{\rho_{\chi}(M),r(M)\}\right)  ^{1/2},
\]
and we are done, because the converse is evident.
\end{proof}

\subsection{Banach algebraic GBWF}

Our next aim is to prove for any Banach algebra $A$ and a precompact subset $M\subset A$, that
\begin{equation}
\rho(M)=\max\{\rho(M/\mathcal{R}_{hc}(A)),r(M)\}.\label{alg}%
\end{equation}
It will be more convenient for us to prove (\ref{alg}) in the following form:
\begin{equation}
\rho(M)=\max\{\rho(M/J),r(M)\}\label{id}%
\end{equation}
for any hypocompact ideal $J$ of $A$.

We will begin with the case that $J$ is bicompact.

\begin{lemma}
\label{L6} Let $J$ be a bicompact ideal of $A$. Then
\begin{equation}
{\rho}_{e}(\mathrm{L}_{M}\mathrm{R}_{M})\leq\rho(M/J)\rho(M).\label{roe}%
\end{equation}

\end{lemma}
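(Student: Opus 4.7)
The plan is to control the essential norm $\|(\mathrm{L}_M\mathrm{R}_M)^k\|_e = \sup_{a,b\in M^k}\|\mathrm{L}_a\mathrm{R}_b\|_e$ for each $k$, then take $k$-th roots. The key input from bicompactness of $J$ is that $\mathrm{L}_{j}\mathrm{R}_{j'}$ is a compact operator on $A$ whenever $j,j'\in J$; this will take care of a single ``pure $J$'' component after a suitable decomposition of each $\mathrm{L}_a\mathrm{R}_b$.

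Given $a,b\in M^k$ and $\varepsilon>0$, I would use the definition of the quotient norm to pick $j_a,j_b\in J$ with $\tilde a:=a-j_a$ and $\tilde b:=b-j_b$ satisfying $\|\tilde a\|,\|\tilde b\|\le\|M^k/J\|+\varepsilon$. Then $\|j_a\|\le\|a\|+\|\tilde a\|\le 2\|M^k\|+\varepsilon$, and similarly for $\|j_b\|$. Expanding
\[
\mathrm{L}_a\mathrm{R}_b=\mathrm{L}_{\tilde a}\mathrm{R}_{\tilde b}+\mathrm{L}_{\tilde a}\mathrm{R}_{j_b}+\mathrm{L}_{j_a}\mathrm{R}_{\tilde b}+\mathrm{L}_{j_a}\mathrm{R}_{j_b},
\]
the last term is compact by bicompactness of $J$ and contributes $0$ to the essential norm, while for the first three I dominate the essential norm by the ordinary operator norm to obtain
\[
\|\mathrm{L}_a\mathrm{R}_b\|_e\le(\|M^k/J\|+\varepsilon)^2+2(\|M^k/J\|+\varepsilon)(2\|M^k\|+\varepsilon).
\]

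Since this bound is uniform in $a,b\in M^k$, taking the supremum, sending $\varepsilon\to 0$, and using $\|M^k/J\|\le\|M^k\|$ yields $\|(\mathrm{L}_M\mathrm{R}_M)^k\|_e\le 5\|M^k/J\|\,\|M^k\|$. Since $\rho_e(\mathrm{L}_M\mathrm{R}_M)=\lim_k\|(\mathrm{L}_M\mathrm{R}_M)^k\|_e^{1/k}$, $\|M^k/J\|^{1/k}\to\rho(M/J)$, and $\|M^k\|^{1/k}\to\rho(M)$, taking $k$-th roots and letting $k\to\infty$ produces (\ref{roe}). The delicate point, and the main obstacle, is that the cross terms $\mathrm{L}_{\tilde a}\mathrm{R}_{j_b}$ and $\mathrm{L}_{j_a}\mathrm{R}_{\tilde b}$ are not compact in general --- bicompactness only kills the symmetric $J\times J$ piece --- so the argument has to survive keeping them as mere norm bounds. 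This succeeds precisely because the asymmetric estimate $\|\tilde a\|\,\|j_b\|\le(\|M^k/J\|+\varepsilon)(2\|M^k\|+\varepsilon)$ pairs one factor of size $\rho(M/J)^k$ with one of size $\rho(M)^k$, giving the correct product $\rho(M/J)\rho(M)$ in the limit rather than the trivial $\rho(M)^2$.
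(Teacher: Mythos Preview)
Your proof is correct and follows essentially the same approach as the paper's. The only cosmetic difference is that the paper uses the two-term telescoping $\mathrm{L}_a\mathrm{R}_b-\mathrm{L}_{j_a}\mathrm{R}_{j_b}=\mathrm{L}_{\tilde a}\mathrm{R}_b+\mathrm{L}_{j_a}\mathrm{R}_{\tilde b}$ (yielding the constant $3$ rather than your $5$), which is immaterial after taking $k$-th roots.
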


\begin{proof}
Let us prove first the inequality
\begin{equation}
\Vert\mathrm{L}_{M}\mathrm{R}_{M}\Vert\leq3\Vert M/J\Vert\Vert M\Vert
\label{norm}%
\end{equation}
Let $a,b\in M$, $\varepsilon>0$. Choose $u,v\in J$ such that
\[
\max\left\{  \Vert a-u\Vert,\Vert b-v\Vert\right\}  <\Vert M/J\Vert
+\varepsilon.
\]
In particular, we have that%
\[
\left\Vert u\right\Vert <\left\Vert a\right\Vert +\left\Vert a-u\right\Vert
\leq\left\Vert M\right\Vert +\Vert M/J\Vert+\varepsilon\leq2\left\Vert
M\right\Vert +\varepsilon.
\]
Then we obtain that
\begin{align*}
\left\Vert \mathrm{L}_{a}\mathrm{R}_{b}\right\Vert _{e} &  \leq\left\Vert
\mathrm{L}_{a}\mathrm{R}_{b}-\mathrm{L}_{u}\mathrm{R}_{v}\right\Vert
=\left\Vert \mathrm{L}_{a-u}\mathrm{R}_{b}+\mathrm{L}_{u}\mathrm{R}%
_{b-v}\right\Vert \\
&  \leq(\Vert M/J\Vert+\varepsilon)\Vert M\Vert+(2\Vert M\Vert+\varepsilon
)(\Vert M/J\Vert+\varepsilon)\\
&  \leq(\Vert M/J\Vert+\varepsilon)(3\Vert M\Vert+\varepsilon),
\end{align*}
and it remains to take $\varepsilon\rightarrow0$ and supremum over all $a,b\in
M$.

To obtain (\ref{roe}), change in (\ref{norm}) $M$ by $M^{n}$, take $n$-th
roots and $n\rightarrow\infty$.
\end{proof}

\begin{corollary}
\label{bi} The equality \emph{(\ref{id})} holds for each bicompact ideal $J$.
\end{corollary}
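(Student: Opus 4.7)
The plan is to chain together Theorem \ref{rho"} (the mixed GBWF) with Lemma \ref{L6}, exploiting the inequality $\rho_{\chi} \leq \rho_{e}$ already noted in the introduction. The point is that Lemma \ref{L6} bounds the essential joint spectral radius of $\mathrm{L}_{M}\mathrm{R}_{M}$ in terms of $\rho(M/J)\rho(M)$, and therefore also bounds $\rho_{\chi}(\mathrm{L}_{M}\mathrm{R}_{M}) = \rho^{\chi}(M)^{2}$, which is exactly the quantity appearing in the mixed GBWF.

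Concretely, I would argue as follows. First, the inequality $\max\{\rho(M/J), r(M)\} \leq \rho(M)$ is automatic since $q_{J}$ is a continuous homomorphism and $r(M) \leq \rho(M)$ always. For the reverse, apply Lemma \ref{L6} together with $\rho_{\chi} \leq \rho_{e}$ to obtain
\[
\rho^{\chi}(M)^{2} \;=\; \rho_{\chi}(\mathrm{L}_{M}\mathrm{R}_{M}) \;\leq\; \rho_{e}(\mathrm{L}_{M}\mathrm{R}_{M}) \;\leq\; \rho(M/J)\,\rho(M).
\]
Plugging this into Theorem \ref{rho"} yields
\[
\rho(M) \;=\; \max\{\rho^{\chi}(M),\, r(M)\} \;\leq\; \max\bigl\{\bigl(\rho(M/J)\rho(M)\bigr)^{1/2},\, r(M)\bigr\}.
\]
Now split into two cases according to which term realizes the maximum. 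In the first case, squaring gives $\rho(M) \leq \rho(M/J)$; in the second, $\rho(M) \leq r(M)$. In either case, $\rho(M) \leq \max\{\rho(M/J), r(M)\}$, closing the loop.

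I do not anticipate any genuine obstacle: Lemma \ref{L6} is exactly the right multiplicative bound for the essential norm, Theorem \ref{rho"} is already available, and the passage from the essential to the Hausdorff version is free because the Hausdorff measure of non-compactness is dominated by the essential norm. The only small point requiring care is that the mixed GBWF is stated with $\rho^{\chi}$ rather than $\rho_{e}$, but once one notes $\rho^{\chi}(M)^{2} = \rho_{\chi}(\mathrm{L}_{M}\mathrm{R}_{M})$ and $\rho_{\chi} \leq \rho_{e}$, the substitution is immediate.
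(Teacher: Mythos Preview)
Your proposal is correct and follows essentially the same route as the paper: combine Lemma \ref{L6} with the inequality $\rho_{\chi}\le\rho_{e}$ to bound $\rho^{\chi}(M)^{2}$, then feed this into the mixed GBWF (Theorem \ref{rho"}) and finish with elementary algebra. The paper's only cosmetic difference is that instead of a case split it writes $r(M)\le r(M)^{1/2}\rho(M)^{1/2}$ and factors out $\rho(M)^{1/2}$ from the maximum, which avoids separately noting that the case $\rho(M)=0$ is trivial.
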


\begin{proof}
It follows from (\ref{roe}) that
\[
\rho^{\chi}(M)\leq{\rho}_{e}(\mathrm{L}_{M}\mathrm{R}_{M})^{1/2}\leq
\rho(M/J)^{1/2}\rho(M)^{1/2},
\]
whence by (\ref{e2}),
\begin{align*}
\rho(M)  & =\max\left\{  \rho^{\chi}(M),r\left(  M\right)  \right\}  \\
& \leq\max\{\rho(M/J)^{1/2}\rho(M)^{1/2},r(M)^{1/2}\rho(M)^{1/2}\}
\end{align*}
and (\ref{id}) follows immediately.
\end{proof}

\begin{lemma}
\label{step} Let $I,K$ be ideals of $A$ and $I\subset K$. If $K/I$ is
bicompact and \emph{(\ref{id})} holds for $J=I$ then it holds for $J=K$.
\end{lemma}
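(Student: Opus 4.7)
The plan is to reduce everything to Corollary \ref{bi} applied to the quotient algebra $A/I$, using the fact that $K/I$ is a bicompact ideal there.

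First, I would observe that since $I \subset K$ and $K/I$ is bicompact, the ideal $K/I$ is a bicompact ideal of the Banach algebra $A/I$. Therefore Corollary \ref{bi} is applicable to the algebra $A/I$ with the bicompact ideal $K/I$ and the precompact set $q_I(M) = M/I$. This yields
\[
\rho(M/I) = \max\{\rho((M/I)/(K/I)), r(M/I)\} = \max\{\rho(M/K), r(M/I)\},
\]
where I used the standard identification $(A/I)/(K/I) \cong A/K$.

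Next I would invoke two trivial monotonicity facts for quotients. For any continuous (surjective) homomorphism $q$ and any $a$, one has $\rho(q(a)) \leq \rho(a)$; taking suprema over $M^n$ and then $n$-th root $\limsup$ gives $r(M/I) \leq r(M)$. Substituting this into the displayed identity produces
\[
\rho(M/I) \leq \max\{\rho(M/K), r(M)\}.
\]
Now I would apply the hypothesis that (\ref{id}) holds for $J = I$, namely $\rho(M) = \max\{\rho(M/I), r(M)\}$, and combine with the previous inequality to conclude
\[
\rho(M) \leq \max\{\rho(M/K), r(M)\}.
\]

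Finally, the converse inequality is immediate: $\rho(M/K) \leq \rho(M)$ (quotient maps are norm-decreasing, hence do not increase the joint spectral radius) and $r(M) \leq \rho(M)$ by definition. Thus (\ref{id}) holds for $J = K$. There is no real obstacle here; the only thing to watch is the seamless identification of the quotient $(A/I)/(K/I)$ with $A/K$ and the fact that a bicompact ideal in $A$ that becomes bicompact after quotienting by a subideal remains bicompact, which follows from the definition (compactness of $\mathrm{L}_a\mathrm{R}_b$ descends to quotients via Lemma \ref{lhc1}).
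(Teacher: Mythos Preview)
Your proof is correct and follows essentially the same route as the paper's own argument: apply Corollary~\ref{bi} in the quotient $A/I$ to the bicompact ideal $K/I$, use the identification $(A/I)/(K/I)\cong A/K$ together with $r(M/I)\leq r(M)$, and combine with the hypothesis \eqref{id} for $J=I$. Your closing remark about bicompactness ``descending to quotients'' is unnecessary here, since $K/I$ being bicompact is already the stated hypothesis, not something to be verified.
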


\begin{proof}
The isomorphism $A/K\rightarrow(A/I)/(K/I)$ implies
\[
\rho(M/K)=\rho((M/I)/(K/I)),
\]
whence
\begin{align*}
\rho(M) &  =\max\{\rho(M/I),r(M)\}=\max\{\max\{\rho
((M/I)/(K/I)),r(M/I)\},r(M)\}\\
&  \leq\max\{\rho(M/K),r(M)\}.
\end{align*}
The converse inequality is trivial.
\end{proof}

\begin{lemma}
\label{cont} If $J=\overline{\bigcup J_{\alpha}}$, where $(J_{\alpha})$ is an
increasing net of closed ideals of a Banach algebra $A$, then, for a
precompact subset $M\subset A$,%
\begin{equation}
\label{econt1}\Vert M/J\Vert=\lim_{\alpha}\Vert M/J_{\alpha}\Vert=\inf
_{\alpha}\Vert M/J_{\alpha}\Vert
\end{equation}
and
\begin{equation}
\rho(M/J)=\lim_{\alpha}\rho(M/J_{\alpha})=\inf_{\alpha}\rho(M/J_{\alpha}).
\label{econt}%
\end{equation}

\end{lemma}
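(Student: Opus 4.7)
The plan is to prove the two displayed equalities separately, noting that the monotonicity of the net immediately gives $\lim_\alpha = \inf_\alpha$ in both cases, so the real content is the equality $\|M/J\| = \inf_\alpha \|M/J_\alpha\|$ (and likewise for $\rho$). Since each inclusion $J_\alpha \subset J$ gives the trivial inequalities $\|M/J\| \le \|M/J_\alpha\|$ and $\rho(M/J) \le \rho(M/J_\alpha)$, in each case only one direction requires work.

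For (\ref{econt1}), I would first handle the pointwise statement: for each fixed $a \in A$, since $\bigcup_\alpha J_\alpha$ is dense in $J$, one has $\mathrm{dist}(a, J) = \inf_\alpha \mathrm{dist}(a, J_\alpha)$, i.e.\ $\|a/J\| = \inf_\alpha \|a/J_\alpha\|$. The main obstacle is then to upgrade this to uniformity over $M$, and this is exactly where precompactness enters. Given $\varepsilon > 0$, I choose a finite $\varepsilon$-net $\{a_1,\dots,a_n\} \subset M$ for $M$. For each $i$, choose $\alpha_i$ with $\|a_i/J_{\alpha_i}\| < \|a_i/J\| + \varepsilon$; using that $(J_\alpha)$ is an increasing net (hence directed), pick $\alpha_0$ above all $\alpha_i$. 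For any $a \in M$, choose $a_i$ with $\|a - a_i\| < \varepsilon$; then monotonicity of the quotient norms together with the triangle inequality yields
\[
\|a/J_{\alpha_0}\| \le \|a_i/J_{\alpha_0}\| + \varepsilon \le \|a_i/J_{\alpha_i}\| + \varepsilon \le \|a/J\| + 3\varepsilon \le \|M/J\| + 3\varepsilon.
\]
Taking the supremum over $a \in M$ and then $\varepsilon \to 0$ gives $\inf_\alpha \|M/J_\alpha\| \le \|M/J\|$.

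For (\ref{econt}), I would use the Rota--Strang formula $\rho(N) = \inf_n \|N^n\|^{1/n}$ applied in $A/J$. Given $\varepsilon > 0$, pick $n$ so that $\|M^n/J\|^{1/n} < \rho(M/J) + \varepsilon$. The crucial observation is that $M^n$ is precompact whenever $M$ is, since multiplication is continuous and $\overline{M}^n$ is the continuous image of the compact set $\overline{M}\times\cdots\times\overline{M}$. Hence (\ref{econt1}), already established, applies to $M^n$ and furnishes $\alpha$ with $\|M^n/J_\alpha\|^{1/n} < \rho(M/J) + \varepsilon$. Then
\[
\rho(M/J_\alpha) \le \|M^n/J_\alpha\|^{1/n} < \rho(M/J) + \varepsilon,
\]
so $\inf_\alpha \rho(M/J_\alpha) \le \rho(M/J)$, completing the proof.

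The only genuinely delicate point is the uniformity argument in the first equality; once (\ref{econt1}) is known, the passage to $\rho$ is essentially formal, provided one remembers that $M^n$ inherits precompactness from $M$.
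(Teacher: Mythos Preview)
Your proof is correct and follows essentially the same approach as the paper's: both establish the pointwise statement $\|a/J\|=\inf_\alpha\|a/J_\alpha\|$, upgrade it to uniformity over $M$ via a finite $\varepsilon$-net and directedness of $(J_\alpha)$, and then pass to $\rho$ by applying the norm equality to $M^n$ combined with the Rota--Strang formula. Your write-up is in fact slightly more explicit, since you spell out why $M^n$ is again precompact, a point the paper leaves implicit.
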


\begin{proof}
We have
\[
\left\Vert M/J\right\Vert \leq\left\Vert M/J_{\alpha}\right\Vert
\]
for every $\alpha$, whence
\[
\rho(M/J)=\inf_{n}\left\Vert M^{n}/J\right\Vert ^{1/n}\leq\inf_{\alpha}%
\inf_{n}\left\Vert M^{n}/J_{\alpha}\right\Vert ^{1/n}=\inf_{\alpha}%
\rho(M/J_{\alpha}).
\]
and also%
\[
\left\Vert M/J\right\Vert \leq\inf_{\alpha}\left\Vert M/J_{\alpha}\right\Vert
\leq\underset{\alpha}{\,\lim\inf}\left\Vert M/J_{\alpha}\right\Vert .
\]

On the other hand, it is easy to see that for any $a\in M$ and $\varepsilon>0$
there is $\alpha=\alpha(a,\varepsilon)$ with
\begin{equation}
\left\Vert a/J_{\alpha}\right\Vert \leq\left\Vert a/J\right\Vert
+\varepsilon\leq\left\Vert M/J\right\Vert +\varepsilon.\label{econt2}%
\end{equation}
Take a finite subset $N$ of $M$ with $\operatorname*{dist}(b,N)\leq
\varepsilon$ for every $b\in M$. It is clear that, for every $c\in A$,
\[
\left\Vert c/J_{\beta}\right\Vert \leq\left\Vert c/J_{\alpha}\right\Vert
\]
if $\alpha<\beta$. So, choosing $\gamma\geq\max\left\{  \alpha(a,\varepsilon
):a\in N\right\}  $, we obtain from (\ref{econt2}) that
\[
\operatorname*{dist}(b/J_{\gamma},N/J_{\gamma})\leq\operatorname*{dist}%
(b,N)\leq\varepsilon
\]
for every $b\in M$, and so
\[
\left\Vert M/J_{\gamma}\right\Vert \leq\left\Vert N/J_{\gamma}\right\Vert
+\varepsilon\leq\left\Vert M/J\right\Vert +2\varepsilon.
\]
Therefore
\begin{equation}
\inf_{\alpha}\left\Vert M/J_{\alpha}\right\Vert \leq\underset{\alpha}%
{\,\lim\sup\,}\left\Vert M/J_{\alpha}\right\Vert \leq\left\Vert M/J\right\Vert
,\label{econt3}%
\end{equation}
whence (\ref{econt1}) holds. Take $n\in\mathbb{N}$ such that%
\[
\left\Vert M^{n}/J\right\Vert ^{1/n}\leq\rho(M/J)+\varepsilon.
\]
Then, by (\ref{econt3}) applied to $M^{n}$,
\begin{align*}
\inf_{\alpha}\rho(M/J_{\alpha}) &  \leq\lim\sup\,\,\rho(M/J_{\alpha}%
)\leq\underset{\alpha}{\,\lim\sup\,}\Vert M^{n}/J_{\alpha}\Vert^{1/n}%
\leq\left\Vert M^{n}/J\right\Vert ^{1/n}\\
&  \leq\rho(M/J)+\varepsilon.
\end{align*}
Therefore (\ref{econt}) holds.
\end{proof}

Now we can finish the proof of (\ref{id}).

\begin{theorem}
The equality \emph{(\ref{id})}\textrm{ }holds for every hypocompact ideal $J$.
\end{theorem}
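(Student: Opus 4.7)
The plan is a transfinite induction along the chain supplied by Proposition \ref{phc1}. Since $J$ is hypocompact, there is a transfinite increasing chain of ideals $(J_{\alpha})_{\alpha \leq \gamma}$ in $A$ with $J_1 = 0$, $J_\gamma = J$, and each successor quotient $J_{\alpha+1}/J_\alpha$ bicompact. I will show by transfinite induction on $\alpha \leq \gamma$ that
\[
\rho(M) = \max\{\rho(M/J_\alpha), r(M)\};
\]
taking $\alpha = \gamma$ then yields (\ref{id}).

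The base case $\alpha = 1$ is immediate since $J_1 = 0$ reduces the claim to $\rho(M) = \max\{\rho(M), r(M)\}$, which holds because $r(M) \leq \rho(M)$. For a successor ordinal $\alpha + 1$, the induction hypothesis gives the formula for $J_\alpha$; since $J_{\alpha+1}/J_\alpha$ is bicompact, Lemma \ref{step} applied with $I = J_\alpha$ and $K = J_{\alpha+1}$ delivers the formula for $J_{\alpha+1}$. (The bicompact successor case itself was already handled by Corollary \ref{bi}, which is what makes Lemma \ref{step} usable at each step.)

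The only step requiring any real thought is the limit case, and this is where Lemma \ref{cont} does the work. Let $\beta \leq \gamma$ be a limit ordinal and suppose the formula holds at every $\alpha < \beta$. Since $J_\alpha \subset J_\beta$ implies $\rho(M/J_\beta) \leq \rho(M/J_\alpha)$, the inequality $\max\{\rho(M/J_\beta), r(M)\} \leq \rho(M)$ is automatic. For the reverse inequality, $(J_\alpha)_{\alpha < \beta}$ is an increasing net of closed ideals with $J_\beta = \overline{\bigcup_{\alpha < \beta} J_\alpha}$ by the definition of a transfinite increasing chain, so Lemma \ref{cont} gives $\rho(M/J_\beta) = \inf_{\alpha < \beta} \rho(M/J_\alpha)$. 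Combined with the induction hypothesis $\rho(M) = \max\{\rho(M/J_\alpha), r(M)\}$ for each $\alpha < \beta$ and the elementary identity $\inf_\alpha \max\{x_\alpha, c\} = \max\{\inf_\alpha x_\alpha, c\}$ (valid for any decreasing net $x_\alpha$ and constant $c$), we obtain
\[
\rho(M) = \inf_{\alpha < \beta} \max\{\rho(M/J_\alpha), r(M)\} = \max\{\rho(M/J_\beta), r(M)\},
\]
completing the induction. No genuine obstacle is anticipated: the machinery in Corollary \ref{bi}, Lemma \ref{step}, and Lemma \ref{cont} has been assembled precisely for this transfinite argument, with the bicompact-quotient structure from Proposition \ref{phc1} providing the scaffolding.
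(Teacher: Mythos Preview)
Your proof is correct and follows essentially the same approach as the paper: transfinite induction along the chain from Proposition~\ref{phc1}, handling successor ordinals via Lemma~\ref{step} and limit ordinals via Lemma~\ref{cont}. The paper phrases it as a least-counterexample argument rather than a direct induction, and leaves the limit-ordinal computation implicit, but the substance is identical.
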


\begin{proof}
Indeed, there is a transfinite chain $\{J_{\alpha}\}_{\alpha\leq\beta}$ of
closed ideals such that $J_{0}=0$, $J_{\beta}=J$, and all $J_{\alpha
+1}/J_{\alpha}$ are bicompact. Suppose that $\gamma$ is the least $\alpha$,
for which (\ref{id}) fails. It cannot be a limit ordinal because of Lemma
\ref{cont} and cannot have a predecessor because of Lemma \ref{step}.
Therefore (\ref{id}) holds for all $\alpha$.
\end{proof}

\subsection{Applications to continuity of the joint spectral radius}

Since the operator GBWF is now proved in full generality we may remove the
restriction of weak compactness in the applications to the continuity of joint
spectral radius which were obtained in \cite[Corollary 4.6]{ShT2002}.

Recall \cite[Proposition 3.1]{ShT2000} that \textit{the joint spectral radius
is an upper semicontinuous function of a bounded subset} $M$ \textit{of a
Banach algebra}. This means that
\[
\underset{n\rightarrow\infty}{\limsup\,\,}\rho(M_{n})\leq\rho(M)
\]
if a sequence $M_{n}$ of bounded subsets tends to $M$ in the sense that the
Hausdorff distance between $M_{n}$ and $M$ tends to zero.

\begin{quotation}
{\small Indeed, we have that }%
\[
M_{n}^{m}\rightarrow M^{m}%
\]
{\small as }$n\rightarrow\infty$ {\small  for every }$m${\small , whence }%
\[
\Vert M_{n}^{m}\Vert^{1/m}\rightarrow\Vert M^{m}\Vert^{1/m}%
\]
{\small as }$n\rightarrow\infty${\small . Since }$\rho(M_{n})=\rho(M_{n}%
^{m})^{1/m}\leq\Vert M_{n}^{m}\Vert^{1/m}${\small , we see that }%
\[
\underset{n\rightarrow\infty}{\limsup\,\,}\rho(M_{n})\leq\Vert M^{m}%
\Vert^{1/m}.
\]
{\small Taking }$m\rightarrow\infty${\small , we get that }%
\[
\underset{n\rightarrow\infty}{\limsup\,\,}\rho(M_{n})\leq\rho(M).
\]

\end{quotation}

We say that a set $M$ of elements in a Banach algebra $\mathcal{A}$ is a
\textit{point of continuity} for the joint spectral radius if $\rho
(M_{n})\rightarrow\rho(M)$ for any sequence $M_{n}$ of bounded sets tending to
$M$.

It is well known that if the norm of an operator $T$ is more than its
essential norm then $T$ is a point of continuity of the (usual) spectral
radius. The following result establishes the same for precompact families of operators.

\begin{corollary}
\label{appl} Let $M$ be a precompact set of operators on a Banach set
$\mathcal{X}$. If $\rho_{\chi}(M)<\rho(M)$ then $M$ is a point of continuity
of the joint spectral radius.
\end{corollary}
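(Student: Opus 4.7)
The plan is to invoke Theorem~\ref{top} to turn the hypothesis $\rho_\chi(M)<\rho(M)$ into the equality $\rho(M)=r(M)$, and then, since upper semicontinuity of $\rho$ is recalled just before the corollary, to reduce the whole thing to proving $\liminf_k\rho(M_k)\ge r(M)$ for any Hausdorff-convergent sequence $M_k\to M$. The key step will be to exhibit, for each small $\epsilon>0$, a single element $a$ of some power $M^n$ whose ordinary spectral radius $\rho(a)$ is nearly $r(M)^n$ and whose essential spectral radius $r_e(a):=\rho(\pi(a))$ is strictly smaller than $\rho(a)$; the single-operator continuity of $\rho$ mentioned in the remark preceding the corollary will then transport the lower bound from $M$ to $M_k$.

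Concretely, I would fix $\epsilon>0$ so small that $\rho_\chi(M)+\epsilon<r(M)-\epsilon$. By the $\limsup$ definition of $\rho_\chi$ there is $m_0$ with $\|M^m\|_\chi^{1/m}\le\rho_\chi(M)+\epsilon$ for every $m\ge m_0$, and by the $\limsup$ definition of $r$ there exist $n\ge m_0$ and $a\in M^n$ with $\rho(a)^{1/n}>r(M)-\epsilon$. Since $a^k\in M^{nk}$ for every $k\ge1$, this forces $\|a^k\|_\chi^{1/k}\le(\rho_\chi(M)+\epsilon)^n$, and Nussbaum's formula $r_e(a)=\lim_k\|a^k\|_\chi^{1/k}$ (standard for the Hausdorff measure of non-compactness) then gives
\[
r_e(a)\le(\rho_\chi(M)+\epsilon)^n<(r(M)-\epsilon)^n<\rho(a).
\]

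Once $r_e(a)<\rho(a)$ is in hand, the peripheral part of $\sigma(a)$ consists of isolated eigenvalues of finite algebraic multiplicity, and Newburgh's theorem makes $\rho$ continuous at the individual operator $a$. Hausdorff convergence $M_k\to M$ implies $M_k^n\to M^n$, so I can pick $a_k\in M_k^n$ with $a_k\to a$; continuity at $a$ combined with $\rho(M_k)^n=\rho(M_k^n)\ge\rho(a_k)$ from (\ref{pow1}) yields
\[
\rho(M_k)\ge\rho(a_k)^{1/n}\longrightarrow\rho(a)^{1/n}>r(M)-\epsilon,
\]
and letting $\epsilon\to0$ finishes the proof. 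The main obstacle is the appeal to two external facts that are standard but do not follow from the Banach-algebraic machinery developed in the paper, namely Nussbaum's formula for $r_e$ via $\chi$ and the single-operator continuity of $\rho$ under $r_e(a)<\rho(a)$; once they are granted, the remainder is straightforward bookkeeping with the $\limsup$ definitions of $r$ and $\rho_\chi$.
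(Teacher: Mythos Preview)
Your argument is correct and follows essentially the same route as the paper's proof: use Theorem~\ref{top} to get $\rho(M)=r(M)$, locate a single operator $a\in M^n$ with $\rho(a)^{1/n}$ close to $r(M)$, bound its essential spectral radius via $\|a^k\|_\chi\le\|M^{nk}\|_\chi$, and then invoke single-operator continuity of $\rho$ at $a$ together with $\rho(a_k)\le\rho(M_k^n)=\rho(M_k)^n$. The paper organizes this as a contradiction argument with a normalization $\rho(M)>1>\rho_\chi(M)$ in place of your direct $\epsilon$-bookkeeping, but the substance is identical; in particular the two external facts you flag (coincidence of $\rho_\chi$ and $\rho_e$ for a single operator, and continuity of $\rho$ at operators with $\rho_e(T)<\rho(T)$) are exactly the ones the paper also takes for granted.
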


\begin{proof}
Let $M_{n}$ tend to $M$. Since
\[
\underset{n\rightarrow\infty}{\limsup\,\,}\rho(M_{n})\leq\rho(M),
\]
we have only to prove that
\[
\underset{n\rightarrow\infty}{\liminf\,}\rho(M_{n})\geq\rho(M).
\]
Suppose the contrary. Multiplying by a scalar and changing $M_{n}$ by a
subsequence, we may assume that
\[
\rho(M_{n})\rightarrow\alpha<1<\rho(M)
\]
and
\[
\rho_{\chi}(M)<1.
\]

It follows from the formula (\ref{ggbwf}) that
\[
\rho(M)=r(M).
\]
Hence
\[
\sup\{\rho(T):T\in M^{k}\}>1
\]
for some $k$. This means that there is an operator $T\in M^{k}$ with
\[
\rho(T)>1.
\]
Note that
\[
\rho_{\chi}(T)\leq1.
\]
Indeed, since $\rho_{\chi}(M)<1$ then
\[
\Vert M^{n}\Vert_{\chi}<1
\]
for sufficiently large $n$. Hence
\[
\Vert T^{n}\Vert_{\chi}\leq\Vert M^{nk}\Vert_{\chi}<1
\]
and it remains to take the $n$-th roots.

Since for operators (one-element families) the numbers $\rho_{\chi}$ and
$\rho_{e}$ coincide, we conclude that
\[
\rho_{e}(T)<\rho(T).
\]
By our assumptions, there are $T_{n}\in M_{n}^{k}$ such that
\[
T_{n}\rightarrow T.
\]
Since $T$ is a point of continuity of the usual spectral radius,
\[
\rho(T_{n})\rightarrow\rho(T).
\]
But this is impossible because
\[
\rho(T_{n})\leq\rho(M_{n}^{k})=\rho(M_{n})^{k}\rightarrow{\alpha}^{k}<1.
\]

\end{proof}

\textit{Acknowledgement.} The results of the paper, as well as some other
results on topological radicals of Banach algebras, were announced in
\cite{ShT2001}. At about that time the authors realized that the theory of
topological radicals admits a systematic treatment in a much more general
setting, not only for Banach algebras, but for non-necessarily complete and
non-necessarily associative algebras. Also, the class of morphisms in such a
theory may be different. The development of this approach took a lot of time
and in the present moment is far from the end (only one paper of the planned
series, \cite{rad1}, is published). Thus the proof of the generalized
Berger-Wang formula was unpublished for several years. The aim of the present
publication is to make the proof available for specialists.

It should be noted that the list of generalized Berger-Wang formulae given
here is not complete. The stronger variants of these formulae, as well as an
exposition of some topics of topological radicals, will be published elsewhere.

\end{document}